\newtheorem{thm}{Theorem}[section]
\newtheorem{cor}[thm]{Corollary}
\newtheorem{lem}[thm]{Lemma}
\newtheorem{prop}[thm]{Proposition}
\newcommand{\enuma}[1]{\begin{enumerate}[\textup{(}a\textup{)}] {#1} \end{enumerate}}
\newcommand{\bW}{{\mathbf W}}
\newcommand{\bI}{{\mathbf I}}
\newcommand{\mr}{\mathrm}
\newcommand{\mc}{\mathcal}
\newcommand{\mf}{\mathfrak}
\newcommand{\N}{\mathbb N}
\newcommand{\Z}{\mathbb Z}
\newcommand{\Q}{\mathbb Q}
\newcommand{\R}{\mathbb R}
\newcommand{\C}{\mathbb C}
\newcommand{\vareps}{\epsilon}
\newcommand{\ochi}{\omega \otimes \chi}
\newcommand{\matje}[4]{\left(\begin{smallmatrix} #1 & #2 \\ 
#3 & #4 \end{smallmatrix}\right)}
\newcommand{\cent}{C}
\def\Hom{{\rm Hom}}
\def\ess{{\rm ess}}
\def\Irr{{\rm Irr}}
\def\Gal{{\rm Gal}}
\def\GL{{\rm GL}}
\def\Mat{{\rm M}}
\def\tr{{\rm tr}}
\def\PGL{{\rm PGL}}
\def\SL{{\rm SL}}
\def\JL{{\rm JL}}
\def\St{{\rm St}}
\def\rec{{\rm rec}}
\def\der{{\rm der}}
\def\op{{\rm op}}
\def\char{{\rm char}}
\def\Mod{{\rm Mod}}
\def\swan{{{\rm swan}}}
\begin{document}

\title{The local Langlands correspondence \\for inner forms of $\SL_n$}

\author[A.-M. Aubert]{Anne-Marie Aubert}
\address{Institut de Math\'ematiques de Jussieu -- Paris Rive Gauche, 
U.M.R. 7586 du C.N.R.S., U.P.M.C., 4 place Jussieu 75005 Paris, France}
\email{aubert@math.jussieu.fr}
\author[P. Baum]{Paul Baum}
\address{Mathematics Department, Pennsylvania State University,  University Park, PA 16802, USA}
\email{baum@math.psu.edu}
\author[R. Plymen]{Roger Plymen}
\address{School of Mathematics, Southampton University, Southampton SO17 1BJ,  England \emph{and} 
School of Mathematics, Manchester University, Manchester M13 9PL, England}
\email{r.j.plymen@soton.ac.uk \quad plymen@manchester.ac.uk}
\author[M. Solleveld]{Maarten Solleveld}
\address{Radboud Universiteit Nijmegen, Heyendaalseweg 135, 6525AJ Nijmegen, the Netherlands}
\email{m.solleveld@science.ru.nl}
\date{\today}
\subjclass[2010]{20G05, 22E50}
\keywords{representation theory, local Langlands conjecture, division algebra, close fields}
\thanks{The second author was partially supported by NSF grant DMS-1200475.}
\maketitle

\begin{abstract}   
Let $F$ be a non-archimedean local field. We establish the local Langlands correspondence 
for all inner forms of the group $\SL_n (F)$. It takes the form of a bijection 
between, on the 
one hand, conjugacy classes of Langlands parameters for $\SL_n (F)$ enhanced with an irreducible 
representation of an S-group and, on the other hand, the union of the spaces of irreducible 
admissible representations of all inner forms of $\SL_n (F)$ up to equivalence.
An analogous result is shown in the archimedean case.

To settle the case where $F$ has positive characteristic, we employ the method of close fields.
We prove that this method is compatible with the local Langlands correspondence for inner
forms of $\GL_n (F)$, when the fields are close enough compared to the depth of the representations.
\end{abstract}

\tableofcontents

\section{Introduction}

Let $F$ be a local field and let $D$ be a division algebra with centre $F$, of
dimension $d^2$ over $F$. Then $G = \GL_m (D)$ is an inner form of $\GL_{md}(F)$
and it is endowed with a reduced norm map Nrd$: \GL_m (D) \to F^\times$.
The group $G^\sharp := \ker (\text{Nrd} : G \to F^\times)$ is an inner form of
$\SL_n (F)$. In this paper we will complete the local Langlands correspondence
for $G^\sharp$. 

We sketch how it goes and which part of it is new. Let $\Irr (H)$ denote the set 
of (isomorphism classes of) irreducible admissible $H$-representations. If $H$
is a reductive group over a local field, we denote the collection of
equivalence classes of Langlands parameters for $H$ by $\Phi (H)$. The local Langlands
correspondence (LLC) for $\GL_n (F)$ was established in the important papers
\cite{Lan,LRS,HaTa,Hen2,Zel}. Together with the Jacquet--Langlands correspondence
this provides the LLC for inner forms $G = \GL_m (D)$ of $\GL_n (F)$, see 
\cite{HiSa,ABPS1}. For these groups every L-packet $\Pi_\phi (G)$ is a singleton 
and the LLC is a canonical bijective map
\begin{equation}\label{eq:1.2}
\mathrm{rec}_{D,m} : \Irr (\GL_m (D)) \to \Phi (\GL_m (D)) . 
\end{equation}
The LLC for inner forms of $\SL_n (F)$ is derived from the above, in
the sense that every L-packet for $G^\sharp$ consists of the irreducible
constituents of $\mathrm{Res}_{G^\sharp}^G (\Pi_\phi (G))$. Of course these
L-packets have more than one element in general. To parametrize the members of
$\Pi_\phi (G^\sharp)$ one must enhance the Langlands parameter $\phi$ with an
irreducible representation of a suitable component group. 
This idea originated for unipotent representations of $p$-adic reductive groups 
in \cite[\S~1.5]{LuSquare}. For $\SL_n (F) ,\; \phi$
is a map from the Weil--Deligne group of $F$ to $\PGL_n (\C)$ and a correct choice
is the group of components of the centralizer of $\phi$ in $\PGL_n (\C)$, see
\cite{GeKn}. In general a more subtle component group $\mc S_\phi$ is needed, see 
\cite{Vog,Art3}. 

Let $\Phi^e (G^\sharp)$ be the collection of equivalence classes 
$(\phi,\rho)$ of a Langlands parameter $\phi$ for $G^\sharp$, enhanced with
$\rho \in \Irr (\mc S_\phi)$. The LLC for $G^\sharp$ should be an injective map
\begin{equation}\label{eq:1.1}
\Irr (G^\sharp) \to \Phi^e (G^\sharp) , 
\end{equation}
which satisfies several naturality properties. The map will almost never be 
surjective, but for every $\phi$ which is relevant for $G^\sharp$ the image
should contain at least one pair $(\phi,\rho)$. This form of the LLC was proven
for generic representations of $G^\sharp$ in \cite{HiSa}, under the assumption
that the underlying local field has characteristic zero.

A remarkable aspect of Langlands' conjectures \cite{Vog} is that it is better 
to consider not just one reductive group at a time, but all inner forms of a 
given group simultaneously. Inner forms share the same
Langlands dual group, so in \eqref{eq:1.1} the right hand side is the same
for all inner forms $H$ of the given group. The hope is that one can turn 
\eqref{eq:1.1} into a bijection by defining a suitable equivalence relation on
the set of inner forms and taking the corresponding union of the sets $\Irr (H)$ 
on the left hand side. Such a statement was proven for unipotent representations
of simple $p$-adic groups in \cite{Lus}.

Let us make this explicit for inner forms of $\GL_n (F)$, respectively $\SL_n (F)$.
We define the equivalence classes of such inner forms to be in bijection with the
isomorphism classes of central simple $F$-algebras of dimension $n^2$ via 
$\Mat_m (D) \mapsto \GL_m (D)$, respectively $\Mat_m (D) \mapsto \GL_m (D)_{\der}$.

As Langlands dual group we take $\GL_n (\C)$, respectively $\PGL_n (\C)$. To deal
with inner forms it is advantageous to consider the conjugation action of $\SL_n (\C)$ 
on these two groups. It induces a natural action of $\SL_n (\C)$
on the collection of Langlands parameters for $\GL_n (F)$ or $\SL_n (F)$.
For any such parameter $\phi$ we can define
\begin{equation}\label{eq:1.3}
\begin{aligned}
& C(\phi) = Z_{\SL_n (\C)}(\text{im } \phi) , \\
& \mc S_\phi = C(\phi) / C(\phi)^\circ , \\
& \mc Z_\phi = Z (\SL_n (\C)) / Z (\SL_n (\C)) \cap C(\phi)^\circ \cong
Z (\SL_n (\C)) C(\phi)^\circ / C(\phi)^\circ .
\end{aligned}
\end{equation}
Notice that the centralizers are taken in $\SL_n (\C)$ and not in the Langlands dual
group. The groups in \eqref{eq:1.3} are related to the more usual component group
$S_\phi := Z_{\PGL_n (\C)}(\text{im } \phi) / Z_{\PGL_n (\C)}(\text{im } \phi)^\circ$
by the short exact sequence
\[
1 \to \mc Z_\phi \to \mc S_\phi \to S_\phi \to 1.
\]
Hence $\mc S_\phi$ has more irreducible representations than $S_\phi$. Via the Langlands
correspondence the additional ones are associated to irreducible representations of
non-split inner forms of $\GL_n (F)$ or $\SL_n (F)$.

For example, consider a Langlands parameter $\phi$ for $\GL_2 (F)$ which is 
elliptic, that is, whose image is not contained in any torus of $\GL_2 (\C)$. 
Then $S_\phi = 1$
but $\mc S_\phi = Z(\SL_2 (\C)) \cong \{\pm 1\}$. The pair $(\phi,\mathrm{triv}_{\mc S_\phi})$
parametrizes an essentially square-integrable representation of $\GL_2 (F)$ and 
$(\phi,\mathrm{sgn}_{\mc S_\phi})$ parametrizes an irreducible representation of the inner 
form $D^\times$, where $D$ denotes a noncommutative division algebra of dimension $4$ over $F$.

For general linear groups over local fields we prove a result which was already known 
to experts, but which we could not find in the literature:

\begin{thm}\label{thm:1.1} (see Theorem \ref{thm:5.2}) \\
There is a canonical bijection between:
\begin{itemize}
\item pairs $(G,\pi)$ with $\pi \in \Irr (G)$ and $G$ an inner form of $\GL_n (F)$,
considered up to equivalence;
\item $\GL_n (\C)$-conjugacy classes of pairs $(\phi,\rho)$ with $\phi \in \Phi (\GL_n (F))$
and $\rho \in \Irr (\mc S_\phi)$. 
\end{itemize}
\end{thm}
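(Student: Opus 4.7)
The plan is to patch together the established LLC bijections $\rec_{D,m}$ of \eqref{eq:1.2} across all inner forms $G = \GL_m(D)$, using an irreducible character $\rho \in \Irr(\mc S_\phi)$ to record which inner form one is in. Two structural inputs are needed. First, Kottwitz's theorem identifies equivalence classes of inner forms of $\GL_n(F)$ with $\Irr(Z(\SL_n(\C))) = \Irr(\mu_n)$; write $\chi_{D,m}$ for the character attached to $\GL_m(D)$, whose order equals the index $d$ of $D$ (so $\dim_F D = d^2$). Second, for $\phi = \bigoplus_i \phi_i^{\oplus m_i} \in \Phi(\GL_n(F))$ with distinct irreducible $\phi_i$ of dimension $n_i$, the centralizer of $\phi$ in $\GL_n(\C)$ is the connected group $\prod_i \GL_{m_i}(\C)$; hence $S_\phi = 1$ and the short exact sequence in the introduction reduces to $\mc S_\phi = \mc Z_\phi$. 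A short direct calculation then identifies this quotient of $\mu_n$ with $\mu_e$ for $e := \gcd_i n_i$.

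I would next match $\Irr(\mc S_\phi)$ with the set of inner forms for which $\phi$ is relevant. Relevance of $\phi$ for $\GL_m(D)$ amounts to the divisibility $d \mid n_i$ for all $i$, equivalently $d \mid e$. On the other hand, viewed inside $\Irr(\mu_n)$, the characters $\chi_{D,m}$ with $d \mid e$ are exactly those that factor through $\mu_n \twoheadrightarrow \mu_e = \mc S_\phi$, i.e., the elements of $\Irr(\mc S_\phi)$. This gives a canonical bijection, for each fixed $\phi$, between $\Irr(\mc S_\phi)$ and the inner forms where $\phi$ is relevant.

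Assembling the pieces, the map from left to right sends $(G,\pi)$ to $(\rec_{D,m}(\pi), \chi_{D,m})$, with $\chi_{D,m}$ regarded as an element of $\Irr(\mc S_\phi)$ through the above projection. The inverse sends $(\phi,\rho)$ first to the unique inner form $G = \GL_m(D)$ whose Kottwitz character pulls back to $\rho$ and then to $\rec_{D,m}^{-1}(\phi) \in \Irr(G)$. Bijectivity and well-definedness on equivalence classes (including $\GL_n(\C)$-conjugacy on the Galois side), as well as naturality, will follow from the corresponding properties of $\rec_{D,m}$ together with the fact that $\mc S_\phi$ is abelian.

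The hard part will be the compatibility underlying the second paragraph: showing that the Kottwitz character $\chi_{D,m}$ is really the character of $\mc S_\phi$ that is naturally dictated by $\rec_{D,m}$, and not merely one of the same order. Concretely, this asks one to trace through the Jacquet--Langlands correspondence at the heart of $\rec_{D,m}$ and check that the Hasse invariant of $D$ is linked to the dimension condition on the $\phi_i$ with the correct normalization. Once this compatibility is verified, the theorem follows from the assembly in the previous paragraph.
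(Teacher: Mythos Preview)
Your approach is essentially the paper's own for non-archimedean $F$: Lemma~\ref{lem:5.1} (which you rederive by computing $\mc S_\phi \cong \mu_e$ with $e=\gcd_i n_i$) together with the Kottwitz isomorphism \eqref{eq:Kott} identifies $\Irr(\mc S_\phi)$ with the inner forms for which $\phi$ is relevant, and then one applies $\rec_{D,m}$. Two points deserve correction.

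First, your ``hard part'' is a phantom. The bijection is \emph{defined} by sending $(\phi,\rho)$ to the inner form $G$ with Kottwitz character $\rho$ and then to $\rec_{D,m}^{-1}(\phi)$; there is no further compatibility between the Jacquet--Langlands correspondence and the Hasse invariant to verify. The only content is that $\chi_G$ factors through $\mc S_\phi$ exactly when $\phi$ is relevant for $G$, and you have already shown this in your second paragraph (this is Lemma~\ref{lem:5.1}). You may be thinking of the genuinely delicate compatibility that arises for $\SL_n$, where one must match a $2$-cocycle on $X^G(\Pi_\phi(G))$ with the extension $\mc S_{\phi^\sharp}$ (Theorem~\ref{thm:6.3}); nothing of that sort is needed here because $\mc S_\phi = \mc Z_\phi$ is abelian and merely labels the inner form.

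Second, and more substantively, your argument does not cover the archimedean case. The Kottwitz isomorphism \eqref{eq:Kott} as you invoke it---a bijection between inner forms and $\Irr(\mu_n)$---holds only for non-archimedean $F$; over $\R$ or $\C$ most characters of $Z(\SL_n(\C))$ do not correspond to any inner form. The paper therefore treats $F=\C$ and $F=\R$ by a direct computation: over $\C$ one finds $\mc S_\phi = 1$ always (since all irreducible $\phi_i$ are one-dimensional, your own formula gives $e=1$), and over $\R$ one checks by hand that $|\mc S_\phi|=2$ precisely when $\phi$ is relevant for $\GL_{n/2}(\mathbb H)$ and $|\mc S_\phi|=1$ otherwise. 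Your computation $\mc S_\phi \cong \mu_e$ actually survives in these cases and gives the right answer, but you still need to supply the matching with inner forms without appealing to \eqref{eq:Kott}.
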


For these Langlands parameters $\mc S_\phi = \mc Z_\phi$ and a character of $\mc Z_\phi$ 
determines an
inner form of $\GL_n (F)$ via the Kottwitz isomorphism \cite{Kot}. In contrast with the
usual LLC, our packets for general linear groups need not be singletons. To be precise,
the packet $\Pi_\phi$ contains the unique representation $\mathrm{rec}_{D,m}^{-1}(\phi)$  
of $G = \GL_m (D)$ if $\phi$ is relevant for $G$, and no $G$-representations otherwise.

A similar result holds for special linear groups, but with a few modifications. Firstly,
one loses canonicity, because in general there seems to be no natural way to parametrize
the members of an L-packet $\Pi_\phi (G^\sharp)$ (if there are more than one). Secondly,
the quaternion algebra $\mathbb H$ turns out to occupy an exceptional position. Our 
local Langlands correspondence for inner forms of the special linear group over a local
field $F$ can be stated as follows:

\begin{thm}\label{thm:1.2} (see Theorems \ref{thm:6.5} and \ref{thm:6.6}) \\
There exists a correspondence between:
\begin{itemize}
\item pairs $(G^\sharp,\pi)$ with $\pi \in \Irr (G^\sharp)$ and $G^\sharp$ an inner form 
of $\SL_n (F)$, considered up to equivalence;
\item $\SL_n (\C)$-conjugacy classes of pairs $(\phi,\rho)$ with $\phi \in \Phi (\SL_n (F))$
and $\rho \in \Irr (\mc S_\phi)$,
\end{itemize}
which is almost bijective, the only exception being that pairs 
$(\SL_{n/2}(\mathbb H),\pi)$ correspond to two parameters $(\phi,\rho_1)$ and $(\phi,\rho_2)$.
\enuma{
\item The group $G^\sharp$ determines $\rho \big|_{\mc Z_\phi}$ and conversely.
\item The correspondence satisfies the desired properties from \cite[\S 10.3]{Bor}, with
respect to restriction from inner forms of $\GL_n (F)$, temperedness and essential 
square-integrability of representations.
}
\end{thm}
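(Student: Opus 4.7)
The strategy is to build the $\SL_n$-correspondence on top of Theorem~\ref{thm:1.1} by restricting irreducible representations of $G = \GL_m(D)$ to $G^\sharp = G_{\der}$. Given $\phi \in \Phi(G^\sharp)$, I would first define the L-packet $\Pi_\phi(G^\sharp)$ as follows: pick any lift $\tilde\phi \in \Phi(G)$ (two lifts differ by an $F^\times$-character), set $\pi = \mathrm{rec}_{D,m}^{-1}(\tilde\phi) \in \Irr(G)$ using Theorem~\ref{thm:1.1}, and declare $\Pi_\phi(G^\sharp)$ to be the set of irreducible constituents of $\mathrm{Res}^G_{G^\sharp}\pi$. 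Independence of the lift is automatic, since twisting $\pi$ by $\chi \circ \mathrm{Nrd}$ leaves its restriction to $G^\sharp$ unchanged. That this restriction is multiplicity-free, of length equal to the order of the stabilizer $X^G(\pi) \subset \Irr(G/G^\sharp)$ of $\pi$ under twisting, extends to inner forms by the arguments of \cite{GeKn} and \cite{HiSa}.

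To attach the enhancement $\rho \in \Irr(\mc S_\phi)$ to each $\sigma \in \Pi_\phi(G^\sharp)$, I would use Clifford theory. The stabilizer $X^G(\pi)$ acts projectively on the space of $\pi$ via intertwining operators $I_\chi \colon \pi \isom \pi \otimes \chi$, with a well-defined 2-cocycle class $c_\pi \in H^2(X^G(\pi), \C^\times)$, and the irreducible constituents of $\mathrm{Res}^G_{G^\sharp}\pi$ are in bijection with irreducible $c_\pi$-projective representations of $X^G(\pi)$. The heart of the proof is to identify $X^G(\pi)$ with $\mc S_\phi / \mc Z_\phi$ and $c_\pi$ with the cocycle defining the central extension
\[
1 \to \mc Z_\phi \to \mc S_\phi \to \mc S_\phi / \mc Z_\phi \to 1,
\]
so that $c_\pi$-projective representations of $X^G(\pi)$ correspond precisely to $\rho \in \Irr(\mc S_\phi)$ with $\rho|_{\mc Z_\phi}$ equal to the Kottwitz character of the inner form $G^\sharp$. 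Varying $G^\sharp$ over all inner forms of $\SL_n(F)$ on which $\phi$ is relevant then yields the full correspondence, and property~(a) is built into this identification.

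Property~(b) will follow by transport from Theorem~\ref{thm:1.1}: essential square-integrability and temperedness are detected on the $\GL_m(D)$-side and preserved under restriction to $G^\sharp$, and the Borel desiderata involving restriction follow by construction. The $\SL_{n/2}(\mathbb{H})$ anomaly should arise from the parameters whose image lies in a particular Levi subgroup of $\PGL_n(\C)$ in which the component-group computation for the non-split quaternionic inner form produces an $\mc S_\phi$ containing an additional $\Z/2\Z$-factor with respect to the split side: two distinct irreducible representations $\rho_1, \rho_2$ of $\mc S_\phi$ with the correct restriction to $\mc Z_\phi$ both pair with the same constituent of $\mathrm{Res}^G_{G^\sharp}\pi$, producing the two-to-one exception.

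\textbf{Main obstacle.} The crux is the cocycle comparison in the middle step: matching the Clifford 2-cocycle $c_\pi$ with the central-extension cocycle of $\mc S_\phi$ over $\mc S_\phi/\mc Z_\phi$, canonically and compatibly with the Kottwitz parametrization of inner forms. Both the bijectivity off the exceptional locus and the precise mechanism of the $\SL_{n/2}(\mathbb{H})$ anomaly reside in this comparison. A secondary technical issue is handling positive characteristic $F$: here one must first establish the correspondence in characteristic zero and then transfer it via the close-fields method, which requires showing compatibility of restriction and of $\mc S_\phi$-enhancements with the congruence isomorphisms between close local fields.
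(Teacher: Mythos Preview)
Your strategy matches the paper's almost exactly: L-packets via restriction from $G=\GL_m(D)$, Clifford theory for the decomposition governed by the twisted group algebra $\C[X^G(\pi),\kappa_{\phi^\sharp}]$, the identification $X^G(\pi)\cong \mc S_{\phi^\sharp}/\mc Z_{\phi^\sharp}$, and the key cocycle comparison (this is Theorem~\ref{thm:6.3}, whose characteristic-zero case rests on \cite{HiSa} and whose positive-characteristic case is transferred by close fields via Theorems~\ref{thm:8.1}--\ref{thm:8.2}). You have correctly located the crux.

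Two corrections. First, the restriction $\mathrm{Res}^G_{G^\sharp}\pi$ is \emph{not} multiplicity-free in general, and its length is not $|X^G(\pi)|$: by \eqref{eq:6.14} each constituent $\pi(\phi^\sharp,\rho)$ occurs with multiplicity $\dim\rho$, and the number of constituents is $|\Irr(\C[X^G(\pi),\kappa_{\phi^\sharp}])|$. These coincide with your claims only when the cocycle is trivial.

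Second, your diagnosis of the $\SL_{n/2}(\mathbb H)$ anomaly is off. It is not a Levi-subgroup phenomenon; it occurs for \emph{every} $\phi^\sharp$ relevant for $\SL_{n/2}(\mathbb H)$. The mechanism is that $\mathrm{Nrd}(\mathbb H^\times)=\R_{>0}\neq\R^\times$, so $G=G^\sharp Z(G)$ and $X^G(\pi)=1$, whence \eqref{eq:6.1} and \eqref{eq:6.4} break down for $D=\mathbb H$. A direct computation (see the proof of Theorem~\ref{thm:6.6}) shows $|\mc S_{\phi^\sharp}/\mc Z_{\phi^\sharp}|=2$ for every such $\phi^\sharp$, so $e_{\chi_{\mathbb H^\times}}\C[\mc S_{\phi^\sharp}]\cong\C\oplus\C$ has two irreducible modules, both mapping to the single irreducible restriction. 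The two-to-one failure thus comes from the mismatch between $X^G(\pi)$ and $\mc S_{\phi^\sharp}/\mc Z_{\phi^\sharp}$, not from an extra $\Z/2\Z$ appearing inside $\mc S_\phi$ relative to the split form.
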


In the archimedean case the classification of $\Irr (\SL_m (D))$ is well-known, at least
for $D \neq \mathbb H$. The main value of our result lies in the strong analogy with
the non-archimedean case. The reason for the lack of bijectivity for the special linear
groups over the quaternions is easily identified. Namely, the reduced norm map for
$\mathbb H$ satisfies Nrd$(\mathbb H^\times) = \R_{>0}$ whereas for all other local
division algebras $D$ with centre $F$ the reduced norm map is surjective, that is,
Nrd$(D^\times) = F^\times$. Of course there are various ad hoc ways to restore the 
bijectivity in Theorem \ref{thm:1.2}, for example by decreeing that $SL_m (\mathbb H)$
appears twice among the equivalence classes of inner forms of $SL_{2m}(\R)$. This can
be achieved in natural way with strong inner forms, as in \cite{Ada}.
But one may also argue that for $\SL_m (\mathbb H)$
one would actually be better off without any component groups.

For $p$-adic fields $F$, the above theorem can be derived rather quickly from the work
of Hiraga and Saito \cite{HiSa}. 

By far the most difficult case of Theorem \ref{thm:1.2} is that where the local field
$F$ has positive characteristic. The paper \cite{HiSa} does not apply in this case,
and it seems hard to generalize the techniques from \cite{HiSa} to fields of
positive characteristic.

Our solution is to use the method of close fields to reduce it to the $p$-adic case.
Let $F$ be a local field of characteristic $p ,\; \mf o_F$ its ring of integers and
$\mf p_F$ the maximal ideal of $\mf o_F$. There exist finite extensions $\widetilde F$
of $\mathbb Q_p$ which are $l$-close to $F$, which means that $\mf o_F / \mf p_F^l$ 
is isomorphic to the corresponding ring for $\widetilde{F}$. Let $\widetilde D$
be a division algebra with centre $\widetilde F$, such that $D$ and $\widetilde D$
have the same Hasse invariant. Let $K_r$ be the standard congruence subgroup of
level $r \in \N$ in $\GL_m (\mf o_D)$ and let $\Irr (G,K_r)$ be the set of irreducible
representations of $G = \GL_m (D)$ with nonzero $K_r$-invariant vectors. Define
$\widetilde{K_r} \subset \GL_m (\widetilde{D})$ and 
$\Irr (\GL_m (\widetilde{D}),\widetilde{K_r})$ in the same way.

For $l$ sufficiently large compared to $r$, the method of close fields provides
a bijection
\begin{equation}\label{eq:1.4}
\Irr (\GL_m (D),K_r) \to \Irr (\GL_m (\widetilde{D}),\widetilde{K_r})
\end{equation}
which preserves almost all the available structure \cite{Bad1}. But this is not 
enough for Theorem \ref{thm:1.2}, we also need to relate to the local Langlands
correspondence. The $l$-closeness of $F$ and $\widetilde{F}$ implies
that the quotient of the Weil group of $F$ by its $l$-th ramification subgroup
is isomorphic to the analogous object for $\widetilde F$ \cite{Del}. This yields
a natural bijection
\begin{equation}\label{eq:1.5}
\Phi_l (\GL_m (D)) \to \Phi_l (\GL_m (\widetilde{D})) 
\end{equation}
between Langlands parameters that are trivial on the respective $l$-th ramification
groups. We show that:

\begin{thm}\label{thm:1.3} (see Theorems \ref{thm:8.1} and \ref{thm:8.2}) \\
Suppose that $F$ and $\widetilde F$ are $l$-close and that $l$ is sufficiently large 
compared to $r$. Then the maps \eqref{eq:1.2}, \eqref{eq:1.4}
and \eqref{eq:1.5} form a commutative diagram 
\[
\begin{array}{ccc}
\Irr (\GL_m (D),K_r) & \rightarrow & 
\Irr (\GL_m (\widetilde{D}), \widetilde{K}_r) \\
\downarrow & & \downarrow \\
\Phi_l (\GL_m (D)) & \rightarrow & \Phi_l (\GL_m (\widetilde{D})) .
\end{array}
\]
In the special case $D = F$ and $\widetilde D = \widetilde F$ 
this holds for all $l > 2^{n-1} r$.
\end{thm}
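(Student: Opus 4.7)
The plan is to split the argument into two layers: first the special case $D = F$, and then the general inner form via the Jacquet--Langlands correspondence. For the split case, I would characterize both sides of the diagram intrinsically. The LLC for $\GL_n(F)$ of Harris--Taylor and Henniart is uniquely determined by its behavior on supercuspidal representations, compatibility with parabolic induction and with twists by characters, and preservation of $L$- and $\varepsilon$-factors of pairs. I would argue that each of these ingredients is preserved under the close-field bijections, on both the automorphic side (via Badulescu's version of Kazhdan's method and the transfer of Hecke algebras $\mathcal H(\GL_n(F), K_r) \cong \mathcal H(\GL_n(\widetilde F), \widetilde K_r)$) and on the Galois side (via Deligne's theorem on the quotient of the Weil group by its $l$-th ramification subgroup), provided $l$ is large enough compared to the depth $r$ of the representations involved. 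The bound $l > 2^{n-1} r$ would arise by iterating depth bounds for the Rankin--Selberg local factors: to test a depth~$r$ representation of $\GL_n(F)$ against all supercuspidals of smaller rank needed by Henniart's characterization, one needs the close-field isomorphism at levels that roughly double at each step as one descends in rank.

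For the general inner form case, one uses that $\rec_{D,m}$ is defined through the Jacquet--Langlands correspondence $\JL$ between essentially square-integrable representations of $\GL_m(D)$ and of $\GL_n(F)$, combined with compatibility with parabolic induction from the standard Levi subgroups of $\GL_m(D)$. The key additional input is that $\JL$ itself commutes with the close-field bijections \eqref{eq:1.4} when $l$ is sufficiently large, which is essentially a theorem of Badulescu (proved by comparing character identities via orbital integrals, both of which are preserved when the residue rings $\mathfrak o_F / \mathfrak p_F^l$ and $\mathfrak o_{\widetilde F} / \mathfrak p_{\widetilde F}^l$ are identified). Combined with the split case already established, this promotes the commutative square from $D = F$ to arbitrary $D$. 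On the Galois side there is nothing extra to verify: the bijection \eqref{eq:1.5} does not involve relevance for a specific inner form, since the Langlands dual is the same for all inner forms of $\GL_n$.

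The main obstacle is the bookkeeping of depths. Each of the operations above --- Rankin--Selberg convolution, passage from standard modules to Langlands quotients, parabolic induction from a Levi subgroup, and application of $\JL$ --- can increase depths by factors depending on $n$, $m$, or the Hasse invariant of $D$. One must ensure that the cumulative depth never exceeds $l$ at the moment one invokes the close-field dictionary. In the split case this analysis can be made fully explicit, producing the clean bound $l > 2^{n-1} r$, where the factor $2^{n-1}$ reflects the recursive characterization via pairs: peeling off one rank at a time can essentially double the relevant depth. For general $D$ the depth change under $\JL$ is harder to pin down quantitatively because $\JL$ is not given by a local formula on matrix coefficients, which is why Theorem~\ref{thm:1.3} only states that $l$ is ``sufficiently large'' in the non-split case rather than giving a numerical bound.
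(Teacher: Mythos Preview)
Your proposal is correct and matches the paper's approach essentially point for point: induction on $n$ in the split case using Henniart's characterization by $\varepsilon$-factors of pairs (with the depth doubling at each inductive step yielding $l > 2^{n-1}r$), then reduction of the general inner form to the split case via Badulescu's compatibility of the Jacquet--Langlands correspondence with close fields. One ingredient you leave implicit but which the paper makes explicit is the depth-preservation statement $d(\pi) = d(\rec_{F,n}(\pi))$ for supercuspidals (Proposition~\ref{prop:1.4}), needed to ensure that the Langlands parameter $\phi_l$ pulled back from $\widetilde F$ actually satisfies the depth hypothesis $d(\phi_l) \le r-1$ required by the characterization theorem.
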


The special case was also proven by Ganapathy \cite{Gan1,Gan2}, but without
an explicit lower bound on $l$.

Theorem \ref{thm:1.3} says that the method of close fields essentially preserves Langlands 
parameters. The proof runs via the only accessible characterization of the LLC for general
linear groups: by means of $\vareps$- and $\gamma$-factors of pairs of representations
\cite{Hen1}. 

To apply Henniart's characterization with maximal effect, we establish a result with
independent value. Given a Langlands parameter $\phi$, we let $d(\phi)$ be the smallest
rational number such that $\phi \notin \Phi_{d(\phi)}(\GL_n (F))$, that is, the smallest 
number such that $\phi$ is nontrivial on the $d(\phi)$-th ramification group of the Weil 
group of $F$. For a supercuspidal representation $\pi$ of $\GL_n (F)$, let $d (\pi)$ be its
normalized level, as in \cite{Bus}. 

\begin{prop}\label{prop:1.4} (see Proposition \ref{lem:pi}) \\
The local Langlands correspondence for supercuspidal representations of $\GL_n (F)$
preserves depths, in the sense that 
\[
d(\pi) = d (\rec_{F,n}(\pi)) . 
\]
\end{prop}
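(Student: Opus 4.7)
Let $\phi = \rec_{F,n}(\pi)$; since $\pi$ is supercuspidal, $\phi$ is an irreducible $n$-dimensional representation of $W_F$. The plan is to read off both depths $d(\pi)$ and $d(\phi)$ from a single invariant that is preserved by $\rec_{F,n}$, namely the family of conductor exponents of twists by characters. Concretely, compatibility of the LLC with twisting and with $\vareps$-factors of pairs (from Henniart's characterization \cite{Hen1}) gives
\[
a\bigl(\pi \otimes (\chi \circ \det)\bigr) \;=\; a(\phi \otimes \chi')
\]
for every character $\chi$ of $F^\times$, where $\chi'$ is the $W_F$-character corresponding to $\chi$ via local class field theory. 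Under this correspondence $d(\chi) = d(\chi')$ by the Hasse--Arf theorem, so characters of $F^\times$ and $W_F$ may be indexed by the same depth parameter.

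\textbf{Break point argument.} The core of the proof is to establish matching linear formulas on both sides in the high-depth regime. On the representation side, the Bushnell--Kutzko theory of simple types (or equivalently an analysis of the $\gamma$-factor $\gamma(s, \pi \times \chi, \psi)$) yields
\[
a\bigl(\pi \otimes (\chi \circ \det)\bigr) \;=\; n\bigl(1 + d(\chi)\bigr) \qquad \text{whenever } d(\chi) > d(\pi).
\]
On the Galois side, a routine computation with the upper-numbering ramification filtration and the Swan conductor gives
\[
a(\phi \otimes \chi') \;=\; n\bigl(1 + d(\chi')\bigr) \qquad \text{whenever } d(\chi') > d(\phi).
\]
Both formulas fail as soon as the twisting character has depth at most that of the underlying representation/parameter: the conductor is then strictly smaller than $n(1+d(\chi))$ because cancellations occur at the top stratum. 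The threshold at which the linear formula first becomes valid therefore detects $d(\pi)$ on one side and $d(\phi)$ on the other. Since the two families of conductors agree, the two thresholds must coincide, yielding $d(\pi) = d(\phi)$.

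\textbf{Main obstacle.} The Galois-side formula is essentially formal. The technical heart of the argument lies on the representation side: one must prove that $a(\pi \otimes (\chi \circ \det))$ really does jump to the linear formula as soon as $d(\chi)$ exceeds $d(\pi)$, and is strictly smaller otherwise. This comes down to examining how a simple stratum realizing $\pi$ interacts with the stratum of a sufficiently ramified character, which is controlled by Bushnell's theory of normalized levels in \cite{Bus}. An auxiliary subtlety is the depth-zero case, where $\pi$ has nonzero $\GL_n(\mathfrak{o}_F)$-invariants and $\phi$ factors through the tame quotient of $W_F$; here the equality $d(\pi) = 0 = d(\phi)$ follows directly from the compatibility of the depth-zero LLC with inflation from the residue field, so no break-point analysis is needed.
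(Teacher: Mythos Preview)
Your break-point strategy is workable, but it takes a longer road than the paper does, and it contains two inaccuracies worth flagging.

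\textbf{Comparison with the paper.} The paper never twists by characters at all. It simply observes that the LLC preserves $\vareps$-factors, hence the Artin conductor and the Godement--Jacquet conductor agree: $a(\phi)=f(\pi)$. On the Galois side, Lemma~\ref{lem:phi} gives $a(\phi)=n\bigl(1+d(\phi)\bigr)$ for irreducible $\phi$. On the representation side, Bushnell's results in \cite{Bus} (specifically Theorem~3 and (3.7) there, together with an argument that the minimizing hereditary order has the right period) give $f(\pi)=n\bigl(1+d(\pi)\bigr)$ for supercuspidal $\pi$. Equating the two yields $d(\pi)=d(\phi)$ directly. So the ``technical heart'' you identify---relating conductor to depth via Bushnell's stratum theory---is exactly what the paper does, but applied once to $\pi$ itself rather than to a family of twists. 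Your threshold argument recovers the same information in a more roundabout way: to prove your representation-side formula $a\bigl(\pi\otimes(\chi\circ\det)\bigr)=n(1+d(\chi))$ for $d(\chi)>d(\pi)$, you would in effect need the Bushnell formula for the twisted representation anyway.

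\textbf{Two slips.} First, your claim that for $d(\chi)\le d(\pi)$ the conductor is ``strictly smaller than $n(1+d(\chi))$'' is wrong in general: when $d(\chi)<d(\pi)$ the twist has depth $d(\pi)$, so its conductor is $n(1+d(\pi))$, which is \emph{larger} than $n(1+d(\chi))$. The threshold detection still works because all you need is that the linear formula fails below $d(\pi)$, but your stated reason (cancellation at the top stratum) applies only at the boundary $d(\chi)=d(\pi)$, not below it. Second, a depth-zero supercuspidal of $\GL_n(F)$ for $n>1$ has no nonzero $\GL_n(\mathfrak o_F)$-invariants; what you mean is that it has nonzero $K_1$-invariants, i.e.\ it is inflated from a cuspidal representation of $\GL_n(\mathfrak o_F/\mathfrak p_F)$.
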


\textbf{Acknowledgements.}
The authors wish to thank Ioan Badulescu for interesting emails about the method
of close fields, Wee Teck Gan for explaining some subtleties of inner forms and
Guy Henniart for pointing out a weak spot in an earlier version of Theorem \ref{thm:LLCr}.

\section{Inner forms of $\GL_n (F)$}
\label{sec:GLn}

Let $F$ be a local field and let $D$ be a division algebra with centre $F$, of dimension 
$\dim_F (D) = d^2$. The $F$-group $\GL_m (D)$ is an inner form of $\GL_{md}(F)$, and 
conversely every inner form of $\GL_n (F)$ is isomorphic to such a group.

In the archimedean case there are only three possible division algebras:
$\R, \C$ and $\mathbb H$. The group $\GL_m (\mathbb H)$ is an inner form of $\GL_{2m}(\R)$, 
and (up to isomorphism) that already accounts for all the inner forms of the groups 
$\GL_n (\R)$ and $\GL_n (\C)$. One can parametrize these inner forms with characters of order 
at most two of $Z (\SL_n(\C))$, such that $\GL_n (F)$ is associated to the trivial character and
\begin{equation}\label{eq:5.7} 
\GL_m (\mathbb H) \text{ corresponds to the character of order two of } Z (\SL_{2m}(\C)).
\end{equation}
Until further notice we assume that $F$ is non-archimedean.
Let us make our equivalence relation on the set of inner forms of $\GL_n (F)$ explicit.
We start with the Galois cohomology group $H^1 (F,\PGL_n (F))$. It parametrizes the 
isomorphism classes of central simple $F$-algebras of dimension $n^2$.
By \cite[Proposition 6.4]{Kot} there exists a natural bijection
\begin{equation}\label{eq:Kott}
H^1 (F,\PGL_n (F)) \to \Irr \big( Z(\SL_n (\C)) \big) = 
\Irr \big( \{z \in \C^\times : z^n = 1\} \big) .
\end{equation}
Clearly the map
\[
\Irr (\{z \in \C^\times : z^n = 1\}) \to \{z \in \C^\times : z^n = 1\} : 
\chi \mapsto \chi (\exp (2\pi \sqrt{-1} / n))
\]
is bijective. The composition of these two maps can be also be interpreted in terms
of classical number theory. For $\Mat_m (D)$ with $md = n$, the Hasse-invariant
$h(D)$ (in the sense of Brauer theory) is a primitive $d$-th root of unity. The 
element of $H^1 (F,\PGL_n (F))$ associated to $\Mat_m (D)$ has the same image
$h(D)$ in $\{z \in \C^\times : z^n = 1\}$.
In particular $1 \in \C^\times$ is associated to $\Mat_n (F)$ and the primitive 
$n$-th roots of unity correspond to division algebras of dimension $n^2$ over 
their centre $F$.

We use the same equivalence relation on the collection of inner forms of $\GL_n (F)$.
In other words, we define that the equivalence classes of such inner forms are in 
bijection with the isomorphism classes of central simple $F$-algebras of dimension
$n^2$ via $\Mat_m (D) \mapsto \GL_m (D)$. We warn the reader that this is not the same
as isomorphism classes of inner forms of $\GL_n (F)$. Namely, if $h(D') = h(D)^{-1}$,
then $\Mat_m (D')$ is isomorphic to the opposite algebra of $\Mat_m (D)$, and
\[
\GL_m (D) \to \GL_m (D^{\op}) \cong \GL_m (D') : x \mapsto x^{-1}  
\]
is  a group isomorphism. All isomorphisms between the groups $\GL_m (D)$ arise
in this way.

Furthermore there is a standard presentation of the division algebras $D$. Let $L$ 
be the unique unramified extension of $F$ of degree $d$ and let $\chi$ be the character
of Gal$(L/F) \cong \Z / d \Z$ which sends the Frobenius automorphism to $h(D)$.
If $\varpi_F$ is a uniformizer of $F$, then $D$ is isomorphic to the cyclic algebra 
$[L/F,\chi,\varpi_F]$, see Definition IX.4.6 and Corollary XII.2.3 of \cite{Wei2}. 
We will call a group of the form
\begin{equation}\label{eq:5.6}
\GL_m \big( [L/F,\chi,\varpi_F] \big)
\end{equation}
a standard inner form of $\GL_n (F)$.

The local Langlands correspondence for $G = \GL_m (D)$ has been known to experts for
considerable time, although it did not appear in the literature until recently
\cite{HiSa,ABPS1}. We need to understand it well for our later arguments, so we recall
its construction. It generalizes and relies on the LLC for general linear groups:
\[
\mathrm{rec}_{F,n} : \Irr (\GL_n (F)) \to \Phi (\GL_n (F)) .
\]
The latter was proven for supercuspidal representations in \cite{LRS,HaTa,Hen2}, 
and extended from there to $\Irr (\GL_n (F))$ in \cite{Zel}.

As $G$ is an inner form of $\GL_n (F) ,\; \check G = \GL_n (\C)$ and the action of
Gal$(\overline{F} / F)$ on $\GL_n (\C)$ determined by $G$ is by inner automorphisms.
Therefore we may take as Langlands dual group ${}^{L}G = \check G = \GL_n (\C)$.
Let $\phi \in \Phi (\GL_n (F))$ and let $\check M \subset \GL_n (\C)$ be a Levi subgroup
that contains im$(\phi)$ and is minimal for this property. As for all Levi subgroups,
\[
\check M \cong \GL_{n_1} (\C) \times \cdots \times \GL_{n_k}(\C)
\]
for some integers $n_i$ with $\sum_{i=1}^k n_i = n$. Then $\phi$ is relevant for $G$ if
and only if $\check M$ corresponds to a Levi subgroup $M \subset G$. This is equivalent
to $m_i := n_i / d$ being an integer for all $i$. Moreover in that case 
\begin{equation}\label{eq:5.1}
M \cong \GL_{m_i}(D) \times \cdots \times \GL_{m_k}(D) .
\end{equation}
Now consider any $\phi \in \Phi (G)$. Conjugating by a suitable element of $\check G$, 
we can achieve that
\begin{itemize}
\item $\check M = \prod_{i=1}^l \GL_{n_i}(\C)^{e_i}$ and $M = \prod_{i=1}^l \GL_{m_i}(D)^{e_i}$ 
are standard Levi subgroups of $\GL_n (C)$ and $\GL_m (D)$, respectively;
\item $\phi = \prod_{i=1}^l \phi_i^{\otimes e_i}$ with $\phi_i \in \Phi (\GL_{m_i}(D))$
and im$(\phi_i)$ not contained in any proper Levi subgroup of $\GL_{n_i}(\C)$;
\item $\phi_i$ and $\phi_j$ are not equivalent if $i \neq j$.
\end{itemize}
Then $\mathrm{rec}_{F,n_i}^{-1}(\phi_i) \in \Irr (\GL_{n_i}(F))$ is essentially 
square-integrable. Recall that the Jacquet--Langlands correspondence 
\cite{Rog,DKV,Bad1} is a natural bijection
\[
\JL : \Irr_{\ess L^2} (\GL_m (D)) \to \Irr_{\ess L^2}(\GL_n (F)) 
\]
between essentially square-integrable irreducible representations of
$G = \GL_m (D)$ and $\GL_n (F)$. It gives
\begin{equation}\label{eq:5.5}
\begin{aligned}
& \omega_i := \JL^{-1} \big( \mathrm{rec}_{F,{n_i}}^{-1}(\phi_i) \big) 
\in \Irr_{\ess L^2}(\GL_{m_i}(D)) , \\
& \omega := \prod\nolimits_{i=1}^l \omega_i^{\otimes e_i} \in \Irr_{\ess L^2}(M) .
\end{aligned}
\end{equation}
We remark that $\omega$ is square-integrable modulo centre if and only all 
$\mathrm{rec}_{F,{n_i}}^{-1}(\phi_i)$ are so, because this property is preserved
by the Jacquet--Langlands correspondence. The Zelevinsky classification for
$\Irr (\GL_{n_i}(F))$ \cite{Zel} (which is used for $\mathrm{rec}_{F,n_i}$) shows 
that, in the given circumstances, this is equivalent to $\phi_i$ being bounded. 
Thus $\omega$ is square-integrable modulo centre if and only $\phi$ is bounded.

The assignment $\phi \mapsto (M,\omega)$ sets up a bijection
\begin{equation}\label{eq:LLC2}
\Phi (G) \longleftrightarrow \{ (M,\omega) : M \text{ a Levi subgroup of } G ,
\omega \in \Irr_{\ess L^2}(M) \} / G .
\end{equation}
It is known from \cite[Theorem B.2.d]{DKV} and \cite{Bad2} that for inner forms of
$\GL_n (F)$ normalized parabolic induction sends irreducible square-integrable
(modulo centre) representations to irreducible tempered representations. 
Together with the Langlands classification \cite{Lan,Kon} this implies that there exists 
a natural bijection between $\Irr (G)$ and the right hand side of \eqref{eq:LLC2}. 
It sends $(M,\omega)$ to the unique
Langlands quotient $L(P,\omega)$ of $I_P^G (\omega)$, where $P \subset G$ is a
parabolic subgroup with Levi factor $M$, with respect to which $\omega$ is positive.

The composition 
\begin{equation}\label{eq:5.2}
\Phi (G) \to \Irr (G) : \phi \mapsto (M,\omega) \mapsto L(P,\omega) 
\end{equation}
is the local Langlands correspondence for $\GL_m (D)$. 

By construction $L(P,\omega)$ is essentially square-integrable if and only if $\phi$
is not contained in any proper Levi subgroup of $\GL_m (D)$. By the uniqueness part
of the Langlands classification \cite[Theorem 3.5.ii]{Kon} $L(P,\omega)$ is tempered 
if and only if $\omega$ is square-integrable
modulo centre, which by the above is equivalent to boundedness of $\phi \in \Phi (G)$.

We note that all the R-groups and component groups are trivial for $G$, and that all
the L-packets $\Pi_\phi (G) = \{ L(P,\omega) \}$ are singletons. This means that
\eqref{eq:5.2} is bijective, and that it has an inverse
\begin{equation}\label{eq:5.4}
\mathrm{rec}_{D,m} : \Irr (G) \to \Phi (G) .
\end{equation}
Because both the LLC for $\Irr_{\ess L^2}(\GL_{n_i}(F))$, the Jacquet--Langlands 
correspondence and $I_P^G$ respect tensoring with unramified characters, 
$\mathrm{rec}_{D,m}(L(P,\ochi))$ and $\mathrm{rec}_{D,m}(L(P,\omega))$ differ only
by the unramified Langlands parameter for $M$ which corresponds to $\chi$. 

In the archimedean case Langlands \cite{Lan} himself established the correspondence 
between the irreducible admissible representations of $\GL_m (D)$ and Langlands
parameters. The paper \cite{Lan} applies to all real reductive groups, but it completes 
the classsification only if parabolic induction of tempered representations of Levi
subgroups preserves irreducibility. That is the case for $\GL_n (\C)$ by the Borel--Weil 
theorem, and for $\GL_n (\R)$ and $\GL_m (\mathbb H)$ by \cite[\S 12]{BaRe}.

The above method to go from essentially square-integrable
to irreducible admissible representations is essentially the same over all local fields,
and stems from \cite{Lan}. There also exists a Jacquet--Langlands correspondence over
local archimedean fields \cite[Appendix D]{DKV}. Actually it is very simple, the
only nontrivial cases are $\GL_2 (\R)$ and $\mathbb H$. Therefore it is justified
to say that \eqref{eq:5.5}--\eqref{eq:5.4} hold in the archimedean case.

With the S-groups from \cite{Art3} we can build a more subtle version of \eqref{eq:5.4}.
Since $Z_{\GL_n (\C)}(\phi)$ is connected,
\begin{multline}\label{eq:5.3}
\mc{S}_\phi = \cent (\phi) / \cent (\phi)^\circ = 
Z (\SL_n (\C)) Z_{\SL_n (\C)}(\phi )^\circ / Z_{\SL_n (\C)}(\phi )^\circ \cong \\
Z(\SL_n (\C)) / \big( Z(\SL_n (\C)) \cap Z_{\SL_n (\C)}(\phi )^\circ \big) .
\end{multline}
Let $\chi_G \in \Irr \big( Z(\SL_n (\C)) \big)$ be the character associated 
to $G$ via \eqref{eq:Kott} or \eqref{eq:5.7}.

\begin{lem}\label{lem:5.1}
A Langlands parameter $\phi \in \Phi (\GL_n (F))$ is relevant for $G = \GL_m (D)$
if and only if $\ker \chi_G \supset Z(\SL_n (\C)) \cap \cent (\phi)^\circ$. 
\end{lem}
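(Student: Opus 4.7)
The plan is to decompose $\phi$ into its irreducible constituents, compute $\cent(\phi)^\circ$ explicitly, determine its intersection with $Z(\SL_n(\C))$, and match the result against $\ker\chi_G$.

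First, write $\phi = \bigoplus_{i=1}^l \phi_i^{\oplus e_i}$ as a semisimple $n$-dimensional representation of $W_F \times \SL_2(\C)$, with pairwise inequivalent irreducible constituents $\phi_i$ of dimension $n_i$, so $\sum_i e_i n_i = n$. As recalled in the discussion preceding \eqref{eq:LLC2}, a minimal Levi of $\GL_n(\C)$ containing $\mathrm{im}(\phi)$ is conjugate to the standard block-diagonal Levi $\check M \cong \prod_{i=1}^l \GL_{n_i}(\C)^{e_i}$. Thus $\phi$ is relevant for $G = \GL_m(D)$ if and only if $d \mid n_i$ for every $i$.

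Next I would compute $\cent(\phi)^\circ$. By Schur's lemma the centralizer $Z_{\GL_n(\C)}(\mathrm{im}\,\phi) \cong \prod_{i=1}^l \GL_{e_i}(\C)$, embedded in $\GL_n(\C)$ via $(A_i)_i \mapsto \bigoplus_i (A_i \otimes I_{n_i})$. This group is connected, and $\det$ restricts on it to the surjective character $(A_i)_i \mapsto \prod_i \det(A_i)^{n_i}$. Hence $\cent(\phi) = Z_{\SL_n(\C)}(\mathrm{im}\,\phi)$ is the kernel of this character, and the long exact sequence of homotopy groups gives $\pi_0(\cent(\phi)) \cong \Z / n_\phi \Z$ where $n_\phi := \gcd_i n_i$, since the induced map $\Z^l = \pi_1(\prod_i \GL_{e_i}(\C)) \to \Z = \pi_1(\C^\times)$ sends the $i$-th generator to $n_i$.

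To locate the scalars, given $\zeta = e^{2\pi ik/n} \in \mu_n = Z(\SL_n(\C))$ I would take the explicit path $t \mapsto (e^{2\pi ikt/n} I_{e_i})_i$ in $\prod_i \GL_{e_i}(\C)$ joining $I$ to $\zeta I_n$; its image under $\prod_i \det(\cdot)^{n_i}$ is the loop $t \mapsto e^{2\pi ikt}$ of winding number $k$. The connecting map of the homotopy sequence identifies the component of $\zeta I_n$ in $\cent(\phi)$ with $k \bmod n_\phi$, so $\zeta I_n \in \cent(\phi)^\circ$ iff $n_\phi \mid k$; equivalently, $Z(\SL_n(\C)) \cap \cent(\phi)^\circ = \mu_{n/n_\phi}$.

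Finally, since $h(D)$ is a primitive $d$-th root of unity, $\chi_G$ has order $d$ as a character of $\mu_n$, and $\ker\chi_G$ is the unique subgroup of $\mu_n$ of index $d$, namely $\mu_m$. The inclusion $\mu_m \supset \mu_{n/n_\phi}$ holds iff $n/n_\phi \mid m$ iff $d \mid n_\phi$ iff $d \mid n_i$ for every $i$, which by the first step is precisely relevance of $\phi$ for $G$. The main technical step is the $\pi_0$-computation, in particular pinning down in which connected component of $\cent(\phi)$ a given scalar $\zeta I_n$ sits; the remainder is a routine divisibility comparison.
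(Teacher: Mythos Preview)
Your proof is correct. The paper does not give a direct argument here; it simply cites \cite[Lemma 9.1]{HiSa} and observes that the proof there, written under the standing assumption $\mathrm{char}(F)=0$, goes through unchanged in positive characteristic. By contrast, you give a self-contained computation: decompose $\phi$ into irreducibles, identify $Z_{\GL_n(\C)}(\phi)\cong\prod_i\GL_{e_i}(\C)$ via Schur's lemma, and use the long exact homotopy sequence for the surjection $\prod_i \GL_{e_i}(\C) \to \C^\times$, $(A_i)\mapsto\prod_i\det(A_i)^{n_i}$, to determine exactly which central elements lie in $C(\phi)^\circ$. This is more explicit and has the side benefit of making the equality $C(\phi) = Z(\SL_n(\C))\,C(\phi)^\circ$ used in \eqref{eq:5.3} transparent, since your component calculation shows the natural map $\mu_n \to \pi_0(C(\phi)) \cong \Z/n_\phi\Z$, $e^{2\pi i k/n}\mapsto k\bmod n_\phi$, is surjective. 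The trade-off is that the result in \cite{HiSa} is stated in the generality of arbitrary inner forms, whereas your argument is tailored to $\GL_n$; for the purposes of this lemma that is no loss.
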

\begin{proof}
This is \cite[Lemma 9.1]{HiSa} for inner forms of $\GL_n (F)$. Although a standing 
assumption in \cite{HiSa} is that char$(F) = 0$, the proof of this result works just
as well in positive characteristic.
\end{proof}

We regard
\[
\Phi^e (\text{inn } \GL_n (F)) := \{ (\phi,\rho) : \phi \in \Phi (\GL_n (F)),
\rho \in \Irr (\mc{S}_\phi) \}
\]
as the collection of enhanced Langlands parameters for all inner forms of $\GL_n (F)$.
With this set can establish the local Langlands correspondence for all such inner
forms simultaneously. To make it bijective, we must choose one group in each equivalence
class of inner forms of $\GL_n (F)$. In the archimedean case it suffices to say that
we use the quaternions, and in the non-archimedean case we take the standard inner 
forms \eqref{eq:5.6}.

\begin{thm}\label{thm:5.2}
Let $F$ be a local field. There exists a canonical bijection
\[
\begin{array}{lcl} 
\Phi^e (\mathrm{inn} \; \GL_n (F)) & \to & \{ (G,\pi) : G \text{ standard inner form of } 
\GL_n (F), \pi \in \Irr (G) \} , \\
(\phi,\chi_G) & \mapsto & (G,\Pi_\phi (G)) .
\end{array} 
\]
\end{thm}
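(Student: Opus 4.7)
The plan is to assemble Theorem~\ref{thm:5.2} directly from Lemma~\ref{lem:5.1}, the computation~\eqref{eq:5.3}, the Kottwitz parametrization~\eqref{eq:Kott}/\eqref{eq:5.7}, and the bijection~\eqref{eq:5.4}. The key point is that for inner forms of $\GL_n$ the group $\mc S_\phi$ is abelian, so an element $\rho\in\Irr(\mc S_\phi)$ is the same data as a character of $Z(\SL_n(\C))$ satisfying a relevance constraint.

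First I would set up the source side. By \eqref{eq:5.3}, $\mc S_\phi$ is canonically a quotient of $Z(\SL_n(\C))$, so pulling back gives a canonical bijection
\[
\Irr(\mc S_\phi) \isom \{\chi \in \Irr(Z(\SL_n(\C))) : \chi \text{ trivial on } Z(\SL_n(\C)) \cap C(\phi)^\circ\}.
\]
Via \eqref{eq:Kott} (in the non-archimedean case) or \eqref{eq:5.7} (archimedean), each such character $\chi$ is of the form $\chi_G$ for a uniquely determined equivalence class of inner forms of $\GL_n(F)$, which we represent by the standard inner form $G = \GL_m([L/F,\chi,\varpi_F])$ (or by $\GL_m(\mathbb H)$ in the archimedean case). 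By Lemma~\ref{lem:5.1}, the condition that $\chi_G$ descend to $\mc S_\phi$ is exactly the condition that $\phi$ be relevant for $G$. Hence the data of a pair $(\phi,\rho) \in \Phi^e(\mathrm{inn}\,\GL_n(F))$ is equivalent to a triple $(\phi, G, \chi_G)$ where $G$ is a standard inner form for which $\phi$ is relevant.

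Next I would use the LLC for each individual inner form. Since $\phi$ is relevant for $G$, the bijection~\eqref{eq:5.4} produces a unique irreducible representation $\pi := \mathrm{rec}_{D,m}^{-1}(\phi) \in \Irr(G)$, and the singleton L-packet $\Pi_\phi(G) = \{\pi\}$ is well-defined. Thus the rule $(\phi,\chi_G) \mapsto (G,\pi)$ gives a well-defined map to the target set. Conversely, given $(G,\pi)$ with $G$ a standard inner form, set $\phi = \mathrm{rec}_{D,m}(\pi)\in\Phi(\GL_n(F))$, which is automatically relevant for $G$; the character $\chi_G$ attached to $G$ by Kottwitz then descends to $\mc S_\phi$ by Lemma~\ref{lem:5.1} and gives the desired element $\rho \in \Irr(\mc S_\phi)$. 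That these two assignments are mutually inverse is immediate from the bijectivity of \eqref{eq:5.4} and of \eqref{eq:Kott}.

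Finally, I would check canonicity: all the ingredients -- the identification \eqref{eq:5.3} of $\mc S_\phi$ as a quotient of $Z(\SL_n(\C))$, the Kottwitz bijection, Lemma~\ref{lem:5.1}, and $\mathrm{rec}_{D,m}$ constructed from $\mathrm{rec}_{F,n_i}$ and the Jacquet--Langlands correspondence in \eqref{eq:5.5}--\eqref{eq:5.4} -- are canonical (the classical LLC for $\GL_n(F)$ being canonical by its characterization via $\varepsilon$- and $\gamma$-factors, and $\JL$ being canonical on essentially square-integrable representations). The main potential obstacle is a bookkeeping issue: choosing the standard inner form $G$ in its equivalence class compatibly with the identification of $H^1(F,\PGL_n)$ with $\Irr(Z(\SL_n(\C)))$. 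Once the convention fixed in \eqref{eq:5.6} and \eqref{eq:5.7} is used on both sides, the two maps match perfectly, giving the claimed canonical bijection.
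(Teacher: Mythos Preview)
Your argument is correct and essentially identical to the paper's in the non-archimedean case, but there is a genuine gap in the archimedean case. The step ``each such character $\chi$ is of the form $\chi_G$ for a uniquely determined equivalence class of inner forms'' is where it breaks down: the parametrization \eqref{eq:5.7} only uses characters of $Z(\SL_n(\C))$ of order at most two, whereas for $n\geq 3$ the group $Z(\SL_n(\C))$ has characters of higher order. So a priori there could be elements of $\Irr(\mc S_\phi)$ that do not correspond to any inner form of $\GL_n(\R)$ or $\GL_n(\C)$, and Lemma~\ref{lem:5.1} alone does not rule this out. The paper explicitly flags this: ``In the archimedean case the above argument does not suffice, because some characters of $Z(\SL_n(\C))$ do not parametrize an inner form of $\GL_n(F)$.''

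What is missing from your proof is the verification that $\mc S_\phi$ is small enough in the archimedean case: trivial for $F=\C$, and of order at most two for $F=\R$, with order exactly two precisely when $\phi$ is relevant for $\GL_{n/2}(\mathbb H)$. The paper supplies this by a direct computation of $C(\phi)$: for $F=\C$ one sees that $Z_{\GL_n(\C)}(\phi)$ is a Levi subgroup, hence $C(\phi)$ is connected; for $F=\R$ one puts $\phi$ in a standard form with image in a product of $\GL_1(\C)$'s and $\GL_2(\C)$'s and reads off the component group. Once this is established, your bijection argument goes through unchanged. Without it, your map from $\Phi^e(\mathrm{inn}\,\GL_n(F))$ to pairs $(G,\pi)$ is not even well-defined over $\R$ or $\C$.
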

\begin{proof}
The elements of $\Phi^e (\text{inn } \GL_n (F))$ with a fixed $\phi \in \Phi (\GL_n (F))$ are
\begin{equation} \label{eq:5.8}
\big\{ (\phi,\chi) : \chi \in \Irr \big( Z (\SL_n (\C)) \big), \ker \chi \supset
Z(\SL_n (\C)) \cap \cent (\phi)^\circ \big \} .
\end{equation}
First we consider the non-archimedean case.
By Lemma \ref{lem:5.1} and \eqref{eq:Kott}, \eqref{eq:5.8} is in bijection with the 
equivalence classes of inner forms $G$ of $\GL_n (F)$ for which $\phi$ is relevant.
Now apply the LLC for $G$ \eqref{eq:5.2}.

In the archimedean case the above argument does not suffice, because some characters
of $Z(\SL_n (\C))$ do not parametrize an inner form of $\GL_n (F)$. We proceed by direct
calculation, inspired by \cite[\S 3]{Lan}.

Suppose that $F = \C$. Then $\mathbf{W}_F = \C^\times$ and im$(\phi)$ is just a real
torus in $\GL_n (\C)$. Hence $Z_{\GL_n (\C)}(\phi)$ is a Levi subgroup of $\GL_n (\C)$
and $C(\phi) = Z_{\SL_n (\C)}(\phi)$ is the corresponding Levi subgroup of $\SL_n (\C)$.
All Levi subgroups of $\SL_n (\C)$ are connected, so $\mc S_\phi = C(\phi) / C(\phi)^\circ 
= 1$. Consequently $\Phi^e (\text{inn } \GL_n (\C)) = \Phi (\GL_n (\C))$, and the theorem
for $F = \C$ reduces to the Langlands correspondence for $\GL_n (\C)$.

Now we take $F = \R$. Recall that its Weil group is defined as
\[
\mathbf{W}_\R = \C^\times \cup \C^\times \tau \text{, where } \tau^2 = -1 \text{ and }
\tau z \tau^{-1} = \overline{z} . 
\]
Let $M$ be a Levi subgroup of $\GL_n (\C)$ which contains the image of $\phi$ and is 
minimal for this property. Then $\phi (\C^\times)$ is contained in a unique maximal
torus $T$ of $M$. By replacing $\phi$ by a conjugate Langlands parameter, 
we can achieve that 
\[
M = \prod\nolimits_{i=1}^n \GL_i (\C)^{n_i}
\] 
is standard and that $T$ is the torus of diagonal
matrices. Then the projection of $\phi (\mathbf{W}_\R)$ on each factor $\GL_i (\C)$ of $M$
has a centralizer in $\GL_i (\C)$ which does not contain any torus larger than $Z(\GL_i (\C))$.
On the other hand $\phi (\tau)$ normalizes $Z_M (\phi (\C^\times)) = T$,
so $Z_{GL_n (\C)}(\phi) = Z_T (\phi (\tau))$. It follows that $n_i = 0$ for $i \geq 2$. 

The projection of $\phi (\tau)$ 
on each factor $\GL_2 (\C)$ of $M$ is either $\matje{0}{1}{-1}{0}$ or $\matje{0}{-1}{1}{0}$. 
Hence $Z_{\GL_n (\C)}(\phi)$ contains the torus
\[
T_\phi := (\C^\times )^{n_1} \times Z (\GL_2 (\C))^{n_2} . 
\]
Suppose that $n_1 > 0$. Then the intersection $T_\phi \cap \SL_n (\C)$ is connected, so 
\[
Z(\SL_n (\C)) / \big( Z(\SL_n (\C)) \cap Z_{\SL_n (\C)}(\phi )^\circ \big) = 1 .
\]
Together with \eqref{eq:5.3} this shows that $\mc S_\phi = 1$ if $n$ is odd or if $n$ is
even and $\phi$ is not relevant for $\GL_{n/2}(\mathbb H)$.

Now suppose $n_1 = 0$. Then $n = 2 n_2 ,\; \phi$ is relevant for $\GL_{n_2}(\mathbb H)$ and
$T_\phi = \big\{ (z_j I_2)_{j=1}^{n_2} : z_j \in \C^\times \big\}$. We see that $T_\phi \cap
\SL_n (\C)$ has two components, determined by whether $\prod_{j=1}^{n_2} z_j$ equals 1 or -1.
Write $\phi = \prod_{j=1}^{n_2} \phi_j$ with $\phi_j \in \Phi (\GL_2 (\R))$. We may assume
that $\phi$ is normalized such that, whenever $\phi_j$ is $\GL_2 (\C)$-conjugate to $\phi_{j'}$,
actually $\phi_{j'} = \phi_j = \phi_k$ for all $k$ between $j$ and $j'$. 
Then $Z_{\Mat_n (\C)} (\phi)$ is isomorphic to a standard Levi subalgebra $A$ 
of $\Mat_{n_2}(\C)$,
via the ring homomorphism
\[
\Mat_{n_2}(\C) \to \Mat_n (\C) = \Mat_{n_2}(\Mat_2 (\C)) \text{ induced 
by } z \mapsto z I_2 . 
\]
Hence $Z_{\SL_n (\C)}(\phi) \cong \{ a \in A : \det (a)^2 = 1 \}$, which clearly has two
components. This shows that $|\mc S_\phi | = [C(\phi) : C(\phi)^\circ] = 2$ if $\phi$ is
relevant for $\GL_{n/2}(\mathbb H)$. 

Thus we checked that for every $\phi \in \Phi (\GL_n (\R)) ,\; \Irr (\mc S_\phi)$ 
parametrizes the equivalence classes of inner forms $G$ of $\GL_n (\R)$ for which $\phi$ is
relevant. To conclude, we apply the LLC for $G$.
\end{proof}

\section{Inner forms of $\SL_n (F)$}
\label{sec:SLn}

As in the previous section, let $D$ be a division algebra over dimension $d^2$ over 
its centre $F$, with reduced norm Nrd$: D \to F$. We write
\[
\GL_m (D)^\sharp := \{ g \in \GL_m (D) : \mr{Nrd}(g) = 1 \} 
\]
Notice that it equals the derived group of $\GL_m (D)$. It is an inner form of
$\SL_{md}(F)$, and every inner form of $\SL_n (F)$ is isomorphic to such a group.
We use the same equivalence relation and parametrization for inner forms of 
$\SL_n (F)$ as for $\GL_n (F)$, as described by \eqref{eq:Kott} and \eqref{eq:5.7}.

As Langlands dual group of $G^\sharp = \GL_m (D)^\sharp$ we take
\[
{}^L G^\sharp = \check G^\sharp = \PGL_n (\C) . 
\]
In particular every Langlands parameter for $G = \GL_m (D)$ gives rise to one for
$G^\sharp$. In line with \cite[\S 10]{Bor}, the L-packets for $G^\sharp$ are 
derived from those for $G$ in the following way. It is known \cite{Wei1} that 
every $\phi^\sharp \in \Phi (G^\sharp)$ lifts to a $\phi \in \Phi (G)$. The L-packet
$\Pi_\phi (G)$ from \eqref{eq:5.2} consists of a single $G$-representation, which
we will denote by the same symbol. Its
restriction to $G^\sharp$ depends only on $\phi^\sharp$, because a different lift
$\phi'$ of $\phi^\sharp$ would produce $\Pi_{\phi'}(G)$ which only differs from
$\Pi_\phi (G)$ by a character of the form 
\[
g \mapsto |\mr{Nrd}(g) |_F^z \quad \text{with } z \in \C . 
\]
We call the restriction of $\Pi_\phi (G)$ to $G^\sharp \; \pi_\phi (G)^\sharp$. 
In general it is reducible, and with it one associates the L-packet
\[
\Pi_{\phi^\sharp} (G^\sharp) := \{ \pi^\sharp \in \Irr (G^\sharp) : \pi^\sharp
\text{ is a constituent of } \pi_\phi (G)^\sharp \} .
\]
The goal of this section is an analogue of Theorem \ref{thm:5.2}.
First we note that every irreducible $G^\sharp$-representation (say $\pi$) is a 
member of an L-packet $\Pi_{\phi^\sharp}(G^\sharp)$, because it appears in a 
$G$-representation (for example in $\mathrm{Ind}_{G^\sharp}^G \pi$). 
Second, by \cite[Lemma 12.1]{HiSa}
two L-packets $\Pi_{\phi_1^\sharp}$ and $\Pi_{\phi_2^\sharp}$ are either disjoint
or equal, and the latter happens if and only if $\phi_1^\sharp$ and $\phi_2^\sharp$
are $\PGL_n (\C)$-conjugate. Thus the main problem is the parametrization of the 
L-packets. Such a parametrization of $\Pi_{\phi^\sharp} (G^\sharp)$ was given
in \cite{HiSa} in terms of S-groups, at least when $F$ has characteristic zero and 
$\phi$ is generic. After recalling this method, we will generalize it. Put
\[
X^G(\Pi_\phi (G)) = \{ \gamma \in \Irr (G / G^\sharp) : \Pi_\phi (G) \otimes \gamma
\cong \Pi_\phi (G) \} .
\]
Notice that every element of $X^G (\Pi_\phi (G))$ is a character, which by Schur's lemma
is trivial on $Z(G)$. Since $G / G^\sharp Z(G)$ is an abelian group and all its elements
have order dividing $n$, the same goes for $X^G (\Pi_\phi (G))$. Moreover 
$X^G (\Pi_\phi (G))$ is finite, as we will see in \eqref{eq:6.1}. On general grounds 
\cite[Lemma 2.4]{HiSa} there exists a 2-cocycle $\kappa_{\phi^\sharp}$ such that 
\begin{equation}\label{eq:6.25}
\C [X^G (\Pi_\phi (G)),\kappa_{\phi^\sharp}] \cong \mathrm{End}_{G^\sharp}(\Pi_\phi (G)) .
\end{equation}
By \cite[Corollary 2.10]{HiSa} the decomposition of $\pi_\phi (G)^\sharp$ as
a representation of $G^\sharp \times X^G (\Pi_\phi (G))$ is
\begin{equation}\label{eq:6.14}
\pi_\phi (G)^\sharp \cong \bigoplus_{\rho \in \Irr (\C [X^G(\Pi_\phi (G)),\kappa_{\phi^\sharp}])}
\Hom_{\C [X^G(\Pi_\phi (G)),\kappa_{\phi^\sharp}]} (\rho,\pi_\phi (G)^\sharp) \otimes \rho .
\end{equation}
The isotropy group of $\phi$ in $C(\phi^\sharp)$ is
\[
C(\phi) = Z(\SL_n (\C)) C(\phi)^\circ = Z(\SL_n (\C)) C(\phi^\sharp)^\circ .
\]
We also note that
\begin{equation}\label{eq:6.2}
\begin{aligned}
& C(\phi^\sharp) / C(\phi) \cong \mc S_{\phi^\sharp} / \mc Z_{\phi^\sharp} , \quad
\text{where} \\
& \mc Z_{\phi^\sharp} = Z(\SL_n (\C)) C(\phi^\sharp)^\circ / C(\phi^\sharp)^\circ \cong
Z(\SL_n (\C)) / Z(\SL_n (\C)) \cap C(\phi^\sharp)^\circ .
\end{aligned}
\end{equation}
Assume for the moment that $D \not\cong \mathbb H$, so Nrd$: D \to F$ is surjective by
\cite[Proposition X.2.6]{Wei2}. Let 
$\hat \gamma : \mathbf W_F \to \C^\times \cong Z(\GL_n (\C))$ correspond to 
$\gamma \in \Irr (F^\times) \cong \Irr (G / G^\sharp)$
via local class field theory. By the LLC for $G ,\; \phi$ is 
$\GL_n (\C)$-conjugate to $\phi \hat \gamma$ for all $\gamma \in X^G (\Pi_\phi (G))$.
As $(\phi \hat \gamma )^\sharp = \phi^\sharp ,\; \phi$ and $\phi \hat \gamma$ are in fact
conjugate by an element of $C(\phi^\sharp) \subset \SL_n (\C)$. 
This gives an isomorphism 
\begin{equation}\label{eq:6.1}
C(\phi^\sharp) / C(\phi) \cong X^G (\Pi_\phi (G)) , 
\end{equation}
showing in particular that the left hand side is abelian. 
Since $C(\phi^\sharp) / C(\phi)$
is the component group of the centralizer of the subset im$(\phi^\sharp)$ of the algebraic
group $\PGL_n (\C)$, the groups in \eqref{eq:6.1} are finite.
Thus we obtain a central extension of finite groups
\begin{equation}\label{eq:6.4}
1 \to \mc Z_{\phi^\sharp} \to \mc S_{\phi^\sharp} \to X^G (\Pi_\phi (G)) \to 1 .
\end{equation}
The algebra \eqref{eq:6.25} can be described with the idempotent
\[
e_{\chi_G} := |\mc Z_{\phi^\sharp}|^{-1} \sum\nolimits_{z \in \mc Z_{\phi^\sharp}}
\chi_G (z^{-1}) z \; \in \; \C [\mc Z_{\phi^\sharp}] .
\]

\begin{thm}\label{thm:6.3}
Let $G = \GL_m (D)$ with $D \not\cong \mathbb H$.
There exists an isomorphism 
\[
\C [X^G (\Pi_\phi (G)),\kappa_{\phi^\sharp}] = \C [\mc S_{\phi^\sharp} / \mc Z_{\phi^\sharp},
\kappa_{\phi^\sharp}] \cong e_{\chi_G} \C[ \mc S_{\phi^\sharp}]
\]
such that for any $s \in \mc S_{\phi^\sharp}$ the subspaces $\C s \mc Z_{\phi^\sharp}$
on both sides correspond. Moreover any two such isomorphisms differ only by
a character of $\mc S_{\phi^\sharp} / \mc Z_{\phi^\sharp}$.
\end{thm}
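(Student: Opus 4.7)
The plan is to reduce to a standard identification of $e_{\chi_G} \C[\mc S_{\phi^\sharp}]$ with a twisted group algebra of the quotient $\mc S_{\phi^\sharp}/\mc Z_{\phi^\sharp}$, and then verify that the resulting 2-cocycle coincides with $\kappa_{\phi^\sharp}$. I would begin by choosing a set-theoretic section $\sigma : X^G(\Pi_\phi(G)) \to \mc S_{\phi^\sharp}$ of the central extension \eqref{eq:6.4}, so that $\sigma(\gamma)\sigma(\gamma') = \kappa_\sigma(\gamma,\gamma')\,\sigma(\gamma\gamma')$ with $\kappa_\sigma$ valued in $\mc Z_{\phi^\sharp}$. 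Since $\mc Z_{\phi^\sharp}$ is central, $e_{\chi_G} z = \chi_G(z)\, e_{\chi_G}$ for all $z \in \mc Z_{\phi^\sharp}$, so the elements $\{ e_{\chi_G} \sigma(\gamma) \}_\gamma$ form a $\C$-basis of $e_{\chi_G} \C[\mc S_{\phi^\sharp}]$, each one spanning precisely the line $e_{\chi_G} \C[\sigma(\gamma) \mc Z_{\phi^\sharp}]$, and they multiply by $\chi_G \circ \kappa_\sigma$. Setting $T_\gamma \mapsto e_{\chi_G}\sigma(\gamma)$ then gives a coset-preserving algebra isomorphism
\[
\C[X^G(\Pi_\phi(G)),\, \chi_G \circ \kappa_\sigma] \;\isom\; e_{\chi_G} \C[\mc S_{\phi^\sharp}] .
\]

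The crux is then to prove $\kappa_{\phi^\sharp} = \chi_G \circ \kappa_\sigma$, and in fact to realize this equality on the nose for a compatible choice of intertwiners and section. Here I would use the isomorphism \eqref{eq:6.1}: each $\gamma \in X^G(\Pi_\phi(G))$ comes from a coset $\dot\gamma\, C(\phi) \subset C(\phi^\sharp)$, with $\dot\gamma \in \SL_n(\C)$ conjugating $\phi$ to $\phi\hat\gamma$. The plan is to define $I_\gamma : \Pi_\phi(G)\otimes\gamma \isom \Pi_\phi(G)$ from $\dot\gamma$ through the LLC for $\GL_m(D)$ of \S\ref{sec:GLn}, so that $\dot\gamma \mapsto I_\gamma$ is multiplicative, while the indeterminacy $C(\phi) = Z(\SL_n(\C)) C(\phi)^\circ$ in the lift rescales $I_\gamma$ by $\chi_G$ applied to the $Z(\SL_n(\C))$-component. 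This is precisely the mechanism by which the character $\chi_G$ attached to the inner form $G$ by \eqref{eq:Kott}/\eqref{eq:5.7} enters: a change $\dot\gamma \mapsto z\dot\gamma$ with $z \in Z(\SL_n(\C))$ multiplies $I_\gamma$ by $\chi_G(z)$. Comparing $I_\gamma I_{\gamma'}$ with $\dot\gamma \dot{\gamma'} = \kappa_\sigma(\gamma,\gamma')\,\dot{\gamma\gamma'}$ then yields $\kappa_{\phi^\sharp}(\gamma,\gamma') = \chi_G(\kappa_\sigma(\gamma,\gamma'))$. I expect this to be the main technical obstacle, because pinning down the intertwiner $I_\gamma$ from $\dot\gamma$ requires tracking the LLC through parabolic induction, the Langlands quotient, and the Jacquet--Langlands correspondence; the cleanest strategy is to handle the essentially square-integrable case first (where $I_\gamma$ can be read off explicitly after transporting through $\JL$ to the split case, in the spirit of \cite{HiSa}) and then extend to arbitrary $\phi \in \Phi(G)$ using that $I_P^G$ is irreducible for inner forms of $\GL_n(F)$.

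For the final clause, any two coset-preserving algebra isomorphisms $f_1, f_2$ differ by an automorphism $f_2 f_1^{-1}$ of $\C[X^G(\Pi_\phi(G)), \kappa_{\phi^\sharp}]$ which preserves each one-dimensional subspace $\C\, T_\gamma$. It therefore acts on the $\gamma$-line by a scalar $\lambda(\gamma) \in \C^\times$, and multiplicativity of $f_1, f_2$ forces $\lambda(\gamma\gamma') = \lambda(\gamma)\lambda(\gamma')$. Hence $\lambda$ is a character of $X^G(\Pi_\phi(G)) = \mc S_{\phi^\sharp}/\mc Z_{\phi^\sharp}$, as required.
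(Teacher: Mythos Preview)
Your reduction to proving $\kappa_{\phi^\sharp} = \chi_G \circ \kappa_\sigma$ (in $H^2$) is correct and standard, and your uniqueness argument at the end is fine. The gap is in the middle step, where you propose to ``define $I_\gamma$ from $\dot\gamma$ through the LLC for $\GL_m(D)$ so that $\dot\gamma \mapsto I_\gamma$ is multiplicative, while the indeterminacy $C(\phi) = Z(\SL_n(\C)) C(\phi)^\circ$ in the lift rescales $I_\gamma$ by $\chi_G$.'' This is not something the LLC hands you: $\mathrm{rec}_{D,m}$ is a bijection of isomorphism classes, and it does not come equipped with a functorial assignment of intertwiners $\Pi_\phi(G)\otimes\gamma \isom \Pi_\phi(G)$ depending multiplicatively on a lift $\dot\gamma \in C(\phi^\sharp)$. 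Producing such a normalization of the $I_\gamma$, and in particular showing that the ambiguity in $\dot\gamma$ by $Z(\SL_n(\C))$ acts via the specific character $\chi_G$ attached to the inner form, is exactly the content of the theorem. In the paper this is not argued directly: for $\mathrm{char}(F)=0$ and $\phi$ tempered it is quoted from \cite[Lemma~12.5]{HiSa} (with the explicit warning that the proof there uses endoscopic transfer and global arguments), and then extended to arbitrary $\phi$ by a deformation argument along unramified twists $\phi_\chi$ with constant $C(\phi_\chi)$ and $C(\phi_\chi^\sharp)$. Your sketch ``read off $I_\gamma$ explicitly after transporting through $\JL$ to the split case'' glosses over precisely the place where \cite{HiSa} needs Whittaker normalization on the split side and endoscopic character identities to pass back to the inner form; there is no known elementary substitute.

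There is a second omission: you do not address $\mathrm{char}(F)>0$ at all. The paper's proof in that case is a separate argument, carried out in Section~\ref{sec:CL}, which uses the method of close fields (Theorems~\ref{thm:A.1} and \ref{thm:8.2}) to transport the cocycle $\kappa_{\phi^\sharp}$ and the groups $\mc S_{\phi^\sharp}, \mc Z_{\phi^\sharp}$ unchanged to a $p$-adic field $\widetilde F$, where the characteristic-zero case already applies. Any complete proof has to supply this reduction or an alternative.
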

\begin{proof}
\emph{(of the case $\char(F) = 0$.)}\\
First we suppose that char$(F) = 0$ and that the representation $\Pi_\phi (G)$
is tempered. In the archimedean case the cocycle $\kappa_{\phi^\sharp}$ is trivial by 
\cite[Lemma 3.1 and page 69]{HiSa}. In the non-archimedean case the theorem is a 
reformulation of \cite[Lemma 12.5]{HiSa}. We remark that this is a deep result, 
its proof makes use of endoscopic transfer and global arguments. 

Consider a possibly unbounded Langlands parameter $\phi^\sharp \in \Phi (G^\sharp)$, 
with a lift $\phi \in \Phi (G)$. Let $Y$ be a connected set of unramified twists 
$\phi_\chi$ of $\phi$, such that $C(\phi_\chi) = C(\phi)$ and 
$C(\phi_\chi^\sharp) = C(\phi^\sharp)$ for all $\phi_\chi \in Y$. It is easily seen that 
we can always arrange that $Y$ contains bounded Langlands parameters. The reason is that 
for any element (here the image of a Frobenius element of $\mathbf{W}_F$ under $\phi$)
 of a torus in a complex reductive group, there is an element 
of the maximal compact subtorus which has the same centralizer.

The construction of the intertwining operators 
\begin{equation}
I_\gamma \in \Hom_G (\Pi_\phi (G),\Pi_\phi (G) \otimes \gamma) \text{ for }
\gamma \in X^G (\Pi_\phi (G)) 
\end{equation}
is similar to that for R-groups. It determines the cocycle by
\[
I_\gamma I_{\gamma'} = \kappa_{\phi^\sharp}(\gamma,\gamma') I_{\gamma \gamma'} .
\]
The $I_\gamma$ can be
chosen independently of $\chi \in X_{nr}(M)$, so the $\kappa_{\phi_\chi^\sharp}$ do
not depend on $\chi$. For $\phi_\chi^\sharp$ tempered we already have the required
algebra isomorphisms, and now they extend by constancy to all $\phi_\chi^\sharp \in Y$.
This concludes the proof in the case char$(F) = 0$.
\end{proof}

The proof of the case char$(F) > 0$ requires more techniques, we complete it in
Section \ref{sec:CL}. 

\smallskip

For a character $\chi$ of $\mc Z_{\phi^\sharp}$ or of $Z(\SL_n (\C))$ we write
\begin{equation}\label{eq:6.5}
\Irr (\mc S_{\phi^\sharp},\chi) := \Irr \big( e_\chi \C[ \mc S_{\phi^\sharp}] \big) =
\{ (\pi,V) \in \Irr (\mc S_{\phi^\sharp}) : 
\mc Z_{\phi^\sharp}  \text{ acts on } V \text{ as } \chi \} .
\end{equation}
We will use this with the characters $\chi_G = \chi_{G^\sharp}$ from Lemma \ref{lem:5.1}. 

We still assume that $D \not\cong \mathbb H$. As shown in \cite[Corollary 2.10]{HiSa}, 
the isomorphism \eqref{eq:6.25} and Theorem \ref{thm:6.3} imply that 
\begin{equation}\label{eq:6.7}
\pi (\phi^\sharp,\rho) := \mathrm{Hom}_{\mc S_{\phi^\sharp}}(\rho,\Pi_\phi (G))
\end{equation}
defines an irreducible $G^\sharp$-representation for every $\rho \in 
\Irr (\mc S_{\phi^\sharp},\chi_{G^\sharp})$. 
In general $\pi (\phi^\sharp,\rho)$ is
not canonical, it depends on the choice of an algebra isomorphism as in Theorem 
\ref{thm:6.3}. Hence the map $\rho \mapsto \pi (\phi^\sharp,\rho)$ is canonical up to
an action of 
\[
\Irr \big( \mc S_{\phi^\sharp} / \mc Z_{\phi^\sharp} \big) \cong 
\Irr \big( X^G (\Pi_\phi (G)) \big)
\]
on $\Irr \big( e_{\chi_G} \C[ \mc S_{\phi^\sharp}] \big)$. Via \eqref{eq:6.14} and
Theorem \ref{thm:6.3} this corresponds to an action of $\Irr \big( X^G (\Pi_\phi (G)) \big)$ 
on $\Pi_{\phi^\sharp}(G)$, which can be described explicitly.
Since $X^G (\Pi_\phi (G))$ is a subgroup of $\Irr \big( G / G^\sharp Z(G) \big)$,
$\Irr \big( X^G (\Pi_\phi (G)) \big)$ is a quotient of 
$G / G^\sharp Z(G)$, say $G / H$ for some $H \supset G^\sharp Z(G)$. 
This means that every $c \in \Irr \big( X^G (\Pi_\phi (G)) \big)$ determines a coset 
$g_c H$ in $G$. Now the formula
\begin{equation}\label{eq:6.3}
c \cdot \pi = g_c \cdot \pi \text{, where } (g_c \cdot \pi)(g) = \pi (g_c^{-1} g g_c) 
\end{equation}
defines the action of $\Irr \big( X^G (\Pi_\phi (G)) \big)$ on 
$\Pi_{\phi^\sharp}(G^\sharp)$. In other words, the representation $\pi (\phi^\sharp, \rho)
\in \Pi_{\phi^\sharp}(G^\sharp)$ is canonical up to the action of $G$ on 
$G^\sharp$-representations.

For $D = \mathbb H$ some modifications must be made. In that case $G = G^\sharp Z(G)$, so 
$\mathrm{Res}_{G^\sharp}^{G}$ preserves irreducibility of representations and 
$X^G (\Pi_\phi (G)) = 1$. Moreover $G / G^\sharp \cong \R^\times_{>0} \not\cong \R^\times$,
which causes \eqref{eq:6.1} and \eqref{eq:6.4} to be invalid for $D = \mathbb H$.
However, \eqref{eq:6.5} still makes sense, so we define
\begin{equation}\label{eq:6.6}
\pi (\phi^\sharp,\rho) := \Pi_\phi (\GL_m (\mathbb H)) \quad \text{for all} \quad
\rho \in \Irr \big( \mc S_{\phi^\sharp},\chi_{\mathbb H^\times} \big) .
\end{equation}

As mentioned before, Hiraga and Saito \cite{HiSa} have established the local Langlands
correspondence for irreducible generic representations of inner forms of $\SL_n (F)$,
where $F$ is a local field of characteristic zero. We will generalize this on the one 
hand to local fields $F$ of arbitrary characteristic and on the other 
hand to all irreducible admissible representations. We will do so for all inner forms
of $\SL_n (F)$ simultaneously, to obtain an analogue of Theorem \ref{thm:5.2}.

Like for $\GL_n (F)$ we define
\[
\Phi^e (\text{inn } \SL_n (F)) = \big\{ (\phi^\sharp ,\rho) : \phi^\sharp \in
\Phi (\SL_n (F)), \rho \in \Irr (\mc S_{\phi^\sharp}) \big\} .
\]
Notice that the restriction of $\rho$ to 
$\mc Z_{\phi^\sharp} \cong Z(\SL_n (\C)) / Z(\SL_n (\C)) \cap C(\phi^\sharp)^\circ$
determines an inner form $G_\rho$ of $\GL_n (F)$ (up to isomorphism) via \eqref{eq:Kott} 
and Lemma \ref{lem:5.1}. Its derived group $G_\rho^\sharp$ is the inner form of 
$\SL_n (F)$ associated to $\rho$. 

We note that the actions of $\PGL_n (\C)$ on the various $\Phi^e (G^\sharp)$ combine to
an action on $\Phi^e (\text{inn } \SL_n (F))$. With the collection of equivalence classes 
$\Phi^e (\text{inn } \SL_n (F))$ we 
can formulate the local Langlands correspondence for all such inner forms simultaneously.

First we consider the non-archimedean case.
As for $\GL_n (F)$, we must fix one group in every equivalence class of inner forms.
We choose the groups $\GL_m ( [L/F,\chi,\varpi_F] )^\sharp$ with $[L/F,\chi,\varpi_F]$ as 
in \eqref{eq:5.6}, and call these the standard inner forms of $\SL_n (F)$.

\begin{thm}\label{thm:6.5}
Let $F$ be a non-archimedean local field. There exists a bijection
\[
\begin{array}{lll}
\Phi^e (\mr{inn } \; \SL_n (F)) & \to & \big\{ (G^\sharp,\pi) : G^\sharp \text{ standard
inner form of } \SL_n (F), \pi \in \Irr (G^\sharp) \big\} \\
(\phi^\sharp,\rho) & \mapsto & \big( G_\rho^\sharp,\pi (\phi^\sharp,\rho) \big) 
\end{array}
\]
with the following properties:
\enuma{
\item Suppose that $\rho$ sends $\exp(2 \pi i/n) \in Z(\SL_n (\C))$ to a primitive 
$d$-th root of unity $z$. Then $G_\rho^\sharp = \GL_m ( [L/F,\chi,\varpi_F] )^\sharp$, where 
$md = n$ and $\chi : \mr{Gal}(L/F) \to \C^\times$ sends the Frobenius automorphism to $z$.
\item Suppose that $\phi^\sharp$ is relevant for $G^\sharp$ and lifts to $\phi \in \Phi (G)$.
Then the restriction of $\Pi_\phi (G)$ to $G^\sharp$ is $\bigoplus_{\rho \in 
\Irr (\mc S_{\phi^\sharp},\chi_{G^\sharp})} \pi (\phi^\sharp,\rho) \otimes \rho$.
\item $\pi (\phi^\sharp,\rho)$ is essentially square-integrable if and only if 
$\phi^\sharp (\mathbf W_F \times \SL_2 (\C))$ is not contained in any proper parabolic
subgroup of $\PGL_n (\C))$.
\item $\pi (\phi^\sharp,\rho)$ is tempered if and only if $\phi^\sharp$ is bounded.
}
\end{thm}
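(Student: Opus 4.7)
My plan is to construct the bijection explicitly by sending $(\phi^\sharp,\rho)$ to the pair $(G_\rho^\sharp, \pi(\phi^\sharp,\rho))$, where $G_\rho^\sharp$ is determined by $\rho|_{\mc Z_{\phi^\sharp}}$ via \eqref{eq:Kott} and Lemma \ref{lem:5.1}, and $\pi(\phi^\sharp,\rho)$ is as in \eqref{eq:6.7}. I would then verify (a)--(d) in turn and conclude bijectivity from them.

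For (a), the character $\rho|_{\mc Z_{\phi^\sharp}}$ is trivial on $Z(\SL_n(\C)) \cap C(\phi^\sharp)^\circ$, so by \eqref{eq:6.2} it extends canonically to a character of $Z(\SL_n(\C))$ vanishing on that intersection. By Lemma \ref{lem:5.1}, this character corresponds under \eqref{eq:Kott} to an inner form $G$ of $\GL_n(F)$ for which $\phi^\sharp$ is relevant. Evaluating the character at $\exp(2\pi\sqrt{-1}/n)$ identifies it with the Hasse invariant $h(D)$; if this is a primitive $d$-th root of unity $z$, then $\dim_F D = d^2$, $m = n/d$, and the cyclic-algebra presentation from Section \ref{sec:GLn} forces $D = [L/F,\chi,\varpi_F]$ with $\chi$ sending Frobenius to $z$, as required.

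For (b), I apply $\Hom_{\mc S_{\phi^\sharp}}(\rho,-)$ to the decomposition \eqref{eq:6.14} while identifying the twisted group algebra $\C[X^G(\Pi_\phi(G)),\kappa_{\phi^\sharp}]$ with $e_{\chi_{G^\sharp}}\C[\mc S_{\phi^\sharp}]$ via Theorem \ref{thm:6.3}. This yields the stated formula with the sum running over $\Irr(\mc S_{\phi^\sharp},\chi_{G^\sharp})$. Irreducibility and mutual non-isomorphism of the $\pi(\phi^\sharp,\rho)$ for distinct $\rho \in \Irr(\mc S_{\phi^\sharp},\chi_{G^\sharp})$ is then a direct consequence of \eqref{eq:6.25} and \eqref{eq:6.14}, giving injectivity of $\rho \mapsto \pi(\phi^\sharp,\rho)$ on each L-packet. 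Disjointness of L-packets attached to non-$\PGL_n(\C)$-conjugate $\phi^\sharp$ is supplied by \cite[Lemma 12.1]{HiSa}. For surjectivity, any $\pi \in \Irr(G^\sharp)$ appears in $\mr{Ind}_{G^\sharp}^G \pi$ for the (unique) ambient inner form $G$ containing $G^\sharp$, hence in some $\Pi_\phi(G)|_{G^\sharp}$, and property (a) then forces the corresponding $(\phi^\sharp,\rho)$ to satisfy $G_\rho^\sharp = G^\sharp$.

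For (c) and (d), I would transfer the corresponding statements from $G = \GL_m(D)$: by the construction \eqref{eq:5.2} and the remarks after it, $\Pi_\phi(G)$ is essentially square-integrable iff the image of $\phi$ lies in no proper Levi subgroup of $\GL_n(\C)$, and tempered iff $\phi$ is bounded. Both conditions descend to $\phi^\sharp$ because Levi subgroups of $\PGL_n(\C)$ correspond to those of $\GL_n(\C)$ containing the centre, and both are stable under the unramified twists that parametrize the lifts $\phi$ of a given $\phi^\sharp$. Restriction from $G$ to $G^\sharp$ preserves essential square-integrability modulo centre and temperedness for each irreducible constituent, so (c) and (d) follow. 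The main obstacle throughout is the case $\char(F) > 0$ of Theorem \ref{thm:6.3} itself, on which the very definition of $\pi(\phi^\sharp,\rho)$ depends; this is precisely the step deferred to Section \ref{sec:CL}, where the method of close fields is used to transfer both the cocycle $\kappa_{\phi^\sharp}$ and the required algebra isomorphism from characteristic zero.
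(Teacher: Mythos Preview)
Your treatment of (a), (b), and the bijectivity matches the paper's argument essentially line for line, including the appeal to \cite[Lemma 12.1]{HiSa} for disjointness of L-packets and the use of $\mathrm{Ind}_{G^\sharp}^G$ for surjectivity.

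There is, however, a genuine gap in your handling of (c) and (d). You only argue the forward implications: if $\phi^\sharp$ is bounded (resp.\ elliptic) then $\pi(\phi^\sharp,\rho)$ is tempered (resp.\ essentially square-integrable). For the converse, your sentence ``both are stable under the unramified twists that parametrize the lifts $\phi$ of a given $\phi^\sharp$'' is incorrect on two counts. First, lifts of $\phi^\sharp$ differ by \emph{arbitrary} characters $\mathbf{W}_F \to Z(\GL_n(\C))$, not merely unramified ones. Second, boundedness of $\phi$ is \emph{not} stable under such twists: a bounded $\phi$ twisted by an unbounded central character becomes unbounded while $\phi^\sharp$ is unchanged. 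So from ``$\pi(\phi^\sharp,\rho)$ tempered'' you cannot immediately read off that any particular lift $\phi$ is bounded.

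The paper supplies the missing step as follows. Given $\pi(\phi^\sharp,\rho)$ tempered, choose the lift $\phi$ so that $\Pi_\phi(G)$ has \emph{unitary} central character. Since $\pi(\phi^\sharp,\rho)$ generates $\Pi_\phi(G)$ as a $G$-representation, all matrix coefficients of $\Pi_\phi(G)$ are tempered on $G^\sharp Z(G)$. Because $G^\sharp Z(G)$ has finite index in $G$, this forces $\Pi_\phi(G)$ itself to be tempered, whence $\phi$ is bounded by the LLC for $G$, and therefore $\phi^\sharp$ is bounded. The same argument, mutatis mutandis, handles the converse in (c). You should replace your stability-under-twists claim with this finite-index argument.
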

\begin{proof}
Let $\phi^\sharp \in \Phi (\SL_n (F))$ and lift it to $\phi \in \Phi (\GL_n (F))$. Then
$C(\phi^\sharp)^\circ = C(\phi)^\circ$ and $\mc Z_{\phi^\sharp} = \mc Z_\phi$, so by 
Lemma \ref{lem:5.1} the set of standard
inner forms of $\SL_n (F)$ for which $\phi^\sharp$ is relevant is in natural bijection with
\[
\Irr (\mc Z_{\phi^\sharp}) = 
\Irr \big( Z(\SL_n (\C)) / Z(\SL_n (\C)) \cap C(\phi^\sharp)^\circ \big) .
\]
Hence the collection of $(\phi^\sharp,\rho) \in \Phi^e (\text{inn } \SL_n (F))$ with 
$\phi^\sharp$ fixed is
\begin{equation}\label{eq:6.8}
\big\{ (\phi^\sharp,\rho) : \rho \in \Irr (\mc S_{\phi^\sharp},\chi_{G^\sharp}) 
\text{ with } \phi^\sharp \text{ relevant for } G^\sharp \big\} .
\end{equation}
Thus (a) automatically holds. Part (b) is a consequence of \eqref{eq:6.25}
and Theorem \ref{thm:6.3}, see \cite[Corollary 2.10]{HiSa}. Together with the remarks
at the beginning of the section this shows that the map from the theorem is bijective.

For part (d), it is clear that the restriction of a tempered $G$-representation to
$G^\sharp$ is still tempered. Hence $\pi (\phi^\sharp,\rho)$ is tempered if $\phi^\sharp$
has a bounded lift $\phi$, that is, if $\phi^\sharp$ is itself bounded. Conversely,
if $\pi (\phi^\sharp,\rho)$ is tempered, then all its matrix coefficients are tempered
on $G^\sharp$. Lift $\phi^\sharp$ to $\phi \in \Phi (G)$ such that the central
character of $\Pi_\phi (G)$ is unitary. Since $\pi (\phi^\sharp,\rho)$ generates 
$\Pi_\phi (G)$ as a $G$-representation, all matrix coefficients of $\pi_\phi (G)$
are tempered on $G^\sharp Z(G)$. As $G^\sharp Z(G)$ is of finite index in $G$, this
implies that $\Pi_\phi (G)$ is tempered. One of the properties of the LLC for $G$
says that temperedness of $\Pi_\phi (G)$ is equivalent to boundedness of $\phi$.
Therefore $\phi^\sharp$ is also bounded.

An analogous argument applies to (essentially) square-integrable representations.
These arguments prove parts (c) and (d). 
\end{proof}

Let us discuss an archimedean analogue of Theorem \ref{thm:6.5}, that is,
for the groups $\SL_n (\C), \SL_n (\R)$ and $\SL_m (\mathbb H)$. In view \eqref{eq:6.6}
we cannot expect a bijection, and part (b) has to be adjusted.

\begin{thm}\label{thm:6.6}
Let $F$ be $\R$ or $\C$. There exists a canonical surjection
\[
\begin{array}{lll}
\Phi^e (\mr{inn } \; \SL_n (F)) & \to & \big\{ (G^\sharp,\pi) : G^\sharp \text{ standard
inner form of } \SL_n (F), \pi \in \Irr (G^\sharp) \big\} \\
(\phi^\sharp,\rho) & \mapsto & \big( G_\rho^\sharp,\pi (\phi^\sharp,\rho) \big) 
\end{array}
\]
with the following properties:
\enuma{
\item The preimage of $\Irr (\SL_n (F))$ consists of the $(\phi^\sharp,\rho)$ with
$\mc Z_{\phi^\sharp} \subset \ker \rho$, and the map is injective on this domain.
The preimage of $\Irr (\SL_{n/2}(\mathbb H))$ consists of the $(\phi^\sharp,\rho)$
such that $\rho$ is not trivial on $\mc Z_{\phi^\sharp}$, and the map is two-to-one
on this domain.
\item Suppose that $\phi^\sharp$ is relevant for $G^\sharp = \SL_m (D)$ and lifts to 
$\phi \in \Phi (G)$. Then the restriction of $\Pi_\phi (G)$ to $G^\sharp$ is irreducible
if $D = \C$ or $D = \mathbb H$, and is isomorphic to $\bigoplus_{\rho \in 
\Irr (\mc S_{\phi^\sharp} / \mc Z_{\phi^\sharp})} \pi (\phi^\sharp,\rho) \otimes \rho$
in case $D = \R$.
\item $\pi (\phi^\sharp,\rho)$ is essentially square-integrable if and only if 
$\phi^\sharp (\mathbf W_F)$ is not contained in any proper parabolic
subgroup of $\PGL_n (\C))$.
\item $\pi (\phi^\sharp,\rho)$ is tempered if and only if $\phi^\sharp$ is bounded.
}
\end{thm}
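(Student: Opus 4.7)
The plan is to follow the proof of Theorem~\ref{thm:6.5} as closely as possible, deviating only to accommodate the archimedean peculiarities --- chiefly the quaternionic anomaly. Given $(\phi^\sharp,\rho) \in \Phi^e(\mathrm{inn}\;\SL_n(F))$, lift $\phi^\sharp$ to some $\phi \in \Phi(\GL_n(F))$, attach the inner form $G_\rho$ of $\GL_n(F)$ to $\rho|_{\mc Z_{\phi^\sharp}}$ via \eqref{eq:5.7}, and put $G_\rho^\sharp = (G_\rho)_{\der}$. Define $\pi(\phi^\sharp,\rho)$ by \eqref{eq:6.7} whenever $G_\rho \neq \GL_{n/2}(\mathbb H)$ and by \eqref{eq:6.6} otherwise. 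Both constructions rest on \eqref{eq:6.14} and on Theorem~\ref{thm:6.3}, the latter being unconditional in the archimedean case because the cocycle $\kappa_{\phi^\sharp}$ is trivial there (this is the input from \cite[Lem.~3.1 and p.~69]{HiSa} already invoked in the $\char(F)=0$ part of the proof of Theorem~\ref{thm:6.3}). This immediately produces the restriction decompositions asserted in part~(b) for each of $D=\R,\C,\mathbb H$.

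For the fibre analysis in part~(a), recycle the explicit centralizer computation from the proof of Theorem~\ref{thm:5.2}. If $F=\C$ then $\GL_n(\C) = \SL_n(\C)\cdot Z(\GL_n(\C))$, so restriction preserves irreducibility; combined with the triviality of $\mc S_\phi$ established in Theorem~\ref{thm:5.2}, this settles the complex case. If $F=\R$, split according to whether the integer $n_1$ from the proof of Theorem~\ref{thm:5.2} is positive or zero. In the first case the same argument gives $\mc Z_{\phi^\sharp}=1$, so $\phi^\sharp$ is relevant only for $\SL_n(\R)$, every admissible $\rho$ satisfies $\mc Z_{\phi^\sharp}\subset\ker\rho$, and injectivity is inherited from the $\GL_n(\R)$-LLC via Theorem~\ref{thm:5.2}. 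In the second case $\mc Z_{\phi^\sharp}\cong\Z/2\Z$, and its two characters correspond under \eqref{eq:5.7} to the two inner forms $\GL_n(\R)$ and $\GL_{n/2}(\mathbb H)$: the trivial restriction yields $G_\rho^\sharp=\SL_n(\R)$, the nontrivial one yields $G_\rho^\sharp=\SL_{n/2}(\mathbb H)$. This is exactly the dichotomy stated in (a).

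The crux of the argument, and the source of the two-to-one failure of bijectivity, is the $\mathbb H$-branch. Since $\mathrm{Nrd}(\mathbb H^\times)=\R_{>0}\neq\R^\times$, one has $\GL_m(\mathbb H) = \GL_m(\mathbb H)^\sharp \cdot Z(\GL_m(\mathbb H))$, so restriction preserves irreducibility and $X^G(\Pi_\phi(G))=1$. Consequently \eqref{eq:6.1}--\eqref{eq:6.4} break down and the definition \eqref{eq:6.6} sends every $\rho \in \Irr(\mc S_{\phi^\sharp},\chi_{\mathbb H^\times})$ to a single $G^\sharp$-representation. The main technical step I expect to grind through is showing that this latter set has cardinality exactly two --- equivalently, that $|\mc S_{\phi^\sharp}/\mc Z_{\phi^\sharp}|=2$ whenever $\phi^\sharp$ is relevant for $\GL_{n/2}(\mathbb H)$. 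This should follow by extending the block-matrix centralizer analysis of $C(\phi)$ in Theorem~\ref{thm:5.2} from $\SL_n(\C)$-centralizers of $\phi$ to those of $\phi^\sharp$: the extra connected component is supplied by an element of $\SL_n(\C)$ implementing the twist $\phi \mapsto \phi \otimes \mathrm{sgn}$ by the nontrivial character of $\mathbf W_\R/\C^\times$. Finally, properties (c) and (d) transfer verbatim from Theorem~\ref{thm:6.5}: the arguments there relied only on the finite-index inclusion $G^\sharp Z(G) \subset G$ (true for $D=\R,\C,\mathbb H$) and on the temperedness and essential-square-integrability properties of the $\GL_m(D)$-LLC, which hold in the archimedean setting as recorded in the discussion preceding \eqref{eq:5.4}.
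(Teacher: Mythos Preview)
Your proposal is correct and follows essentially the same route as the paper's proof: reduce to the non-archimedean argument of Theorem~\ref{thm:6.5} wherever possible, use $G = G^\sharp Z(G)$ to get irreducibility of restriction for $D=\C$ and $D=\mathbb H$, and establish the two-to-one behaviour by computing $|\mc S_{\phi^\sharp}/\mc Z_{\phi^\sharp}|=2$ via the sign-twist $\phi \mapsto \phi \otimes \mathrm{sgn}$ on $\mathbf W_\R / \C^\times$ (the paper carries this out concretely, exhibiting $\mathrm{diag}(1,-1,\ldots,1,-1)$ as the conjugating element). The only point you pass over is the word \emph{canonical} in the statement: the paper notes that for $\SL_n(F)$ one normalizes the intertwining operators via Whittaker functionals \cite[p.~17 and p.~69]{HiSa}, while for $\SL_m(\mathbb H)$ the definition \eqref{eq:6.6} involves no choice at all.
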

\begin{proof}
Theorem \ref{thm:5.2} and the start of the proof of Theorem \ref{thm:6.5} show that 
\eqref{eq:6.8} is also valid in the archimedean case. To see that the map thus obtained
is canonical, we will of course use that the LLC for $\GL_m (D)$ is so. For $\SL_n (F)$
the intertwining operators admit a canonical normalization in terms of Whittaker functionals
\cite[pages 17 and 69]{HiSa}, so the definition \eqref{eq:6.7} of $\pi (\phi^\sharp,\rho)$
can be made canonical. For $\SL_m (\mathbb H)$ the definition \eqref{eq:6.6} clearly leaves
no room for arbitrary choices.

Part (a) and part (b) for $D = \R$ follow as in the non-archimedean case, except that for 
$D = \mathbb H$ the preimage of $\pi (\phi^\sharp,\rho)$ is in bijection with $\Irr \big( 
\mc S_\phi, e_{\mathbb H^\times} \big)$. To prove part (b) for $D = \C$ and $D = \mathbb H$,
it suffices to remark that $\mathrm{Res}_{G^\sharp}^G$ preserves irreducibility, as
$G = G^\sharp Z(G)$. The proof of part (c) and (d) carries over from Theorem \ref{thm:6.5}.

It remains to check that the map is two-to-one on $\Irr (\SL_m (\mathbb H))$. For this we have
to compute
\begin{equation}\label{eq:6.9}
\mc S_{\phi^\sharp} / \mc Z_{\phi^\sharp} = C(\phi^\sharp) / C(\phi^\sharp)^\circ = 
C(\phi^\sharp) / C(\phi) .
\end{equation}
Consider $\phi^\sharp \in \Phi (\SL_m (\mathbb H))$ with two lifts $\phi,\phi' \in \Phi 
(\GL_m (\mathbb H))$ that are conjugate under $\GL_{2m}(\C)$. The restriction of 
$\phi^{-1} \phi'$ to $\C^\times \subset \mathbf{W}_\R$ is a group homomorphism 
$c : \C^\times \to Z(\GL_{2m}(\C))$. Clearly $\phi$ and $\phi'$ can only be conjugate if $c=1$,
so $\phi'$ can only differ from $\phi$ on $\tau \in \mathbf{W}_\R$. Since
\[
\phi' (\tau)^2 = \phi' (-1) = \phi (-1) = \phi (\tau)^2 ,
\]
either $\phi' (\tau) = - \phi (\tau)$ or $\phi' = \phi$. Recall the standard form of $\phi$
exhibited in the proof of Theorem \ref{thm:5.2}, with image in the Levi subgroup
$\GL_2 (\C)^m$ of $\GL_{2m}(\C)$. It shows that the Langlands parameter $\phi'$ determined by
$\phi' (\tau) = - \phi (\tau)$ is always conjugate to $\phi$, for example by the element
diag$(1,-1,1,\ldots,-1) \in \GL_{2m}(\C)$. Therefore \eqref{eq:6.9} has precisely two elements.
Now $e_{\mathbb H^\times} \C [\mc S_{\phi^\sharp}]$ is a two-dimensional semisimple $\C$-algebra,
so it is isomorphic to $\C \oplus \C$. We conclude that $\Irr ( \mc S_{\phi^\sharp},
e_{\mathbb H^\times})$ has two elements, for every $\phi^\sharp \in \Phi (\SL_m (\mathbb H))$.
\end{proof}

\section{Characterization of the LLC for some representations of $\GL_n (F)$}

It is known from \cite{Hen1} that generic representations of $\GL_n (F)$ can
be characterized in terms of $\gamma$-factors of pairs, where other part of the pair 
is a representation of a smaller general linear group. We will establish a more precise
version for irreducible representations that have nonzero vectors fixed under a 
specific compact open subgroup.

Let $F_s$ be a separable closure of $F$ and let Gal$(F_s / F)^l$ be the 
$l$-th ramification group of Gal$(F_s / F)$, with respect to the upper numbering. 
We define
\[ 
\Phi_l (G) := \{ \phi \in \Phi (G) :  \text{Gal}(F_s / F)^l \subset \ker (\phi) \} .
\]
Notice that 
\[
\Phi_{l'}(G)\subset\Phi_l(G),\qquad\text{if $l'\le l$.}
\]
We will say that $\phi\in\Phi(\GL_n(F))$ is \emph{elliptic} if
its image is not contained in any proper Levi subgroup of $\GL_n (\C)$.

\begin{lem} \label{lem:phi}
Let $\phi\in\Phi(\GL_n(F))$ such that $\phi$ is elliptic and $\SL_2(\C)\subset \ker
(\phi)$. Then we have
\[\text{$\phi\notin\Phi_{d(\phi)}(\GL_n(F))$ and
$\phi\in\Phi_{l}(\GL_n(F))$ for any $l>d(\phi)$,}\]
where  \begin{equation} \label{eqn:depth}
d(\phi):=\begin{cases}
0&\text{if $\bI_F\subset\ker(\phi)$,}\cr
\frac{\displaystyle \swan(\phi)}{\displaystyle n}&\text{otherwise,}
\end{cases}\end{equation}
and $\swan(\phi)$ denotes the Swan conductor of $\phi$.
\end{lem}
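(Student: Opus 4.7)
The plan is to combine the normality of the upper-numbering ramification subgroups with the classical integral formula expressing the Swan conductor of a representation in terms of its fixed subspaces. Because $\phi$ is elliptic and trivial on $\SL_2(\C)$, it is an irreducible $n$-dimensional representation of $\mathbf{W}_F$ on some space $V$; by continuity its restriction to $\bI_F$ has finite image, so $\phi$ may be viewed as coming from a representation of a finite quotient of $\Gal(F_s/F)$.

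The first step is to observe that $\Gal(F_s/F)^u$ is normal in $\Gal(F_s/F)$ for every $u\ge 0$, which is the defining virtue of the upper numbering. Consequently the subspace $V^{\phi(\Gal(F_s/F)^u)}$ is stable under $\phi(\Gal(F_s/F))$, and irreducibility of $\phi$ forces it to be either $0$ or all of $V$. Since the family $\{\Gal(F_s/F)^u\}_{u\ge 0}$ is decreasing in $u$, the function
\[
f(u):=\dim\bigl(V/V^{\phi(\Gal(F_s/F)^u)}\bigr)
\]
is non-increasing with values in $\{0,n\}$, and hence drops from $n$ to $0$ at a unique critical value $b\in[0,\infty)$. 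By definition of $\Phi_l$, one has $\phi\in\Phi_l$ precisely when $f(l)=0$.

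The second step is to invoke the upper-numbering expression for the Swan conductor,
\[
\swan(\phi)=\int_0^\infty f(u)\,du ,
\]
which in our step-function setting collapses to $\swan(\phi)=n\cdot b$. Whenever $\bI_F\not\subset\ker(\phi)$ we have $b>0$ and therefore $b=\swan(\phi)/n=d(\phi)$, so both claims $\phi\notin\Phi_{d(\phi)}$ and $\phi\in\Phi_l$ for $l>d(\phi)$ reduce to what we already know about $f$. In the remaining case $\bI_F\subset\ker(\phi)$ we have $d(\phi)=0$ and $\Gal(F_s/F)^l\subset\bI_F\subset\ker(\phi)$ for every $l>0$, which covers the second assertion.

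The main obstacle, and really the only nontrivial issue, is reconciling the continuity convention for $\Gal(F_s/F)^u$ at the jump $u=b$ with the strict inequality $l>d(\phi)$ appearing in the lemma: one uses the standard convention that $\Gal(F_s/F)^u$ is right-continuous in $u$, so that $f(b)=n$ while $f(u)=0$ for all $u>b$. Everything else reduces to the standard Swan-conductor formula for an irreducible Weil-group representation.
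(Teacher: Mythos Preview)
Your approach is correct and somewhat cleaner than the paper's. The paper works in lower numbering: it introduces the integer $c(\phi)$ with $G_{c(\phi)}\not\subset\ker\phi$ and $G_{c(\phi)+1}\subset\ker\phi$, invokes the formula $a(\phi)=n\bigl(\varphi_{F_s/F}(c(\phi))+1\bigr)$ for irreducible $\phi$ (citing Serre and Gross--Reeder), and then translates back to upper numbering via the Herbrand function. Your argument stays entirely in upper numbering, using normality of $G^u$ in $\Gal(F_s/F)$ to force $\dim(V/V^{G^u})\in\{0,n\}$, and then reads off the jump location from the integral formula $\swan(\phi)=\int_0^\infty\dim(V/V^{G^u})\,du$. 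Both arguments rest on irreducibility making the ramification behaviour ``all or nothing''; yours packages this more directly and avoids the Herbrand function altogether.

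Two small corrections. First, the standard Serre convention makes the upper-numbering filtration \emph{left}-continuous, not right-continuous: for a finite quotient one sets $G_u=G_{\lceil u\rceil}$ in lower numbering, and this left-continuity passes to $G^v=G_{\psi(v)}$. Fortunately this is exactly what you need to conclude $f(b)=n$, since at the jump $G^b$ equals the larger left-limit group and hence $V^{G^b}=0$; only the word is wrong. Second, your claim ``whenever $\bI_F\not\subset\ker(\phi)$ we have $b>0$'' fails for tamely ramified $\phi$: there $\bI_F$ acts nontrivially but every $G^u$ with $u>0$ lies in wild inertia and hence in $\ker\phi$, so $b=0$ and $\swan(\phi)=0$. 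This does not damage the argument---the identity $b=\swan(\phi)/n=d(\phi)$ and the conclusions $f(b)=n$, $f(l)=0$ for $l>b$ remain valid---but the sentence should be rephrased to allow $b\ge 0$.
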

\begin{proof}
Let $c(\phi)$ denote the greatest integer such that
\[\Gal(F_s/F)_{c(\phi)+1}\subset\ker(\phi),\]
if $\bI_F=\Gal(F_s/F)_0$ is not contained in $\ker(\phi)$,
and $-1$ otherwise. Recall the Herbrand function $\varphi_{F_s/F}$
\cite[Chap.~IV, \S~3]{Ser} that allows us to pass from the lower number to
the upper ones:
\[
\Gal(F_s/F)_l=\Gal(F_s/F)^{\varphi_{F_s/F}(l)}.
\]
Let $a(\phi)$ denote the Artin conductor of $\phi$. Because $\phi$ is assumed 
to be elliptic, the restriction of $\phi$ to $\bW_F$ is irreducible. The equality
\[
a(\phi)=n\left(\varphi_{F_s/F}(c(\phi))+1\right)
\]
was shown for $n=1$ in \cite[Chap.~VI, \S~2, Proposition~5]{Ser}. The proof 
for arbitrary $n$ is similar, see \cite[\S~2]{GrRe}. By the very definition
of the Swan conductor
\[
\varphi_{F_s/F} (c(\phi))=\frac{a(\phi)}{n}-1=\frac{\swan(\phi)}{n}.
\]
Then it follows from the definition of $c(\phi)$ that
$d(\phi)$ is the greatest number such that
\[\Gal(F_s/F)^{d(\phi)}\not\subset\ker(\phi).\] Hence we have 
\[\phi\notin\Phi_{d(\phi)}(\GL_n(F))\quad\text{and}\quad
\phi\in\Phi_{d(\phi)+1}(\GL_n(F)). \qedhere
\]
\end{proof} 

Let $\mf A$ be a hereditary $\mf o_F$-order $\mf A$ in $\Mat_n(F)$.
Let $\mf P$ denote the Jacobson
radical of $\mf A$, and let $e(\mf A)$ denote the $\mf o_F$-period of
$\mf A$, that is, the integer $e$ defined by
$\mf p_F\mf A=\mf P^e$.
Define a sequence of compact open subgroups of
$\GL_n(F)$ by
\[U^0(\mf A)=\mf A^\times,\quad\text{and}\quad
U^m(\mf A)=1+\mf P^m,\;\;m\ge 1.\] 
Let $m$, $m'$ be integers satisfying $m>m'\ge\lfloor m/2\rfloor$. There is
a canonical isomorphism 
\[U^{m'+1}(\mf A)/U^{m+1}(\mf A)\to\mf P^{m'+1}/\mf P^{m+1},\]
given by $x\mapsto x-1$. This leads to an isomorphism from 
$\mf p^{-1}\mf P^{-m}/\mf p^{-1}\mf P^{-m'}$ to the Pontrjagin dual of 
$U^{m'+1}(\mf A)/U^{m+1}(\mf A)$,  explicitly given by
\[\beta+\mf p^{-1}\mf P^{-m'}\mapsto\psi_\beta \qquad \beta\in\mf p^{-1}
\mf P^{-m},\]
with $\psi_\beta(1+x)=(\psi\circ\tr_{\Mat_n(F)})(\beta x)$, for $x\in \mf P^{-m'}$.
 
We recall from \cite[(1.5)]{BKlivre} that a \emph{stratum} is a quadruple
$[\mf A,m,m',\beta]$ consisting of a hereditary $\mf o_F$-order $\mf A$ in
$\Mat_n(F)$, integers $m>m'\ge 0$, and an element $\beta\in\Mat_n(F)$ with
$\mf A$-valuation $\nu_{\mf A}(\beta)\ge -m$.
A stratum of the form $[\mf A,m,m-1,\beta]$ is called \emph{fundamental}
\cite[(2.3)]{BKlivre} if the coset $\beta+\mf p^{-1}\mf P^{1-m}$ does not 
contain a nilpotent element of $\Mat_n(F)$. 
We remark that the formulation in \cite{Bus} is slightly different because the
notion of a fundamental stratum there allows $m$ to be $0$. 

Fix an irreducible supercuspidal representation $\pi$ of $\GL_n(F)$. 
According to \cite[Theorem~2]{Bus} there exists a hereditary order $\mf A$ 
in $\Mat_n(F)$ such that either
\enuma{
\item $\pi$ contains the trivial character of $U^1(\mf A)$, or
\item there is a fundamental stratum $[\mf A,m,m-1,\beta]$ in $\Mat_n(F)$ such
that $\pi$ contains the character $\psi_\beta$ of $U^m(\mf A)$.
}

Moreover, in case (b), if a stratum $[\mf A_1,m_1,m_1-1,\beta_1]$
is such that $\beta_1$ occurs in the restriction of $\pi$ to
$U^{m_1}(\mf A_1)$, then $m_1/e(\mf A_1)\ge m/e(\mf A)$, and we have equality
here if and only $[\mf A_1,m_1,m_1-1,\beta_1]$ is fundamental \cite[Theorem~$2'$]{Bus}. 

The above provides a useful invariant of the representation, called the \emph{depth} 
(or \emph{normalized level}) of $\pi$. It is defined as
\begin{equation} \label{eqn:def_depth}
d(\pi):=\min\left\{m/e(\mf A)\right\},
\end{equation}
where $(m,\mf A)$ ranges over all pairs consisting of an integer $m\ge 0$
and a hereditary $\mf o_F$-order in $\Mat_n(F)$ such that $\pi$ contains 
the trivial character of $U^{m+1}(\mf A)$. 

The following result was claimed in \cite[Theorem 2.3.6.4]{Yu}. Although Yu did not
provide a proof, he indicated that an argument along similar lines as ours is possible.

\begin{prop} \label{lem:pi}
Let $\pi \in \Irr (\GL_n (F)$ be supercuspidal and $\phi:=\rec_{F,n}(\pi)$.  Then 
\[d(\phi)=d(\pi).\]  
\end{prop}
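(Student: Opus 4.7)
The strategy is to extract both depths from conductors and then exploit the compatibility of $\rec_{F,n}$ with $\varepsilon$-factors. Recall that the LLC for $\GL_n(F)$ is characterized by Henniart \cite{Hen1} through preservation of $\varepsilon$-factors of pairs of supercuspidals; specialising one factor to the trivial character of $F^\times$ yields the identity
\[
f(\pi)\;=\;a(\phi),
\]
where $f(\pi)$ is the Jacquet--Piatetski-Shapiro--Shalika conductor and $a(\phi)$ is the Artin conductor. The proof consists of rewriting both sides in terms of the respective depths.

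Since $\pi$ is supercuspidal, $\phi$ is elliptic and $\SL_2(\C)\subset\ker(\phi)$, so Lemma~\ref{lem:phi} applies. Because $\phi|_{\bW_F}$ is irreducible and $\bI_F\triangleleft\bW_F$, the subspace $\phi^{\bI_F}$ is $\bW_F$-stable, hence is either zero or all of $\phi$. The latter forces $\bI_F\subset\ker(\phi)$; by ellipticity and tameness this reduces to the case $n=1$, where the statement is just the classical compatibility of local class field theory with the conductor of characters of $F^\times$. Assuming therefore that $\phi$ is wildly ramified, one has $\phi^{\bI_F}=0$ and
\[
a(\phi)\;=\;\swan(\phi)+n-\dim\phi^{\bI_F}\;=\;\swan(\phi)+n\;=\;n\bigl(1+d(\phi)\bigr).
\]

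On the automorphic side the goal is the parallel identity $f(\pi)=n(1+d(\pi))$. By \cite[Theorem~$2'$]{Bus} there exists a hereditary $\mathfrak o_F$-order $\mathfrak A$ and a fundamental stratum $[\mathfrak A,m,m-1,\beta]$ contained in $\pi$ with $d(\pi)=m/e(\mathfrak A)$; the Bushnell--Kutzko classification \cite{BKlivre} realises $\pi\cong\cInd_J^{\GL_n(F)}(\lambda)$ from a simple type extending this stratum, and the conductor computation for such types gives
\[
f(\pi)\;=\;n\bigl(1+d(\pi)\bigr).
\]
Combining this with the two preceding displays yields $n(1+d(\phi))=a(\phi)=f(\pi)=n(1+d(\pi))$, i.e.\ $d(\phi)=d(\pi)$. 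The depth-zero case ($d(\pi)=0$) is absorbed into this framework with the convention that a fundamental stratum of $\mathfrak o_F$-level $0$ corresponds to $\pi$ containing the trivial character of $U^1(\mathfrak A)$; then $f(\pi)=n$ forces $\swan(\phi)=0$, and Lemma~\ref{lem:phi} gives $d(\phi)=0$ as well.

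The main obstacle is securing the sharp conductor formula $f(\pi)=n(1+d(\pi))$ in positive characteristic, which requires the full Bushnell--Kutzko theory of simple types (extended to positive characteristic by S\'echerre). An alternative, more hands-on route would be to prove the two inequalities separately: for $d(\phi)\le d(\pi)$, use the $U^{m+1}(\mathfrak A)$-invariance of a vector in $\pi$ to show, via the stratum pairing $\beta\mapsto\psi_\beta$ and the matching of Whittaker models, that $\phi$ is trivial on $\Gal(F_s/F)^l$ for all $l>m/e(\mathfrak A)$; for the reverse inequality, transfer the non-nilpotency of the fundamental stratum to a nonvanishing of $\phi$ on the ramification filtration at exactly level $m/e(\mathfrak A)$. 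Either route leans ultimately on the link between a simple stratum $[\mathfrak A,m,m-1,\beta]$ and the wild ramification of the field extension $F[\beta]/F$, which is where the real content sits.
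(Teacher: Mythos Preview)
Your overall strategy coincides with the paper's: both arguments start from the conductor equality $a(\phi)=f(\pi)$ (a consequence of the $\varepsilon$-factor compatibility of the LLC) and then rewrite each side as $n(1+\text{depth})$. The Galois-side identity $a(\phi)=n(1+d(\phi))$ is handled the same way, essentially via Lemma~\ref{lem:phi}.

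The difference lies entirely in how the automorphic-side identity $f(\pi)=n(1+d(\pi))$ is secured, and this is where your proposal is incomplete. You assert that it follows from the Bushnell--Kutzko construction of $\pi$ via a simple type, but give no argument or precise reference; as you yourself concede, this is ``the main obstacle,'' and it is in fact the entire content of the proposition beyond Lemma~\ref{lem:phi}. The paper does not go through the simple-type machinery of \cite{BKlivre}. Instead it invokes Bushnell's earlier paper \cite{Bus}: one chooses a principal order $\mathfrak A$ with $e(\mathfrak A)=n/\gcd(n,f(\pi))$, uses \cite[Theorem~3]{Bus} to find a nondegenerate representation $\varrho$ of $\mathfrak K(\mathfrak A)$ contained in $\pi$, and reads off $d(\varrho)/e(\mathfrak A)=f(\pi)/n-1$ from \cite[(3.7)]{Bus}. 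The bulk of the paper's proof is then devoted to showing that $d(\pi)$ actually \emph{equals} $d(\varrho)/e(\mathfrak A)$ rather than being strictly smaller: this requires a separate minimality argument via \cite[(3.13)]{Bus} together with a careful verification that the order realising the minimum satisfies $e(\mathfrak A')=n/\gcd(n,f(\pi))$. That computation is exactly the conductor formula you need, and it does not drop out of the simple-type classification without comparable work; citing \cite{BKlivre} alone does not close the gap.
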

\begin{proof} 
We have
\begin{equation} \label{eqn:epsA}
\vareps(s,\phi,\psi) =
\vareps(0,\phi,\psi) \, q^{-a(\phi)s} \quad\text{with } \vareps(0,\phi,\psi)
\in \C^\times .
\end{equation}
It is known that the LLC for $\GL_n (F)$ preserves the $\vareps$-factors:
\[
\vareps(s,\phi,\psi)=\vareps(s,\pi,\psi),
\]
where $\vareps(s,\pi,\psi)$ is the Godement-Jacquet local constant \cite{GoJa}. 
It takes the form
\begin{equation} \label{eqn:epsGJ}
\vareps(s,\pi,\psi) =
\vareps(0,\pi,\psi) \, q^{-f(\pi)s},\quad\text{where $\vareps(0,\pi,\psi)
\in \C^\times$.}
\end{equation} 
Recall that $f(\pi)$ is an integer, called the conductor of $\pi$.
It follows from \eqref{eqn:epsA} and \eqref{eqn:epsGJ} that
\begin{equation} \label{eqn:a=f}
a(\phi)=f(\pi).
\end{equation}
In the case when $\pi$ is an unramified representation of $F^\times$, 
the inertia subgroup $\bI_F$ is contained in $\ker \phi$,
with $\phi=\rec_{F,1}(\pi)$. Hence (\ref{eqn:depth}) implies that $a(\phi)=0$.
On the other hand, $\pi$ is trivial on $\mf o_F^\times$, and a fortiori
trivial on $1+\mf p_F=U^1(\mf A)$, with $\mf A=\mf o_F$. Then
(\ref{eqn:def_depth}) implies that $d(\pi)=0=d(\phi)$.

From now on we will assume that we are not in the above special case, that is,
we assume that $n\ne 1$ or that $\pi$ is ramified.
Let $\mf A$ be a principal $\mf o_F$-order in
$\Mat_n(F)$ such that $e(\mf A)=n/\gcd(n,f(\pi))$, and
let $\mf K(\mf A)$ denote the normalizer in $\GL_n(F)$ of $\mf A$.
By \cite[Theorem~3]{Bus} the restriction of $\pi$ to $\mf K(\mf A)$ 
contains a \emph{nondegenerate} (in the sense of \cite[(1.21)]{Bus}) 
representation $\varrho$ of $\mf K(\mf A)$, and we have \cite[(3.7)]{Bus}
\begin{equation} \label{eqn:Bus}
d(\varrho)=e(\mf A)\left(\frac{f(\pi)}{n}-1\right),
\end{equation} 
where $d(\varrho)\ge 0$ is the least integer such that 
\[U^{d(\varrho)+1}(\mf A)\subset\ker(\varrho).\]
Moreover, if the irreducible representation $\varrho'$ of $\mf K(\mf A)$
occurs in the restriction of $\pi$ to $\mf K(\mf A)$, then $d(\varrho')=d(\varrho)$ 
if and only if $\varrho'$ is nondegenerate \cite[(5.1)~(iii)]{Bus}.
Hence we obtain from \eqref{eqn:a=f} and \eqref{eqn:Bus} that
\begin{equation} \label{eqn:drho}
\frac{d(\varrho')}{e(\mf A)}=\frac{f(\pi)}{n}-1=\frac{a(\phi)}{n}-1= d(\phi) 
\end{equation}
for every nondegenerate irreducible representation $\rho'$ of $\mf K(\mf A)$ which
occurs in the restriction of $\pi$ to $\mf K(\mf A)$.

It follows from the definition \eqref{eqn:def_depth} of $d(\pi)$, that
\begin{equation} \label{eqn:ineq}
d(\pi)\le \frac{d(\varrho')}{e(\mf A)},
\end{equation}
for every nondegenerate irreducible representation $\rho'$ of $\mf K(\mf A)$ which
occurs in the restriction of $\pi$ to $\mf K(\mf A)$.

We will check that \eqref{eqn:ineq} is actually an equality.
The case where $d(\pi)=0$ is easy, so we only consider $d(\pi)>0$.

Let $\mf A'$ be any hereditary $\mf o_F$-order $\mf A'$ in $\Mat_n(F)$,
and define $m_{\mf A'}(\pi)$ to be the least non-negative integer $m$ such
that the restriction of $\pi$ to $U^{m+1}(\mf A')$ contains the trivial
character. Then choose $\mf A'$ so that $m_{\mf A'}(\pi)/e(\mf A')$ is
minimal, and let $[\mf A', m_{\mf A'}(\pi),m_{\mf A'}(\pi)-1,\beta]$ be a
stratum occuring in $\pi$. By \cite[Theorem~$2'$]{Bus} this is a fundamental
stratum. By \cite[(3.4)]{Bus} we may assume that the integers $e(\mf A')$
and $m_{\mf A'}(\pi)$ are relatively prime. Hence we may 
apply \cite[(3.13)]{Bus}. We find that $\mf A'$ is principal and that
every irreducible representation $\varrho$ of $\mf K(\mf A')$ which occurs
in the restriction of $\pi$ to $\mf K(\mf A')$, and such that the
restriction of $\varrho$ to $U^{m_{\mf A'}(\pi)}(\mf A')$ contains $\psi_\beta$,
is nondegenerate. In particular we have $d(\varrho')=m_{\mf A'}(\pi)$.

It remains to check that the principal order $\mf A'$ satisfies 
\begin{equation} \label{eqn:check}
e(\mf A')=n/\gcd(n,f(\pi)).
\end{equation}
Let $b=\gcd(n,f(\pi))$. Set 
$n=n'b$ and $f(\pi)=f'(\pi)b$. 
By using \cite[(3.9)]{Bus}, we obtain that $n'$ divides $e(\mf A')$. 
Let $\mf P'$ denote the Jacobson radical of $\mf A'$.
Then \cite[(3.3.8)]{BuFr} and \cite[(3.8)]{Bus} assert that
\[q^{f(\pi)}=[\mf A':\mf p_F(\mf P^{\prime})^{d(\varrho')}]^{1/n}.\]
That is, since $\mf p_F\mf A'=(\mf P')^{e(\mf A')}$,
\[q^{f(\pi)}=[\mf A':(\mf P^{\prime})^{d(\varrho')+e(\mf A')}]^{1/n}
=q^{n(d(\varrho')+e(\mf A'))/e(\mf A')}=
q^{n(1+d(\varrho')/e(\mf A'))}.\]
Hence we get
\[f(\pi)=n(1+d(\varrho')/e(\mf A')),\]
that is,
\[d(\varrho')=\frac{e(\mf A')f(\pi)}{n}-e(\mf A')
=\frac{e(\mf A')f'(\pi)}{n'}-e(\mf A').\]
Hence we have
\[n'd(\varrho')=e(\mf A')f'(\pi)-e(\mf A')n'.\]
Since $e(\mf A')$ and $d(\rho')=m_{\mf A'}(\pi)$ are relatively prime, we
deduce that $e(\mf A')$ divides $n'$. Thus we have $e(\mf A')=n'$, which 
means that \eqref{eqn:check} holds. 

We conclude that \eqref{eqn:ineq} is indeed an equality, which together with
\eqref{eqn:drho} shows that $d(\varrho')=d(\pi)$.
\end{proof} 

As congruence subgroups are the main examples of groups like $U^m (\mf A)$ above,
they have a link with depths. This can be made precise.
Let $K_0 = \GL_n (\mf o_F)$ be the standard maximal compact subgroup of $\GL_n (F)$ 
and define, for $r \in \Z_{>0}$:
\[
K_r = \ker \big( \GL_n (\mf o_F) \to \GL_n (\mf o_F / \mf p_F^r) \big) =
1 + \Mat_n (\mf p_F^r) .
\]
We denote the set of irreducible smooth $\GL_n (F)$-representations that are generated
by their $K_r$-invariant vectors by $\Irr (\GL_n (F),K_r)$. To indicate the ambient
group $\GL_n (F)$ we will sometimes denote $K_r$ by $K_{r,n}$.

\begin{lem}\label{lem:depthKr}
For $\pi \in \Irr (\GL_n (F))$ and $r \in \Z_{> 0}$ the following are equivalent:
\begin{itemize}
\item $\pi \in \Irr (\GL_n (F), K_r)$,
\item $d(\pi) \leq r-1$.
\end{itemize}
\end{lem}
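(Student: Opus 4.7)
The plan is to work through the standard hereditary order $\mf A_0 = \Mat_n(\mf o_F)$, which satisfies $e(\mf A_0) = 1$ and has Jacobson radical $\Mat_n(\mf p_F)$, so that $U^r(\mf A_0) = K_r$. Since $\pi$ is irreducible and smooth, the condition $\pi \in \Irr(\GL_n(F), K_r)$ is equivalent to $\pi^{K_r} \neq 0$ (any nonzero subspace of invariants generates the whole representation).

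The forward implication is immediate from the definition \eqref{eqn:def_depth}: a nonzero $K_r$-fixed vector in $\pi$ means that $\pi$ contains the trivial character of $U^{(r-1)+1}(\mf A_0)$, so the pair $(m, \mf A) = (r-1, \mf A_0)$ is admissible in the infimum defining $d(\pi)$, giving $d(\pi) \leq (r-1)/1 = r-1$.

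For the reverse implication, suppose $d(\pi) \leq r-1$ and choose a pair $(m, \mf A)$ with $m/e(\mf A) \leq r-1$ and $\pi^{U^{m+1}(\mf A)} \neq 0$. Replacing $\mf A$ by a $\GL_n(F)$-conjugate (which preserves $e(\mf A)$ and only shifts the $U^{m+1}$-invariant subspace under $\pi$), I may assume $\mf A$ is the stabilizer of a lattice chain $\mf o_F^n = L_0 \supsetneq L_1 \supsetneq \cdots \supsetneq L_e = \varpi_F L_0$ with $e = e(\mf A)$; in particular $\mf A \subseteq \mf A_0$. The crucial step is to prove the inclusion $K_r \subseteq U^{m+1}(\mf A)$, whence every $U^{m+1}(\mf A)$-fixed vector is automatically $K_r$-fixed. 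For any $x \in \mf A_0 = \End_{\mf o_F}(L_0)$ and $0 \leq i < e$, one has $\varpi_F x L_i \subseteq \varpi_F L_0 = L_e \subseteq L_{i+1}$, which yields $\varpi_F \mf A_0 \subseteq \mf P$. Combining this with $\varpi_F \mf A = \mf P^e$ (so that $\varpi_F^{r-1}\mf P = \mf P^{(r-1)e+1}$) gives
\[
\varpi_F^r \mf A_0 \subseteq \varpi_F^{r-1} \mf P = \mf P^{(r-1)e+1} \subseteq \mf P^{m+1},
\]
where the last inclusion uses $m \leq (r-1)e$. Passing to unit groups yields $K_r \subseteq U^{m+1}(\mf A)$, completing the argument.

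The argument is essentially bookkeeping with hereditary orders; the one substantive ingredient is the inclusion $\varpi_F \mf A_0 \subseteq \mf P(\mf A)$ for standard $\mf A$, which expresses the fact that $\mf A_0$ is the maximal order refining any lattice chain based at $\mf o_F^n$. No genuine obstacle is anticipated.
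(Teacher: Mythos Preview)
Your argument is correct. Both directions are handled cleanly, and the key inclusion $\varpi_F \mf A_0 \subseteq \mf P(\mf A)$ for a standard hereditary order $\mf A \subseteq \mf A_0$ is exactly what is needed; your lattice-chain verification of it is fine.

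The route, however, differs from the paper's. The paper switches to the Moy--Prasad definition of depth and argues through the Bruhat--Tits building: it identifies $K_r$ with the Moy--Prasad group $P_{o,(r-1)+}$ at the origin $o$, and for the converse uses that any point $x$ realising the depth can be moved into the star of $o$ (since $\GL_n(F)$ acts transitively on vertices), after which one has the inclusion $P_{x,(r-1)+} \supset P_{o,(r-1)+} = K_r$. Your proof instead stays entirely within the Bushnell framework of hereditary orders, which is the definition of $d(\pi)$ actually stated in \eqref{eqn:def_depth}. This has the advantage of being self-contained: you do not need to invoke the (nontrivial) equivalence between the Bushnell and Moy--Prasad notions of depth. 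The building argument, on the other hand, makes transparent why the vertex $o$ plays a distinguished role and would generalise more readily to other reductive groups.
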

\begin{proof}
For this result it is convenient to use the equivalent definition of depth provided by
Moy and Prasad \cite{MoPr}. In their notation the group $K_r$ is $P_{o,(r-1)+}$,
where $o$ denotes the origin in the standard apartment of the Bruhat--Tits building of 
$\GL_n (F)$. From the definition in \cite[\S 3.4]{MoPr} we read off that any 
$\pi \in \Irr (\GL_n (F), K_r)$ has depth $\leq r-1$.

Conversely, suppose that $d(\pi) \leq r-1$. Then $\pi$ has nonzero vectors fixed by the
group $P_{x,(r-1)+}$, where $x$ is some point of the Bruhat--Tits building. Since we may
move $x$ within its $\GL_n (F)$-orbit and there is only one orbit of vertices, we may assume 
that $x$ lies in the star of $o$. As $r-1 \in Z_{\geq 0}$, there is an inclusion
\[
P_{x,(r-1)+} \supset P_{o,(r-1)+} = K_r ,
\]
so $\pi$ has nonzero vectors fixed by $K_r$.
\end{proof}

Let us recall some basic properties of generic representations, from
\cite[Section 2]{JPS2}. Let $\psi \colon F \to \C^\times$ be a character which is
trivial on $\mf o_F$ but not on $\varpi_F^{-1} \mf o_F$. We note that $\psi$ is
unitary because $F / \mf o_F$ is a union of finite subgroups. Let $U = U_n$ be the standard
unipotent subgroup of $\GL_n (F)$, consisting of upper triangular matrices. We need
a character $\theta$ of $U$ which does not vanish on any of the root subgroups
associated to simple roots. Any choice is equally good, and it is common to take
\[
\theta \big( (u_{i,j})_{i,j=1}^n \big) = 
\psi \Big (\sum\nolimits_{i=1}^{n-1} u_{i,i+1} \Big) . 
\]
Let $(\pi,V) \in \Irr (\GL_n (F))$. One calls $\pi$ generic if there exists a
nonzero linear form $\lambda$ on $V$ such that
\[
\lambda (\pi (u) v) = \theta (u) \lambda (v) \text{ for all } u \in U, v \in V. 
\]
Such a linear form is called a Whittaker functional, and the space of those
has dimension 1 (if they exist).
Let $W(\pi,\theta)$ be the space of all functions $W : G \to \C$ of the form
\[
W_v (g) = \lambda (\pi (g) v) \qquad g \in G, v \in V.
\]
Then $W(\pi,\theta)$ is stable under right translations and the representation
thus obtained is isomorphic to $\pi$ via $v \leftrightarrow W_v$. 
Most irreducible representations of $\GL_n (F)$,
and in particular all the supercupidal ones, are generic \cite{GeKa}. 

We consider one irreducible generic representation $\pi$ of $\GL_n (F)$ and 
another one, $\pi'$, of $\GL_{n-1}(F)$.
For $W \in W(\pi,\theta)$ and $W' \in W(\pi',\theta)$ one defines the integral
\begin{equation}\label{eq:A.11}
\Psi (s,W,W') = \int_{U_{n-1} \backslash \GL_{n-1}(F)} W \matje{g}{0}{0}{1}
\overline{W'(g)}\, | \det (g) |_F^{s-1/2} \textup{d}\mu (g) ,
\end{equation}
where $\mu$ denotes the quotient of Haar measures on $\GL_{n-1}(F)$ and on $U_{n-1}$.
This integral is known to converge absolutely when Re$(s)$ is large 
\cite[Theorem 2.7.i]{JPS2}. The contragredient representations $\check \pi$ and 
$\check \pi'$ are also generic. We define 
$\check W \in W(\check \pi,\overline{\theta})$ by
\[
\check W (g) = W (w_n g^{-T}) \qquad g \in \GL_n (F) , 
\]
where $g^{-T}$ is the transpose inverse of $g$ and $w_n$ is the permutation
matrix with ones on the diagonal from the lower left to the upper right corner.

We denote the central character of $\pi'$ by $\omega_{\pi'}$.
With these notations the $L$-functions, $\vareps$-factors and $\gamma$-factors 
of the pair $(\pi,\pi')$ are related by
\begin{align}
\frac{\Psi (s,W,W')}{L(s,\pi \times \pi')} \vareps (s,\pi \times \pi',\psi) = 
\omega_{\pi'}(-1)^{n-1} \frac{\Psi (1-s,\check W,\check W')}{
L(1-s,\check \pi \times \check \pi')} , \label{eq:A.12} \\
\gamma (s,\pi \times \pi',\psi) = \vareps (s,\pi \times \pi',\psi) 
\frac{L(1-s,\check \pi \times \check \pi')}{L(s,\pi \times \pi')} , \label{eq:A.13}
\end{align}
see \cite[Theorem 2.7.iii]{JPS2}.
We regard these equations as definitions of the $\vareps$- and $\gamma$-factors.

\begin{thm} \label{thm:LLCr}
Let $\pi$ be a supercuspidal representation in $\Irr(\GL_n(F),K_{r,n})$, 
with $r\in\Z_{>0}$. Let $\phi \in \Phi (\GL_n(F))$ be an elliptic 
parameter such that $\SL_2(\C)\subset\ker(\phi)$ and $d(\phi) \leq r-1$. Suppose that
$\det \phi$ corresponds to the central character of $\pi$ via local class field theory
and that
\[
\vareps(s,\pi\times \pi',\psi) = \vareps(s,\phi\otimes\rec_{F,n'}(\pi'),\psi)
\]
holds in one of the following cases:
\enuma{
\item for $n' = n-1$ and every generic $\pi' \in \Irr(\GL_{n'}(F),K_{2r-1,n'})$;
\item for every $n'$ such that $1 \leq n' < n$, and for every supercuspidal representation 
$\pi'$ in $\Irr(\GL_{n'}(F),K_{2r-1,n'})$.
}
Then $\phi=\rec_{F,n}(\pi)$.
\end{thm}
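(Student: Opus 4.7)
The plan is to set $\phi_\pi := \rec_{F,n}(\pi)$ and show $\phi = \phi_\pi$ by invoking Henniart's characterization of the LLC via $\varepsilon$-factors of pairs, after upgrading the depth-bounded hypothesis to an unrestricted statement.

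The first step is to bring both parameters into the same regime and align their determinants. By Proposition \ref{lem:pi} combined with Lemma \ref{lem:depthKr}, $d(\phi_\pi) = d(\pi) \leq r-1$, so both $\phi$ and $\phi_\pi$ have depth $\leq r-1$. The parameter $\phi_\pi$ is elliptic with $\SL_2(\C) \subset \ker \phi_\pi$ because $\pi$ is supercuspidal, matching the hypotheses on $\phi$. Compatibility of the LLC for $\GL_1(F)$ with local class field theory yields $\det \phi = \det \phi_\pi$.

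Since the LLC preserves $\varepsilon$-factors of pairs, the hypothesis of the theorem translates on the Galois side into
\[
\varepsilon(s,\phi \otimes \rec_{F,n'}(\pi'),\psi) = \varepsilon(s,\phi_\pi \otimes \rec_{F,n'}(\pi'),\psi)
\]
for all $\pi'$ in the restricted class, namely those with $d(\pi') \leq 2r-2$ (via Lemma \ref{lem:depthKr} applied to $K_{2r-1,n'}$). Henniart's theorem characterizing Langlands parameters of $\GL_n(F)$ by $\varepsilon$-factors of pairs with smaller $\GL_{n'}(F)$-parameters (case (b): supercuspidals of every $n' < n$; case (a): together with the Jacquet--Piatetski-Shapiro--Shalika converse theorem, generic representations of $\GL_{n-1}(F)$) would, once the above equality is known for \emph{all} $\pi'$ rather than just the depth-bounded ones, force $\phi = \phi_\pi$.

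The main obstacle is thus the extension of the $\varepsilon$-factor equality from $\pi'$ of depth $\leq 2r-2$ to arbitrary $\pi'$. The tool here is the stability of $\varepsilon$-factors under highly ramified twists: a Deligne--Henniart-style formula asserts that if $\sigma$ is a representation of $\mathbf{W}_F$ whose depth sufficiently exceeds that of $\phi$, then $\varepsilon(s,\phi \otimes \sigma,\psi)$ depends on $\phi$ only through $\det \phi$. Because $d(\phi), d(\phi_\pi) \leq r-1$ and $\det \phi = \det \phi_\pi$, the bound $2r-1$ on the fixed-vector level of $\pi'$ should be exactly the threshold ensuring that for every $\pi'$ with $d(\pi') > 2r-2$ the two $\varepsilon$-factors coincide automatically. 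In case (b), multiplicativity of $\varepsilon$-factors under tensor products further reduces higher-rank supercuspidal $\pi'$ to character twists where the stability formula applies directly. Carefully tracking the exact threshold and combining this stability step with the preceding reduction closes the argument via Henniart's characterization.
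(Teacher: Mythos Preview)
Your overall strategy---depth preservation via Proposition~\ref{lem:pi}, stability of $\varepsilon$-factors for high-depth twists, then Henniart's characterization---is exactly the paper's. Two points in your execution are off, though.

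First, your final sentence about case~(b) is confused. You cannot ``reduce higher-rank supercuspidal $\pi'$ to character twists'' by multiplicativity: a supercuspidal is already irreducible and admits no such factorization. The paper instead invokes the stability result directly at the level of supercuspidal $\sigma'$ with $d(\sigma') > 2r-2$ (this is the content of the cited \cite[Theorem~8.4]{Gan2}, which needs Proposition~\ref{lem:pi}); no reduction to characters is involved.

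Second, in case~(a) you are vague about how the hypothesis on generic $\pi' \in \Irr(\GL_{n-1}(F),K_{2r-1})$ yields the equality for low-depth supercuspidals $\sigma'$ on $\GL_{n'}(F)$ with $n' < n-1$, which is what Henniart's theorem actually requires. The paper handles this by an explicit trick: parabolically induce $\sigma' \otimes \mathrm{triv}^{\otimes(n-1-n')}$ to a generic representation of $\GL_{n-1}(F)$ of low depth, apply hypothesis~(a), and then unwind via multiplicativity of $\gamma$-factors. This also requires separately establishing $\gamma(s,\pi,\psi) = \gamma(s,\rec_{F,n}^{-1}(\phi),\psi)$, which the paper does by a similar embedding using a high-depth supercuspidal on $\GL_{n-2}(F)$. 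Your invocation of ``the Jacquet--Piatetski-Shapiro--Shalika converse theorem'' does not supply this step; the paper uses only \cite[Th\'eor\`eme~1.1]{Hen1} in both cases.
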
 
\begin{proof} 
The claim is trivially true if $n=1$, so we assume $n \geq 2$ throughout the proof.
We would like to show that 
\begin{equation}\label{eq:14}
\vareps(s,\pi\times \pi',\psi) = \vareps(s,\rec_{F,n}^{-1}(\phi) \otimes \pi',\psi)
\end{equation}
for every generic representation $\pi'$ of $\GL_{n-1}(F)$, for then we are in the
setting of \cite{Hen1}. 
Since $\pi$ and $\rec_{F,n}^{-1}(\phi)$ are supercuspidal
\begin{equation} \label{eqn:Ls}
L(s,\pi \times \sigma ) = L(s,\rec_{F,n}^{-1}(\phi) \times \sigma ) =
L(s,\check\pi \times \check \sigma) = L(s,\rec_{F,n}^{-1}(\check \phi) \times \check \sigma ) = 1
\end{equation}
for any generic representation $\sigma$ of a general linear group of smaller size 
\cite[Theorem 8.1]{JPS2}. In view of \eqref{eq:A.13} we might just as well check 
\eqref{eq:A.14} with $\gamma$-factors instead of $\vareps$-factors. We proceed as in the 
proof of \cite[(3.3.4)]{Hen1}. Using the multiplicativity of $\gamma$-factors 
\cite[Theorem 3.1]{JPS2}, we first write $\gamma(s,\pi\times \pi',\psi)$ as a product of 
$\gamma(s,\pi\times\langle\Delta_i\rangle^t,\psi)$, where $\Delta_i$ 
is a Zelevinsky segment. Next we write $\gamma(s,\pi \times \langle \Delta_i\rangle^t,\psi)$ 
itself as a product 
\[
\prod \nolimits_{h} \gamma(s,\pi \times \pi_i' |\;|^h ,\psi),
\]
with $\pi'_i$ supercuspidal. The same argument yields the equality
\[
\gamma (s,\rec_{F,n}^{-1}(\phi) \otimes \pi',\psi) = 
\prod\nolimits_{i,h} \gamma(s,\rec_{F,n}^{-1}(\phi) \times \pi_i' |\;|^h ,\psi) .
\]
Hence, to establish \eqref{eq:14} it suffices to show that
\begin{equation}\label{eq:15}
\gamma(s,\pi \times \sigma,\psi) = \gamma (s,\rec_{F,n}^{-1}(\phi) \times \sigma', \psi)
\end{equation}
whenever $\sigma' \in \Irr (\GL_{n'}(F))$ is a supercuspidal and $1 \leq n' < n$. 
By Proposition~\ref{lem:pi} $d(\rec_{F,n}^{-1}(\phi))=d(\phi) \leq r-1$,
and by Lemma~\ref{lem:depthKr}  $d(\pi)\le r-1$, so in particular
\[
2 \max \big\{ d(\pi), d(\rec_{F,n}^{-1}(\phi)) \big\} \leq 2 r - 2.
\]
This and the condition on the central character of $\pi$ mean that all the assumptions
used in the proof 
of \cite[Theorem 8.4]{Gan2} are fulfilled if $d(\sigma') > 2r - 2$. The result 
\cite[Theorem 8.4]{Gan2}, which uses Proposition \ref{lem:pi}, says that 
\eqref{eq:15} holds if $d(\sigma') > 2r - 2$. 

Next we consider a supercuspidal $\sigma' \in \GL_{n'}(F)$ of depth $\leq 2r-2$. 
By Lemma \ref{lem:depthKr} the representation $\sigma'$ has nonzero $K_{2r-1,n'}$-fixed vectors, so \eqref{eq:15}
holds under the assumption (b). If we are assuming (a) instead, consider  
\[
\pi' := I_{\GL_{n'}(F) \times \GL_1 (F)^{n - 1 - n'}}^{\GL_{n-1} (F)} (\sigma' \otimes 
\mathrm{triv}_{\GL_1 (F)}^{\otimes n - 1 - n'}) .
\]
This representation lies in $\Irr (\GL_{n-1}(F),K_{2r-1,n-1})$ and is generic because it
is irreducibly induced from a supercuspidal representation, so by the assumption of
the theorem
\begin{equation}\label{eq:16}
\gamma (s,\pi\times \pi',\psi) = \gamma (s,\rec_{n,F}^{-1} \phi \times \pi',\psi) .
\end{equation}
By \cite[Theorem 3.1]{JPS2}
\begin{multline}\label{eq:17}
\gamma (s,\pi \times \pi',\psi) = \gamma (s,\pi \times \sigma, \psi) 
\prod\nolimits_{j=1}^{n-1-n'} \gamma (s,\pi \times \mathrm{triv}_{\GL_1 (F)} ,\psi) \\
= \gamma (s,\pi \times \sigma', \psi) \gamma (s,\pi , \psi )^{n - 1 - n'} ,
\end{multline}
and the same holds with $\rec_{F,n}^{-1}(\phi)$ instead of $\pi$.
To deal with $\gamma (s,\pi , \psi )$ (only if $n > 2$, otherwise $n-1-n' = 0$) 
we take a look at a representation of the form 
\[
\tau : = I_{\GL_1 (F) \times \GL_{n-2}(F)}^{\GL_{n-1}(F)} 
\big( \mathrm{triv}_{\GL_1 (F)} \otimes \sigma \big) ,
\]
where $\sigma \in \Irr (\GL_{n-2}(F))$ is supercuspidal and $d(\sigma) > 2r - 2$. 
By the above
\begin{multline}\label{eq:18}
\gamma (s,\pi , \psi ) = \gamma (s,\pi \times \tau,\psi) 
\gamma (s,\pi \times \sigma,\psi )^{-1} \\
= \gamma (s,\rec_{F,n}^{-1}(\phi) \times \tau,\psi) \gamma (s,\rec_{F,n}^{-1}(\phi) 
\times \sigma,\psi )^{-1} = \gamma (s,\rec_{F,n}^{-1}(\phi),\psi).
\end{multline}
It follows from \eqref{eq:16}, \eqref{eq:17} and \eqref{eq:18} that
\begin{multline*}
\gamma (s,\pi \times \sigma', \psi) = \gamma (s,\pi \times \pi',\psi) 
\gamma (s,\pi , \psi )^{n' + 1 - n} \\
\gamma (s,\rec_{n,F}^{-1} \phi \times \pi',\psi) \gamma (s,\rec_{F,n}^{-1}(\phi),
\psi)^{n' + 1 - n} = \gamma (s,\rec_{n,F}^{-1} \phi \times \sigma,\psi) .
\end{multline*}
This finishes the proof of \eqref{eq:15} and of \eqref{eq:14}. Now we can apply
\cite[Th\'eor\`eme 1.1]{Hen1}, which says that $\pi \cong \rec_{F,n}^{-1}(\phi)$.
\end{proof}

\section{The method of close fields}
\label{sec:method}

Kazhdan's method of close fields \cite{Kaz,Del} has proven useful to generalize results 
that are known for groups over $p$-adic fields to groups over local fields of positive 
characteristic. It was worked out for inner forms of $\GL_n (F)$ by Badulescu \cite{Bad1}.

Let $F$ and $\widetilde F$ be two local non-archimedean fields which are close.
Let $G = \GL_m (D)$ be a standard inner form of $\GL_n (F)$ and let $\widetilde{G} = 
\GL_m (\widetilde D)$ be the standard inner form of $\GL_n (\widetilde F)$ with the 
same Hasse invariant as $G$. 

In this section, an object with a tilde will always be the counterpart over
$\widetilde F$ of an object (without tilde) over $F$, and a superscript $\sharp$ means
the subgroup of elements with reduced norm 1. Then
$\widetilde{G}^\sharp = \widetilde{G}_{\der}$ is an inner form of $\SL_n (\widetilde F)$
with the same Hasse invariant as $G^\sharp$ and 
\[
\chi_{\widetilde{G}} = \chi_{\widetilde{G}^\sharp} = \chi_{G^\sharp} = \chi_G .
\]
Let $\mf o_D$ be the ring of integers of $D ,\; \varpi_D$ a uniformizer and 
$\mf p_D = \varpi_D \mf o_D$ its unique maximal ideal. The explicit multiplication rules
in $D$ \cite[Proposition IX.4.11]{Wei2} show that we may assume that a power 
of $\varpi_D$ equals $\varpi_F$, a uniformizer of $F$.

Generalizing the notation for $\GL_n (F)$, let $K_0 = \GL_m (\mf o_D)$ 
be the standard maximal compact subgroup of $G$ and define, for $r \in \Z_{>0}$:
\[
K_r = \ker \big( \GL_m (\mf o_D) \to \GL_m (\mf o_D / \mf p_D^r) \big) =
1 + \Mat_m (\mf p_D^r) .
\]
We denote the category of smooth $G$-representations that are generated
by their $K_r$-invariant vectors by $\Mod (G,K_r)$. Let $\mc H (G,K_r)$ be the 
convolution algebra of compactly supported $K_r$-biinvariant functions $G \to \C$.
According to \cite[Corollaire 3.9]{BeDe}
\begin{equation}\label{eq:A.3}
\begin{array}{ccc}
\Mod (G,K_r) & \to & \Mod (\mc H (G,K_r)) , \\
V & \mapsto & V^{K_r}  
\end{array}
\end{equation}
is an equivalence of categories. The same holds for $(\widetilde{G},\widetilde K_r)$.

From now on we suppose that $F$ and $\widetilde F$ are $l$-close for some $l \geq r$,
that is, 
\begin{equation}\label{eq:A.2}
\mf o_F / \mf p_F^l \cong \mf o_{\widetilde F} / \mf p_{\widetilde F}^l 
\end{equation}
as rings. As remarked in \cite{Del}, for every local field of characteristic $p > 0$
and every $l \in \N$ there exists a finite extension of $\Q_p$ which is $l$-close to $F$.

Notice that \eqref{eq:A.2} induces a group isomorphism $\mf o_F^\times / 1 + \mf p_F^l
\cong \mf o_{\widetilde F}^\times / 1 + \mf p_{\widetilde F}^l$. A choice of uniformizers
$\varpi_F$ and $\varpi_{\widetilde F}$ then leads to
\begin{equation}\label{eq:A.7}
F^\times / 1 + \mf p_F^l \cong \Z \times \mf o_F^\times / 1 + \mf p_F^l \cong
\Z \times \mf o_{\widetilde F}^l / 1 + \mf p_{\widetilde F}^l \cong 
\widetilde{F}^\times / 1 + \mf p_{\widetilde F}^l .
\end{equation}
With \cite[Th\'eor\`eme 2.4]{Bad1}, \eqref{eq:A.2} also gives rise to a ring isomorphism
\begin{equation}\label{eq:A.14}
\lambda_r : \mf o_D  / \mf p_D^r \to \mf o_{\widetilde D} / \mf p_{\widetilde D}^r , 
\end{equation}
which in turn induces a group isomorphism
\[
\GL_m (\lambda_r) : K_0 / K_r = \GL_m ( \mf o_D  / \mf p_D^r) \to 
\widetilde K_0 / \widetilde K_r = \GL_m ( \mf o_{\widetilde D} / \mf p_{\widetilde D}^r ) .
\]
Recall that the Cartan decomposition for $G$ says that $K_0 \backslash G / K_0$ can
be represented by
\[
A^+ := \{ \mathrm{diag}(\varpi_D^{a_1}, \ldots, \varpi_D^{a_m}) \in \GL_m (D) : 
a_1 \leq \ldots \leq a_m \} . 
\]
Clearly $A^+$ is canonically in bijection with the analogous set $\widetilde A^+$ 
of representatives for $\widetilde K_0 \backslash \widetilde G / \widetilde K_0$
(which of course depends on the choice of a uniformizer $\varpi_{\widetilde D}$).
Since $K_r \backslash G / K_r$ can be identified with $K_r \backslash K_0 \times 
A^+ \times K_0 / K_r$, that and $\GL_m (\lambda_r)$ determine a bijection
\begin{equation}\label{eq:A.1}
\zeta_r : K_r \backslash G / K_r \to
\widetilde K_r \backslash \widetilde G / \widetilde K_r .
\end{equation}
Most of the next result can be found in \cite{Bad1,BHLS}.

\begin{thm}\label{thm:A.1}
Suppose that $F$ and $\widetilde F$ are sufficiently close, in the sense that the
$l$ in \eqref{eq:A.2} is large. Then the map $1_{K_r g K_r} \mapsto 
1_{\zeta_r (K_r g K_r)}$ extends to a $\C$-algebra isomorphism
\[
\zeta_r^G : \mc H (G,K_r) \to \mc H (\widetilde G,\widetilde K_r) . 
\]
This induces an equivalence of categories
\[
\overline{\zeta_r^G} : \Mod (G,K_r) \to \Mod (\widetilde G,\widetilde K_r)
\]
such that:
\enuma{
\item $\overline{\zeta_r^G}$ respects twists by unramified characters and its effect
on central characters is that of \eqref{eq:A.7}.
\item For irreducible representations, $\overline{\zeta_r^G}$ preserves temperedness, 
essential square-integrability and cuspidality.
\item Let be $P$ a parabolic subgroup of $G$ with a Levi factor $M$ which is standard, and
let $\widetilde P$ and $\widetilde M$ be the corresponding subgroups of $\widetilde G$. Then
\[
\begin{array}{ccc}
\Mod (G,K_r) & \xrightarrow{\; \overline{\zeta_r^G} \;} & 
\Mod (\widetilde G,\widetilde K_r) \\
\uparrow I_P^G & & \uparrow I_{\widetilde P}^{\widetilde G} \\
\Mod (M,K_r \cap M) & \xrightarrow{\; \overline{\zeta_r^M} \;} & 
\Mod (\widetilde M,\widetilde K_r \cap \widetilde M)
\end{array} 
\]
commutes.
\item $\overline{\zeta_r^G}$ commutes with the formation of contragredient representations.
\item $\overline{\zeta_r^G}$ preserves the L-functions, $\vareps$-factors and
$\gamma$-factors.
} 
\end{thm}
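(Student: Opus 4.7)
The plan is to follow the classical strategy of Kazhdan, adapted to inner forms by \cite{Bad1} and extended to parabolic induction by \cite{BHLS}. The construction of $\zeta_r^G$ is the Kazhdan isomorphism: taking the $\C$-linear extension of the double-coset bijection $\zeta_r$, one checks that it is multiplicative by expressing the structure constants of $\mc H(G,K_r)$ as counts of $K_r$-cosets inside a product of double cosets, which depend only on the multiplication table of $\mf o_D/\mf p_D^{l'}$ for some $l'=l'(r,g_1,g_2)$. Since the algebra is generated by the characteristic functions of double cosets with bounded Cartan exponents (thanks to the Iwahori--Matsumoto style presentation used in \cite{Bad1}), it is enough to take $l$ larger than a uniform bound on these $l'$ for a generating set, which is possible by the hypothesis that $F,\widetilde F$ are sufficiently close. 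Once $\zeta_r^G$ is constructed, the equivalence $\overline{\zeta_r^G}$ follows immediately from the Bernstein--Deligne equivalence \eqref{eq:A.3}.

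For parts (a) and (d), I would work directly at the Hecke-algebra level. Unramified characters of $G$ factor through $G/G^1 \cong \Z$, and their pull-backs to $\mc H(G,K_r)$ are visibly matched under $\zeta_r^G$; the central character statement follows because the centre $F^\times \subset G$ acts through $F^\times / (1+\mf p_F^l)$ on $K_r$-fixed vectors, and \eqref{eq:A.7} identifies these quotients with those of $\widetilde F^\times$. Contragredient is realized on $\mc H(G,K_r)$ by the anti-involution $f^\vee(g)=f(g^{-1})$, which obviously intertwines $\zeta_r^G$ and $\zeta_r^{\widetilde G}$ because $\zeta_r$ is compatible with $g\mapsto g^{-1}$ on Cartan representatives.

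For part (c), I would invoke the explicit description of parabolic induction on Hecke algebras given in \cite{BHLS}: using the Iwahori factorization $K_r = (K_r \cap \bar U)(K_r \cap M)(K_r \cap U)$, one obtains an injection $\mc H(M,K_r \cap M) \hookrightarrow \mc H(G,K_r)$ (after a suitable unramified twist), whose construction is purely combinatorial in the Cartan data and is manifestly intertwined by $\zeta_r^G$ and $\zeta_r^M$. Passing to modules gives the required commutative square. Part (b) then follows formally: the Casselman/Langlands criteria for temperedness, essential square-integrability, and cuspidality are phrased in terms of the central characters of the constituents of Jacquet modules, and by (c) combined with (a) these data are preserved.

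The main obstacle is part (e), the preservation of $L$-functions, $\vareps$- and $\gamma$-factors. The strategy is to fix additive characters $\psi,\widetilde\psi$ with the same conductor matched by \eqref{eq:A.2}, choose a Haar measure on $G$ compatible with the matching (this is standard), and then show that the Godement--Jacquet integrals $Z(s,f,\Phi)$ defining these factors are preserved when $f$ is a $K_r$-bi-invariant matrix coefficient and $\Phi$ is a Schwartz function supported on a lattice of high enough level (so that its integral over each $K_r$-double coset is determined by $\mf o_D/\mf p_D^l$). The functional equation then forces the $\vareps$- and $\gamma$-factors to match as well, and the $L$-factor, being the g.c.d.\ of the $Z(s,f,\Phi)$, is preserved. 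For pairs of representations on $\GL_n(F)$ one argues similarly with the Rankin--Selberg integrals \eqref{eq:A.11}, using that Whittaker functionals and their expansions transport under $\zeta_r^G$ once one checks that the gauge character of $U$ factors through $U/(U\cap K_l)$. The delicate point, and the hard part of the proof, is verifying that a single $l$ (sufficiently large in terms of $r$ and $n$) works uniformly for every integrand appearing in the definitions of the factors; this is precisely the compatibility that will feed into Theorem~\ref{thm:1.3}.
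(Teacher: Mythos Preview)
Your proposal for the construction of $\zeta_r^G$ and for parts (a), (c), (d) is essentially the paper's argument (the paper cites \cite[Th\'eor\`eme 2.13]{Bad1} for the isomorphism and \cite{BuKu} for the embedding $t_P:\mc H(M,K_r\cap M)\hookrightarrow\mc H(G,K_r)$ in (c), but the underlying mechanism is what you describe). For (b) the paper simply cites \cite[Th\'eor\`eme 2.17]{Bad1} rather than re-deriving the Casselman criteria, but your route via (a) and (c) is also fine.

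Part (e) is where you diverge. The paper does \emph{not} revisit the Godement--Jacquet integrals directly. Instead it quotes \cite[Th\'eor\`eme 2.19]{Bad1} for the $\gamma$-factors; then it handles $L$-functions by observing that for a supercuspidal $\sigma$ one has $L(s,\sigma)=1$ unless $m=1$ and $\sigma=\chi\circ\mathrm{Nrd}$ with $\chi$ unramified, in which case the single Euler factor is visibly matched by part (a), and the general case reduces to supercuspidals via \cite[\S 3]{Jac} together with (a) and (c); finally $\vareps$ is recovered from the identity $\vareps(s,\pi,\psi)=\gamma(s,\pi,\psi)\,L(s,\pi)/L(1-s,\check\pi)$ and part (d). So the logical order is $\gamma \Rightarrow L \Rightarrow \vareps$, not ``zeta integrals $\Rightarrow$ everything''. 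Your direct approach via $Z(s,f,\Phi)$ would work (indeed it is essentially what Badulescu did to get the $\gamma$-factors), but the paper's route is shorter and sidesteps the ``uniform $l$'' issue you flag at the end.

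One correction: your last paragraph about Rankin--Selberg integrals for \emph{pairs} does not belong in this theorem. Part (e) concerns only the single-variable Godement--Jacquet factors; preservation of factors of pairs is a separate statement (Theorem~\ref{thm:A.3} in the paper) with its own Whittaker-model proof.
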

\begin{proof}
The existence of the isomorphism $\zeta_r^G$ is \cite[Th\'eor\`eme 2.13]{Bad1}. 
The equivalence of categories follows from that and \eqref{eq:A.3}. \\
(a) Let $G^1$ be the subgroup of $G$ generated by all compact subgroups of $G$, that is,
the intersection of the kernels of all unramified characters of $G$. Since $K_r$ and
$\widetilde K_r$ are compact, $\zeta_r$ restricts to a bijection 
\[
K_r \backslash G^1 / K_r \to \widetilde K_r \backslash \widetilde G^1 / \widetilde K_r .
\]
Moreover, because $A^+ \to \widetilde A^+$ respects the group multiplication whenever 
it is defined, the induced bijection $G / G^1 \to \widetilde G / \widetilde G^1$ is in
fact a group isomorphism. Hence $\zeta_r$ induces an isomorphism
\[
\overline{\zeta_r^{G / G^1}} : X_{nr}(G) = \Irr (G / G^1) \to 
\Irr (\widetilde G / \widetilde G^1) = X_{nr}(\widetilde G) ,
\]
which clearly satisfies, for $\pi \in \Mod (G,K_r)$ and $\chi \in X_{nr}(G)$: 
\[
\overline{\zeta_r^G} (\pi \otimes \chi) = \overline{\zeta_r^G} (\pi) 
\otimes \overline{\zeta_r^{G / G^1}} (\chi) .
\]
The central characters can be dealt with similarly. The characters of $Z(G)$ appearing
in Mod$(G,K_r)$ are those of 
\[
Z(G) / Z(G) \cap K_r = F^\times / 1 + \mf p_F^r . 
\]
Now we note that $\zeta_r^G$ and \eqref{eq:A.7} have the same restriction to 
the above group. \\
(b) By \cite[Th\'eor\`eme 2.17]{Bad1}, $\overline{\zeta_r^G}$ preserves cuspidality
and square-integrability modulo centre. Combining the latter with part (a), we find
that it also preserves essential square-integrability. A variation on the proof of
\cite[Th\'eor\`eme 2.17.b]{Bad1} shows that temperedness is preserved as well.
Alternatively, one can note that every irreducible tempered representation in
$\Mod (G,K_r)$ is obtained with parabolic induction from a square-integrable modulo
centre representation in $\Mod (M,M \cap K_r)$, and then apply part (c).\\
(c) This property, and its analogue for Jacquet restriction, are proven in 
\cite[Proposition 3.15]{BHLS}. We prefer a more direct argument.
The constructions in \cite[\S 2]{Bad1} apply equally well to $(M,K_r \cap M)$,
so $\zeta_r$ induces an algebra isomorphism $\zeta_r^M$ and an equivalence of
categories $\overline{\zeta_r^M}$. By \cite[Corollary 7.12]{BuKu} the parabolic
subgroup $P$ determines an injective algebra homomorphism
\[
t_P : \mc H (M,K_r \cap M) \to \mc H (G,K_r) . 
\]
This in turn gives a functor
\[
\begin{array}{cccc}
(t_P)_* : & \Mod (\mc H (M,K_r \cap M)) & \to & \Mod (\mc H (G,K_r)) , \\ 
& V & \mapsto & \Hom_{\mc H (M,K_r \cap M)}(\mc H (G,K_r),V) ,  
\end{array}
\]
where $\mc H (G,K_r)$ and $V$ are regarded as $\mc H (M,K_r \cap M)$-modules
via $t_P$. This is a counterpart of parabolic induction, in the sense that
\begin{equation}\label{eq:A.4}
\begin{array}{ccc}
\Mod (G,K_r) & \to & 
\Mod (\mc H (G,K_r)) \\
\uparrow I_P^G & & \uparrow (t_P)_* \\
\Mod (M,K_r \cap M) & \to & \Mod (\mc H (M,K_r \cap M))
\end{array} 
\end{equation}
commutes \cite[Corollary 8.4]{BuKu}.
The construction of $t_P$ in \cite[\S 7]{BuKu} depends only on properties that 
are preserved by $\zeta_r^G$ (and its counterparts for other groups), so 
\begin{equation}\label{eq:A.5}
\begin{array}{ccc}
\mc H (G,K_r) & \to & 
\mc H (\widetilde G,\widetilde K_r) \\
\uparrow (t_P)_*  & & \uparrow (t_{\widetilde P})_* \\
\mc H (M,K_r \cap M) & \to & \mc H (\widetilde M,\widetilde K_r \cap \widetilde M)
\end{array}  
\end{equation}
commutes. Now we combine \eqref{eq:A.5} with \eqref{eq:A.4} for $G$ and $\widetilde G$.\\
(d) The contragredient of a $\mc H (G,K_r)$-module is defined via the involution
$f^* (g) = f (g^{-1})$.
The equivalence of categories \eqref{eq:A.3} commutes with the formation of 
contragredients because $(V^*)^{K_r} \cong (V^{K_r})^*$. The map $\overline{\zeta_r^G}$
does so because $\zeta_r^G$ commutes with the involution *. \\
(e) For the $\gamma$-factors see \cite[Th\'eor\`eme 2.19]{Bad1}.

Consider the L-function of a supercuspidal $\sigma \in \Irr (G,K_r)$.
By \cite[Propositions 4.4 and 5.11]{GoJa} $L(s,\sigma) = 1$ unless $m = 1$ and $\sigma = 
\chi \circ $Nrd with $\chi : F^\times \to \C^\times$ unramified. This property is preserved 
by $\zeta_r^G$, so $L \big( s,\overline{\zeta_r^G}(\sigma) \big) = 1$ if the condition is 
fulfilled. In the remaining case 
\[
L(s,\sigma) = L(s + (d-1)/2,\chi) =  \big( 1 - q^{-s + (1-d)/2} \chi (\varpi_F) \big)^{-1} .
\] 
The proof of part (a) shows that $\overline{\zeta_r^G}(\sigma) = 
\chi \circ \zeta_r^{F^\times} \circ$ Nrd, so
\[
L \big( s,\overline{\zeta_r^G}(\sigma) \big) = 
\big( 1 - q^{-s + (1-d)/2} \chi (\zeta_r^{F^\times} (\varpi_{\widetilde F})) \big)^{-1} =
\big( 1 - q^{-s + (1-d)/2} \chi (\varpi_F) \big)^{-1} .
\]
Thus $\overline{\zeta_r^G}$ preserve the L-functions of supercuspidal representations.
By \cite[\S 3]{Jac} the L-functions of general $\pi \in \Irr (G,K_r)$ are determined by the
L-functions of supercuspidal representations of Levi subgroups of $G$, in combination
with parabolic induction and twisting with unramified characters. In view of parts (a),(c)
and the above, this implies that $\overline{\zeta_r^G}$ always preserves L-functions.

Now the relation
\[
\epsilon (s,\pi,\psi) = \gamma (s,\pi,\psi) \frac{L(s,\pi)}{L(1-s,\check \pi)} 
\]
and part (d) show that $\overline{\zeta_r^G}$ preserves $\epsilon$-factors.
\end{proof}

In \cite{Bad3} Badulescu showed that Theorem \ref{thm:A.1} has an analogue for $G^\sharp$ 
and $\widetilde{G}^\sharp$, which can easily be deduced from Theorem \ref{thm:A.1}.
We quickly recall how this works. Note that $M$ is a central extension of 
$M^\sharp = \{ m \in M : \mathrm{Nrd}(m) = 1\}$. 
A few properties of the reduced norm \cite[\S IX.2 and equation IX.4.9]{Wei2} entail 
\begin{equation}\label{eq:A.6}
\begin{split}
& \mr{Nrd}(K_r \cap M) = \mr{Nrd}(1 + \mf p_D^r) = 1 + \mf p_F^r , \\
& M^\sharp (K_r \cap M) = \{ m \in M : \mr{Nrd}(m) \in 1 + \mf p_F^r \}.  
\end{split}
\end{equation}
Choose the Haar measures on $M$ and $M^\sharp$ so that vol$(K_r \cap M) = 
\mr{vol}(K_r \cap M^\sharp)$. The inclusion $M^\sharp \to M$ induces an algebra isomorphism
\begin{multline*}
\mc H (M^\sharp ,K_r \cap M^\sharp) \to \mc H (M^\sharp (K_r \cap M),K_r \cap M) \\ :=
\{ f \in \mc H(M,K_r \cap M) : \mr{supp}(f) \subset M^\sharp (K_r \cap M) \} . 
\end{multline*}
In view of \eqref{eq:A.6} and the isomorphism $\mf o_F / \mf p_F^r \cong
\mf o_{\widetilde F} / \mf p_{\widetilde F}^r ,\; \zeta_r^M$ yields a bijection
\[
\mc H (M^\sharp (K_r \cap M),K_r \cap M) \to \mc H (\widetilde M^\sharp 
(\widetilde K_r \cap \widetilde M),\widetilde K_r \cap \widetilde M). 
\]
Hence it induces an algebra isomorphism
\[
\zeta_r^{M^\sharp} : \mc H (M^\sharp,K_r \cap M^\sharp) \to 
\mc H (\widetilde M^\sharp,\widetilde K_r \cap \widetilde M^\sharp) .
\]
\begin{cor}\label{cor:A.2}
Theorem \ref{thm:A.1} (except part e) also holds for the corresponding subgroups 
of elements with reduced norm 1. 
\end{cor}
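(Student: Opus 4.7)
The plan is to deduce each of parts (a)--(d) for $G^\sharp$ from the already established statement for $G$, using the fact that $\zeta_r^{G^\sharp}$ was built as the restriction of $\zeta_r^G$ to the subalgebra $\mathcal H(G^\sharp(K_r\cap G),K_r\cap G)$ of $\mathcal H(G,K_r)$. The first step is to make this compatibility precise on the level of modules: if $\pi\in\Mod(G,K_r)$, then restricting $\pi^{K_r}$ along $\zeta_r^{G^\sharp}\hookrightarrow \zeta_r^G$ coincides, under the equivalences of categories \eqref{eq:A.3}, with taking $(K_r\cap G^\sharp)$--invariants on $\mr{Res}_{G^\sharp}^G\pi$. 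Since every irreducible in $\Mod(G^\sharp,K_r\cap G^\sharp)$ appears as a constituent of $\mr{Res}_{G^\sharp}^G\pi$ for some $\pi\in\Irr(G,K_r)$ (take any irreducible quotient of $\mr{Ind}_{G^\sharp}^G\pi^\sharp$, which lies in $\Mod(G,K_r)$ because $K_r\subset G^\sharp(K_r\cap Z(G))$ up to shifting by $Z(G)$), one gets the intertwining
\[
\overline{\zeta_r^{G^\sharp}}\circ\mr{Res}_{G^\sharp}^G \;\cong\; \mr{Res}_{\widetilde G^\sharp}^{\widetilde G}\circ\overline{\zeta_r^G}.
\]

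With this in hand the various properties transfer as follows. For (a), unramified characters of $G^\sharp$ are pullbacks of unramified characters of $G$ along the natural map $G^\sharp\to G/Z(G)$; more precisely restriction induces a surjection $X_{nr}(G)\twoheadrightarrow X_{nr}(G^\sharp)$ (whose kernel consists of characters trivial on $G^\sharp$) which is compatible with $\overline{\zeta_r^{G/G^1}}$ from the proof of Theorem \ref{thm:A.1}(a). The statement about central characters is even simpler because $Z(G^\sharp)$ is compact. For (b), cuspidality, essential square-integrability and temperedness of an irreducible $\pi^\sharp\in\Irr(G^\sharp,K_r\cap G^\sharp)$ are each equivalent to the same property for any (equivalently: some) irreducible $\pi\in\Irr(G,K_r)$ containing $\pi^\sharp$ in its restriction, by a standard argument using that $G^\sharp Z(G)$ has finite index in $G$; the corresponding property of $\overline{\zeta_r^G}\pi$ established in Theorem \ref{thm:A.1}(b) then yields the claim via the intertwining displayed above. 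For (c), the identity $I_{P^\sharp}^{G^\sharp}\circ\mr{Res}_{M^\sharp}^M\cong\mr{Res}_{G^\sharp}^G\circ I_P^G$ (which follows from $G=P\cdot G^\sharp Z(G)$ and the definition of normalized parabolic induction) combined with Theorem \ref{thm:A.1}(c) for the groups $G$ and $M$ reduces the claim to the intertwining on both $G$ and $M$. For (d), restriction commutes with the formation of contragredients, so the statement follows from Theorem \ref{thm:A.1}(d) together with the intertwining.

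The main point requiring care is the first step: verifying that $\overline{\zeta_r^{G^\sharp}}$ is truly a ``restriction'' of $\overline{\zeta_r^G}$ in the sense above, rather than a merely parallel construction. This hinges on the explicit description of $\zeta_r^{M^\sharp}$ given just before the corollary: the identification $\mathcal H(M^\sharp,K_r\cap M^\sharp)\cong\mathcal H(M^\sharp(K_r\cap M),K_r\cap M)$ via the chosen Haar measures matches $K_r\cap M$--double cosets in $M^\sharp(K_r\cap M)$ with $(K_r\cap M^\sharp)$--double cosets in $M^\sharp$, and a short Mackey computation comparing $K_r\cap G$--invariants with $K_r\cap G^\sharp$--invariants under restriction to $G^\sharp$ does the rest. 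Once this bookkeeping is done, parts (a)--(d) all fall out essentially for free.
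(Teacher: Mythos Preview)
Your approach differs from the paper's: the paper simply observes that the algebra isomorphisms $\zeta_r^{M^\sharp}$ are in hand and reruns the argument of Theorem \ref{thm:A.1} verbatim for the $\sharp$-groups, whereas you try to deduce the $\sharp$-version from the $G$-version via a compatibility between $\overline{\zeta_r^{G^\sharp}}$ and restriction $\mr{Res}^G_{G^\sharp}$.

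The gap is in your ``first step''. You claim that for $\pi\in\Mod(G,K_r)$, restricting the $\mc H(G,K_r)$-module $\pi^{K_r}$ to the subalgebra $\mc H(G^\sharp,K_r\cap G^\sharp)\cong\mc H(G^\sharp K_r,K_r)$ corresponds, under the equivalence \eqref{eq:A.3}, to $\mr{Res}^G_{G^\sharp}\pi$. But the equivalence \eqref{eq:A.3} for $G^\sharp$ sends a representation $V$ to $V^{K_r\cap G^\sharp}$, so your claim would force
\[
\pi^{K_r} \;=\; (\mr{Res}^G_{G^\sharp}\pi)^{K_r\cap G^\sharp} \;=\; \pi^{K_r\cap G^\sharp}
\]
as $\mc H(G^\sharp,K_r\cap G^\sharp)$-modules. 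This is false in general: one only has $\pi^{K_r}\subset\pi^{K_r\cap G^\sharp}$, and the quotient decomposes under $K_r/(K_r\cap G^\sharp)\cong 1+\mf p_F^r$ (via $\mr{Nrd}$) into nontrivial characters, which need not vanish. Since $K_r$ is not generated by $K_r\cap G^\sharp$ and $K_r\cap Z(G)$ when $p\mid n$, you cannot kill these extra pieces by a central-character argument either. Consequently the displayed intertwining $\overline{\zeta_r^{G^\sharp}}\circ\mr{Res}^G_{G^\sharp}\cong\mr{Res}^{\widetilde G}_{\widetilde G^\sharp}\circ\overline{\zeta_r^G}$ is not established by what you wrote, and every subsequent reduction (b), (c), (d) rests on it.

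One can try to repair this by tracking each $\chi$-isotypic piece $\pi^{K_r\cap G^\sharp,\chi}$ separately (identifying it with $(\pi\otimes(\chi^{-1}\circ\mr{Nrd}))^{K_r}$ and invoking Theorem \ref{thm:A.1}(a)), but at that point you are essentially reproving the Hecke-algebra statements for $G^\sharp$ directly, which is exactly the paper's route and is shorter.
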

\begin{proof}
Using the isomorphisms $\zeta_r^{M^\sharp}$, this can be proven in the same way as
Theorem \ref{thm:A.1} itself.
\end{proof}

As preparation for the next section, we will show that in certain special cases
the functors $\overline{\zeta_r^G}$ preserve the L-functions, $\vareps$-factors
and $\gamma$-factors of pairs of representations, as defined in \cite{JPS2}.

Suppose that $\widetilde{F}$ is $l$-close to $F$ and that 
$\widetilde{\psi} : \widetilde{F} \to \C^\times$ is a character which is trivial
on $\mf o_{\widetilde F}$. We say that $\widetilde{\psi}$ is $l$-close to $\psi$ if 
$\widetilde{\psi} \big|_{\varpi_{\widetilde F}^{-l} \mf o_{\widetilde F} / \mf o_{\widetilde F}}$ 
corresponds to $\psi \big|_{\varpi_F^{-l} \mf o_F / \mf o_F}$ under the isomorphisms
\[
\varpi_{\widetilde F}^{-l} \mf o_{\widetilde F} / \mf o_{\widetilde F} \cong 
\mf o_{\widetilde F} / \varpi_{\widetilde F}^l \mf o_{\widetilde F} \cong
\mf o_F / \varpi_F^l \mf o_F \cong \varpi_F^{-l} \mf o_F / \mf o_F .
\]

\begin{thm}\label{thm:A.3}
Assume that $F$ and $\widetilde{F}$ are $l$-close for some $l > r$ and that
$\widetilde{\psi}$ is $l$-close to $\psi$. Let $\pi \in \Irr (\GL_n (F),K_{r,n})$ be 
supercuspidal and let $\pi' \in \Irr (\GL_{n-1}(F),K_{r,n-1})$ be generic. Then
\begin{align*}
& L \big( s, \overline{\zeta_r^{\GL_n (F)}}(\pi) \times 
\overline{\zeta_r^{\GL_{n-1}(F)}}(\pi') \big) \; = \; L (s,\pi \times \pi') \; = \; 1, \\
& \vareps \big( s, \overline{\zeta_r^{\GL_n (F)}}(\pi) \times 
\overline{\zeta_r^{\GL_{n-1}(F)}}
(\pi'), \widetilde{\psi} \big) \; = \; \vareps (s,\pi \times \pi',\psi) ,\\
& \gamma \big( s, \overline{\zeta_r^{\GL_n (F)}}(\pi) \times \overline
{\zeta_r^{\GL_{n-1}(F)}}
(\pi'), \widetilde{\psi} \big) \; = \; \gamma (s,\pi \times \pi',\psi) .
\end{align*}
\end{thm}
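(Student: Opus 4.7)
The plan is to reduce the three assertions to a single identity of Rankin--Selberg integrals and then transport that identity across $\overline{\zeta_r}$. The $L$-factor equality is immediate: $\pi$ is supercuspidal and $n-1<n$, so \cite[Theorem 8.1]{JPS2} gives $L(s,\pi\times\pi')=1$; by Theorem~\ref{thm:A.1}(b), $\overline{\zeta_r^{\GL_n(F)}}(\pi)$ is still supercuspidal, so the same reference yields $L\bigl(s,\overline{\zeta_r^{\GL_n(F)}}(\pi)\times\overline{\zeta_r^{\GL_{n-1}(F)}}(\pi')\bigr)=1$. With both L-factors trivial, \eqref{eq:A.13} makes the $\gamma$- and $\varepsilon$-factor statements equivalent, and \eqref{eq:A.12} rewrites the $\varepsilon$-factor as a ratio of Rankin--Selberg integrals. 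So it suffices to establish
\[
\Psi(s,W,W')=\Psi\bigl(s,\widetilde W,\widetilde W'\bigr)
\qquad\text{and}\qquad
\Psi(1-s,\check W,\check W')=\Psi\bigl(1-s,\check{\widetilde W},\check{\widetilde W}'\bigr)
\]
for compatible choices of Whittaker data; by Theorem~\ref{thm:A.1}(d) the close-fields equivalence exchanges contragredients, so the two identities will follow from one construction.

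I would pick a $K_{r,n}$-fixed vector $v\in\pi$ and a $K_{r,n-1}$-fixed vector $v'\in\pi'$, realize them in the Whittaker models as $W=W_v\in W(\pi,\theta)$ and $W'=W_{v'}\in W(\pi',\theta)$, and take $\widetilde W,\widetilde W'$ to be Whittaker realizations of the $\widetilde K_r$-fixed vectors in $\overline{\zeta_r^{\GL_n(F)}}(\pi)$, $\overline{\zeta_r^{\GL_{n-1}(F)}}(\pi')$ corresponding to $v,v'$ under the category equivalence of \eqref{eq:A.3}. The integrand of \eqref{eq:A.11} is right $K_{r,n-1}$-invariant in $g$ (both $k\in K_{r,n-1}$ acting on $g$ and the corresponding $\mathrm{diag}(k,1)\in K_{r,n}$ preserve it), so $\Psi(s,W,W')$ collapses, in the sense of analytic continuation from the convergent region, to a weighted sum over $U_{n-1}\backslash\GL_{n-1}(F)/K_{r,n-1}$.

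The same Cartan-type decomposition that underlies \eqref{eq:A.1} puts $U_{n-1}\backslash\GL_{n-1}(F)/K_{r,n-1}$ in bijection with its $\widetilde F$-analogue and matches the factors $|\det g|^{s-1/2}$ and the volume contributions. Because $l>r$ ensures that $\psi$ and $\widetilde\psi$ correspond on $\varpi_F^{-l}\mathfrak o_F/\mathfrak o_F$, the Whittaker characters $\theta,\widetilde\theta$ also agree on the unipotent parts of the double-coset representatives. If in addition $W(\mathrm{diag}(g,1))$ equals $\widetilde W(\mathrm{diag}(\widetilde g,1))$ on corresponding representatives, the two sums coincide term by term; the same argument applied to the contragredient pair, valid by Theorem~\ref{thm:A.1}(d), yields the second integral identity. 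Feeding both into \eqref{eq:A.12} gives the $\varepsilon$-factor equality, and \eqref{eq:A.13} then gives the $\gamma$-factor equality.

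The main obstacle is this matching of Whittaker values on corresponding double cosets. I would handle it by fixing Whittaker functionals $\lambda$ on $\pi$ and $\widetilde\lambda$ on $\overline{\zeta_r^{\GL_n(F)}}(\pi)$ normalized compatibly with $\zeta_r^{\GL_n(F)}$: uniqueness of Whittaker functionals up to scalar leaves one ambiguity on each side, which I would fix by demanding $\lambda(v)=\widetilde\lambda(\widetilde v)$ for a single chosen $K_r$-fixed vector. Since the Hecke module $V^{K_r}$, together with the restriction of $\lambda$, is determined by the action of $\mathcal H(G,K_r)$ on finitely many double cosets---and this action is preserved by $\zeta_r^{\GL_n(F)}$ (Theorem~\ref{thm:A.1})---the values of $W_v$ and of $\widetilde W_{\widetilde v}$ on corresponding Cartan representatives are forced to be equal. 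The role of the strict inequality $l>r$ (rather than $l\ge r$) is precisely that $\theta$ is built from $\psi$, whose conductor is $\mathfrak o_F$, one level finer than the $K_r$-invariance; so one extra unit of closeness is needed to intertwine the Whittaker characters on the unipotent parts carried by the double-coset bijection.
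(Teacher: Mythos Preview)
Your overall strategy is the same as the paper's: kill the $L$-functions with \cite[Theorem 8.1]{JPS2}, reduce the $\varepsilon$- and $\gamma$-factor equalities to one another via \eqref{eq:A.13}, and then match the Rankin--Selberg integrals in \eqref{eq:A.12} term by term across a bijection of cosets. Two substantive steps are missing from your argument.

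\textbf{Matching of Whittaker values.} This is where the real content is, and your justification does not go through. You propose to normalize $\lambda$ and $\widetilde\lambda$ by a single condition $\lambda(v)=\widetilde\lambda(\widetilde v)$ and then assert that the Hecke-module isomorphism forces $W_v(g)=\widetilde W_{\widetilde v}(\widetilde g)$ on all corresponding representatives. But $W_v(g)=\lambda(\pi(g)v)$, and for a coset representative $g$ the vector $\pi(g)v$ is typically \emph{not} $K_r$-fixed; the Hecke isomorphism only controls averages $\int_{K_r g K_r}\pi(\cdot)v\,dg$, hence only linear combinations of the values $W_v(g)$, not the individual values. One normalization equation on a space $V^{K_r}$ of dimension $>1$ cannot pin down all of $\lambda|_{V^{K_r}}$, let alone the values of $\lambda$ on vectors outside $V^{K_r}$. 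The paper does not argue via the abstract Hecke module at all: it uses the Iwasawa decomposition $\GL_n(F)=U_nA_{\varpi_F}K_{0,n}$ (not the Cartan decomposition behind \eqref{eq:A.1}) to build an explicit coset bijection $\zeta'_r$ as in \eqref{eq:A.16}, and then invokes Lemaire's result \cite[Lemme 3.2.1]{Lem}, which produces a linear bijection $\rho_n\colon W(\pi,\theta)^{K_{r,n}}\to W(\overline{\zeta_r}(\pi),\widetilde\theta)^{\widetilde K_{r,n}}$ that literally transports values along $\zeta'_r$. That lemma is the nontrivial ingredient your sketch is missing; the hypothesis that $\widetilde\psi$ is $l$-close to $\psi$ with $l>r$ is exactly what is needed to apply it.

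\textbf{Nonvanishing of the integral.} You do not check that $\Psi(s,W,W')\neq 0$ for some admissible choice. Without this, \eqref{eq:A.12} degenerates to $0=0$ and says nothing about $\varepsilon$. The paper treats this separately: given a nonzero $W'\in W(\pi',\theta)^{K_{r,n-1}}$ and $g_0$ with $W'(g_0)\neq 0$, it applies \cite[Lemme 2.4.1]{Hen1} to produce $W\in W(\pi,\theta)$ whose restriction to $\GL_{n-1}(F)$ is supported on $U_{n-1}g_0K_{r,n-1}$ and coincides with $W'$ there, making $\Psi(s,W,W')=|W'(g_0)|^2\,|\det g_0|_F^{s-1/2}\cdot\mu(\cdots)\neq 0$. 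Note that this $W$ is only required to be right $K_{r,n-1}$-invariant on the embedded $\GL_{n-1}(F)$; insisting on a $K_{r,n}$-fixed vector as you do may not give enough flexibility for this step.
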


\begin{proof}
Since $\pi$ and $\check \pi$ are supercuspidal, whereas $\pi'$ and $\check \pi'$ are 
representations of a general linear group of lower rank, \cite[Theorem 8.1]{JPS2} assures that
all the L-functions appearing here are 1. By \eqref{eq:A.13} this implies that the relevant
$\gamma$-factors are equal to the $\vareps$-factors of the same pairs. Hence it suffices to
prove the claim for the $\vareps$-factors. 
We note that by Theorem \ref{thm:A.1} 
\begin{equation}\label{eq:A.20}
\omega_{\pi'}(-1)^{n-1} = 
\omega_{\overline{\zeta_r^{\GL_{n-1}(F)}}\pi'} (-1)^{n-1} ,
\end{equation}
so from \eqref{eq:A.12} we see that it boils down to comparing the integrals $\Psi (s,W,W')$ 
and $\Psi (1-s,\check W,\check W')$ with their versions for $\widetilde F$.

Fix a Whittaker functional $\lambda'$ for $(\pi',V')$ and a vector $v' \in V^{K_{r,n-1}}$. 
Then $W' := W_{v'} \in W (\pi',\theta)$ is right $K_{r,n-1}$-invariant. Similarly we pick 
$W = W_v \in W(\pi,\theta)$, but now we have to require only that $W$ is right invariant under 
$K_{r,n-1}$ on $\GL_{n-1}(F) \subset \GL_n (F)$. Because $\theta$ is unitary, the function
\[
\GL_{n-1}(F) \to \C \colon g \mapsto W\matje{g}{0}{0}{1} \overline{W'(g)}
\]
is constant on sets of the form $U_{n-1} g K_{r,n-1}$. Since the subgroup 
$K_{r,n-1}$ is stable under the automorphism $g \mapsto g^{-T}$, the functions 
$\check W$ and $\check W'$ are also right $K_{r,n-1}$-invariant. Both transform under 
left translations by $U_{n-1}$ as $\overline{\theta}$, so
\[
\GL_{n-1}(F) \to \C \colon g \mapsto
\check W \matje{g}{0}{0}{1} \overline{\check W'(g)}
\]
defines a function $U_{n-1} \backslash \GL_{n-1}(F) / K_{r,n-1} \to \C$. Since 
$\det (K_{r,n-1}) \subset \mf o_F^\times$ and $\det (U_{n-1}) = 1 $, 
the function $| \det |_F$ can also be regarded as a map \\
$U_{n-1} \backslash \GL_{n-1}(F) / K_{r,n-1} \to \C$.

Now the idea is to transfer these functions to objects over $\widetilde{F}$ by means 
of the Iwasawa decomposition as in \cite[\S 3]{Lem}, and to show that neither side of
\eqref{eq:A.12} changes.

Let $A_{\varpi_F} \subset \GL_{n'}(F)$ be the group of diagonal matrices all whose 
entries are powers of $\varpi_F$. The Iwasawa decomposition states that
\begin{equation}\label{eq:A.15}
\GL_{n}(F) = \bigsqcup\nolimits_{a \in A_{\varpi_F}} U_{n} a K_{0,n} .
\end{equation}
This, the canonical bijection $A_{\varpi_F} \to A_{\varpi_{\widetilde F}} 
\colon a \mapsto \widetilde{a}$ 
and the isomorphism $\GL_{n}(\lambda_r)$ from \eqref{eq:A.14} combine to a bijection
\begin{equation}\label{eq:A.16}
\begin{aligned}
\zeta'_r \colon U_{n} \backslash & \GL_{n}(F) / K_{r,n} & \to & \;
\widetilde{U}_{n} \backslash \GL_{n}(\widetilde{F}) / \widetilde{K}_{r,n} , \\
& U_{n} \,a k\, K_{r,n} & \mapsto &
\; \widetilde{U}_{n} \,\widetilde{a} \GL_{n}(\lambda_r)(k) \, \widetilde{K}_{r,n} .
\end{aligned}
\end{equation}
Because $\widetilde \psi$ is $l$-close to $\psi$ we may apply \cite[Lemme 3.2.1]{Lem}, which
says that there is a unique linear bijection 
\begin{equation}\label{eq:A.8}
\rho_{n} \colon W(\pi,\theta)^{K_{r,n}} \to 
W \big( \overline{\zeta_r^{\GL_{n}(F)}}(\pi),\widetilde{\theta} \big)^{\widetilde{K}_{r,n}}
\end{equation}
which transforms the restriction of functions to $A_{\varpi_F} K_{0,n}$ 
according to $\zeta'_r$. We will use \eqref{eq:A.16} and \eqref{eq:A.8} also with $n-1$ instead of $n$.

Put $\widetilde{W} = \rho_n(W)$ and $\widetilde{W'} = \rho_{n-1} (W')$. As \eqref{eq:A.16}
commutes with $g \mapsto g^{-T}$, 
\begin{equation}\label{eq:A.17}
\check{\widetilde{W}} = \rho_n (\check{W}) \text{ and } 
\check{\widetilde{W'}} = \rho_{n-1} (\check{W'}) .
\end{equation}
These constructions entail that 
\[
\GL_{n-1}(\widetilde{F}) \to \C \colon \widetilde{g} \mapsto 
\widetilde{W} \matje{\widetilde{g}}{0}{0}{1} \overline{\widetilde{W}'(\widetilde{g})}
\]
defines a function $\widetilde{U}_{n-1} \backslash \GL_{n-1}(\widetilde{F}) / 
\widetilde{K}_{r,n-1} \to \C$, and that
\begin{equation}\label{eq:A.18}
W \matje{g}{0}{0}{1} \overline{W' (g)}
= \widetilde{W} \matje{\zeta'_r (g)}{0}{0}{1} \overline{\widetilde{W}'(\zeta'_r (g))} .
\end{equation}
It follows immediately from the definition of $\zeta'_r$ that 
\begin{equation}\label{eq:A.19}
| \det (\zeta'_r (g)) |_{\widetilde F} = | \det (g) |_F .
\end{equation}
For the computation of $\Psi (s,W,W')$ we may normalize the measure $\mu$ such that 
every double coset $U_{n-1} \backslash U_{n-1} g K_{r,n-1}$ has volume $1$, and similarly 
for the measure on $\widetilde{U}_{n-1} \backslash \GL_{n-1}(\widetilde F)$.
The equalities \eqref{eq:A.18} and \eqref{eq:A.19} imply
\begin{multline*}
\Psi (s,W,W') = \sum_{g \in A_{\varpi_F} K_{0,n-1} / K_{r,n-1}} 
W \matje{g}{0}{0}{1} \overline{W'(g)} | \det (g) |_F^{s - 1/2} \\
= \sum_{\widetilde{g} \in A_{\varpi_{\widetilde F}}  
\widetilde{K}_{0,n-1} / \widetilde{K}_{r,n-1}} \widetilde{W} 
\matje{\widetilde{g}}{0}{0}{1} 
\overline{\widetilde{W}'(\widetilde{g})} | \det (\widetilde{g}) 
|_{\widetilde F}^{s - 1/2} = 
\Psi (s,\widetilde{W},\widetilde{W}').
\end{multline*}
An analogous computation, additionally using \eqref{eq:A.17}, shows that 
\[
\Psi (s,\check W,\check W') = \Psi( s,\check{\widetilde{W}}, \check{\widetilde{W}'}) .
\]
The previous two equalities and \eqref{eq:A.20} prove that all terms in \eqref{eq:A.12},
expect possibly the $\vareps$-factors, have the same values as the corresponding
terms defined over $\widetilde F$. To establish the desired equality of $\vareps$-factors,
it remains to check that $\Psi (s,W,W')$ is nonzero for a suitable choice of 
right $K_{r,n-1}$-invariant functions $W$ and $W'$. 

Take $v'$ as above, but nonzero. Then $W' = W_{v'}$ is nonzero because $V' \cong W(\pi',\theta)$. 
Choose $g_0 \in \GL_{n-1}(F)$ with $W'(g_0) \neq 0$ and define $H : \GL_{n-1}(F) \to \C$ by
$H(g) = W'(g)$ if $g \in U_{n-1} g_0 K_{r,n-1}$ and $H(g) = 0$ otherwise.
According to \cite[Lemme 2.4.1]{Hen1}, there exists $W \in W(\pi,\psi)$ such that 
$W \matje{g}{0}{0}{1} = H(g)$ for all $g \in \GL_{n-1}(F)$. Notice that such a $W$ is 
automatically right invariant under $K_{r,n-1}$ on $\GL_{n-1}(F) \subset \GL_n (F)$. 
Now we can easily compute the required integral:
\begin{align*}
\Psi (s,W,W') & = \int_{U_{n-1} \backslash \GL_{n-1}(F)} |H(g)|^2 |\det (g)|_F^{s - 1/2} 
\textup{d} \mu (g) \\ 
& = \int_{U_{n-1} \backslash U_{n-1} g_0 K_{r,n-1}} |W' (g)|^2 
|\det (g)|_F^{s - 1/2} \textup{d} \mu (g) \\
& = \mu (U_{n-1} \backslash U_{n-1} g_0 K_{r,n-1}) |W'(g_0)|^2 |\det (g_0) |_F^{s - 1/2} 
\neq 0 . \qedhere
\end{align*}
\end{proof}

\section{Close fields and Langlands parameters}
\label{sec:CL}

The section is based on Deligne's comparison of the Galois groups of close fields. 
According to \cite[(3.5.1)]{Del} the isomorphism
\eqref{eq:A.2} gives rise to an isomorphism of profinite groups
\begin{equation}\label{eq:8.8}
\text{Gal}(F_s / F) / \text{Gal}(F_s / F)^l \cong \text{Gal}(\widetilde{F}_s / 
\widetilde{F}) / \text{Gal}(\widetilde{F}_s / \widetilde{F})^l ,
\end{equation}
which is unique up to inner automorphisms. Since both $\mathbf{W}_F$ and 
$\mathbf{W}_{\widetilde F}$ can be described in terms of automorphisms of the residue 
field $\mf o_F / \mf p_F \cong \mf o_{\widetilde F} / \mf p_{\widetilde F}$, 
\eqref{eq:8.8} restricts to an isomorphism
\begin{equation}\label{eq:8.9}
\mathbf{W}_F / \text{Gal}(F_s / F)^l \cong 
\mathbf{W}_{\widetilde F} / \text{Gal}(\widetilde{F}_s / \widetilde{F})^l .
\end{equation}
We fix such isomorphism \eqref{eq:8.8}, and hence \eqref{eq:8.9} as well. Another
choice would correspond to another separable closure of $F$, so that is harmless when
it comes to Langlands parameters. Take $r < l$ and recall the map 
$\mathbf{W}_F / \text{Gal}(F_s / F)^l \to F^\times / 1 + \mf p_F^r$ from local class 
field theory. By \cite[Proposition 3.6.1]{Del} the following diagram commutes:
\begin{equation}\label{eq:8.1}
\begin{array}{ccc}
F^\times / 1 + \mf p_F^r & \xrightarrow{\zeta_r} & 
\widetilde{F}^\times / 1 + \mf p_{\widetilde F}^r \\
\uparrow & & \uparrow \\
\mathbf{W}_F / \text{Gal}(F_s / F)^l & \xrightarrow{} & 
\mathbf{W}_{\widetilde F} / \text{Gal}(\widetilde{F}_s / \widetilde{F})^l
\end{array} .
\end{equation}
Notice that $G$ and $\widetilde G$ have the same Langlands dual group, namely 
$\GL_n (\C)$. Hence \eqref{eq:8.9} induces a bijection
\begin{equation}\label{eq:8.10}
\Phi_l^\zeta : \Phi_l (G) \to \Phi_l (\widetilde{G}) .
\end{equation}
In fact $\Phi_l^\zeta$ is already defined on the level of Langlands parameters 
without conjugation-equivalence, and in that sense $\Phi_l^\zeta (\phi)$ and $\phi$ 
always have the same image in $\GL_n (\C)$.
We remark that $\Phi_l^\zeta$ can be defined in the same way for $G^\sharp$ and 
$\widetilde{G}^\sharp$, because these groups have the common Langlands dual group 
$\PGL_n (\C)$.

We will prove that $\Phi_l^\zeta$ describes the effect that 
\[
\overline{\zeta_r^G} \colon \Irr (G,K_r) \to \Irr (\widetilde{G},\widetilde{K_r}) 
\]
has on Langlands parameters, when $l$ is large enough compared to $r \in \Z_{>0}$. 
First we do so for general linear groups over fields. The next result improves on
\cite[Corollary 8.8]{Gan2} and \cite[Theorem 2.3.11]{Gan1} in the sense that it gives 
an explicit lower bound on the $l$ for which the statement holds. 
However, our bound $l > 2^{n-1} r$ is still excessively large. We expect that 
the result is valid whenever $l > r$, but we did not manage to prove that.

\begin{thm}\label{thm:8.1}
Let $r \in \Z_{>0}$ and suppose that $F$ and $\widetilde F$ are $l$-close for some
$l > 2^{n-1} r$. Then the following diagram commutes:
\[
\begin{array}{ccc}
\Irr (\GL_n (F),K_r) & \xrightarrow{\overline{\zeta_r^{\GL_n (F)}}} & 
\Irr \big( \GL_n (\widetilde{F}), \widetilde{K}_r \big) \\
\downarrow \mathrm{rec}_{F,n} & & \downarrow \mathrm{rec}_{\widetilde{F},n} \\
\Phi_l (\GL_n (F)) & \xrightarrow{\Phi^\zeta_l} & \Phi_l \big( \GL_n (\widetilde {F}) \big) 
\end{array}
\]
\end{thm}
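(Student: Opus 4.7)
The plan is to induct on $n$. The base case $n=1$ reduces to the commutativity of diagram \eqref{eq:8.1}: both $\rec_{F,1}$ and $\rec_{\widetilde F,1}$ are inverse to the Artin reciprocity maps, and the horizontal map $\zeta_r\colon F^\times/(1+\mf p_F^r)\to \widetilde F^\times/(1+\mf p_{\widetilde F}^r)$ of \eqref{eq:A.7} is exactly how $\overline{\zeta_r^{\GL_1(F)}}$ acts on central characters by Theorem \ref{thm:A.1}(a).

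For the inductive step I would first dispose of the non-supercuspidal case. Any such $\pi\in\Irr(\GL_n(F),K_r)$ is the Langlands quotient $L(P,\omega)$ of $I_P^G(\omega)$ for a proper standard Levi $M\subsetneq\GL_n(F)$ and an essentially square-integrable $\omega\in\Irr(M)$ whose supercuspidal support involves only strictly smaller linear groups, and necessarily $\omega\in\Mod(M,K_r\cap M)$. Theorem \ref{thm:A.1}(c) yields $\overline{\zeta_r^G}(\pi)=L(\widetilde P,\overline{\zeta_r^M}(\omega))$, and the LLC of a Langlands quotient is determined by the parameters of its supercuspidal support through a combinatorial Zelevinsky-segment construction that commutes trivially with post-composition by the Deligne transport $\Phi_l^\zeta$. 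Combined with the inductive hypothesis applied factor by factor on $M$, this settles the non-supercuspidal case.

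It remains to treat a supercuspidal $\pi\in\Irr(\GL_n(F),K_r)$ with $n\ge 2$. Set $\phi=\rec_{F,n}(\pi)$, $\widetilde\pi=\overline{\zeta_r^{\GL_n(F)}}(\pi)$ and $\widetilde\phi=\Phi_l^\zeta(\phi)$; the goal $\rec_{\widetilde F,n}(\widetilde\pi)=\widetilde\phi$ will be established by checking the hypotheses of Theorem \ref{thm:LLCr}(a) for the pair $(\widetilde\pi,\widetilde\phi)$. Supercuspidality of $\widetilde\pi$ comes from Theorem \ref{thm:A.1}(b); since $\widetilde\phi$ shares its image in $\GL_n(\C)$ with $\phi$ by the construction of $\Phi_l^\zeta$, ellipticity and triviality on $\SL_2(\C)$ pass from $\phi$ to $\widetilde\phi$. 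The central-character compatibility reduces to the base case $n=1$ together with Theorem \ref{thm:A.1}(a). For the depth bound, Proposition \ref{lem:pi} and Lemma \ref{lem:depthKr} give $d(\phi)=d(\pi)\le r-1$, and Deligne's comparison \eqref{eq:8.8} preserves the ramification filtration of index at most $l$, so $r-1<l$ forces $d(\widetilde\phi)=d(\phi)\le r-1$.

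The core of the argument is the $\varepsilon$-factor identity
\[
\varepsilon(s,\widetilde\pi\times\widetilde\pi',\widetilde\psi)=
\varepsilon(s,\widetilde\phi\otimes\rec_{\widetilde F,n-1}(\widetilde\pi'),\widetilde\psi)
\]
for every generic $\widetilde\pi'\in\Irr(\GL_{n-1}(\widetilde F),\widetilde K_{2r-1,n-1})$. Because $\overline{\zeta_{2r-1}^{\GL_{n-1}(F)}}$ is an equivalence of categories that preserves Whittaker models (the content underlying Theorem \ref{thm:A.3} and \cite[Lemme 3.2.1]{Lem}), any such $\widetilde\pi'$ has the form $\overline{\zeta_{2r-1}^{\GL_{n-1}(F)}}(\pi')$ for a generic $\pi'\in\Irr(\GL_{n-1}(F),K_{2r-1,n-1})$. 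Since $l>2^{n-1}r\ge 2r-1$, Theorem \ref{thm:A.3} applied at level $2r-1$ (permissible because $K_{2r-1}\subset K_r$) gives $\varepsilon(s,\pi\times\pi',\psi)=\varepsilon(s,\widetilde\pi\times\widetilde\pi',\widetilde\psi)$, and the LLC for $F$ rewrites the left side as $\varepsilon(s,\phi\otimes\rec_{F,n-1}(\pi'),\psi)$. The bound $l>2^{n-1}r=2^{n-2}\cdot 2r>2^{n-2}(2r-1)$ permits invoking the inductive hypothesis for $\GL_{n-1}$ at level $2r-1$, giving $\rec_{\widetilde F,n-1}(\widetilde\pi')=\Phi_l^\zeta(\rec_{F,n-1}(\pi'))$; the remaining identification of Galois-side $\varepsilon$-factors under Deligne's isomorphism (with $\psi\leftrightarrow\widetilde\psi$) is a standard consequence of the preservation of the ramification filtration and the $l$-closeness of the additive characters. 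The main obstacle, and the source of the coarse bound $l>2^{n-1}r$, is precisely the doubling $r\mapsto 2r-1$ that Henniart's criterion forces at each inductive step: iterating this $n-1$ times produces the $2^{n-1}r$ threshold.
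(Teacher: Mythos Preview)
Your proof follows the same inductive strategy as the paper: base case via local class field theory, supercuspidal case via Henniart's characterization (Theorem~\ref{thm:LLCr}), and non-supercuspidal case via compatibility with the Zelevinsky machinery. The supercuspidal argument is essentially the paper's, except that you apply Theorem~\ref{thm:LLCr} on the $\widetilde F$-side (to the pair $(\widetilde\pi,\widetilde\phi)$) whereas the paper pulls $\widetilde\phi_l := \rec_{\widetilde F,n}(\widetilde\pi)$ back to the $F$-side and shows $\rec_{F,n}^{-1}\big((\Phi_l^\zeta)^{-1}\widetilde\phi_l\big)\cong\pi$; these are mirror images and both work, though your version needs the (true but not explicitly stated) fact that $\overline{\zeta_{2r-1}^{\GL_{n-1}(F)}}$ reflects genericity.

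There is, however, a gap in your non-supercuspidal reduction. You assert that any non-supercuspidal $\pi$ is a Langlands quotient $L(P,\omega)$ with $M$ a \emph{proper} Levi. This fails exactly for essentially square-integrable, non-supercuspidal $\pi$ (the Steinberg representation and its generalized cousins): their Langlands datum has $M=G$, so your appeal to the inductive hypothesis ``factor by factor on $M$'' does not apply and these representations fall through the cracks between your two cases. Your subsequent mention of supercuspidal support and the Zelevinsky-segment construction shows you have the right repair in mind, but the argument as written does not cover this case. The paper handles it by working directly with Zelevinsky's classification rather than the Langlands datum: it checks separately that $\overline{\zeta_r}$ sends the Steinberg representation of $\GL_n(F)$ to that of $\GL_n(\widetilde F)$ (via its characterization as the unique tempered, essentially square-integrable constituent of the unramified principal series with real infinitesimal central character) and that the associated $\SL_2(\C)$-parameter is obviously preserved by $\Phi_l^\zeta$; it then observes that every step of the passage from supercuspidals to arbitrary irreducibles---supercuspidal parameters, the Steinberg/$\SL_2(\C)$ part, unramified twists (Theorem~\ref{thm:A.1}(a)), and parabolic induction followed by taking the designated quotient (Theorem~\ref{thm:A.1}(c))---is preserved by $\overline{\zeta_r}$.
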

\begin{proof}
The proof will be conducted with induction to $n$. For $n=1$ the diagram becomes
\begin{equation}\label{eq:8.2}
\begin{array}{ccc}
\Irr (F^\times / 1 + \mf p_F^r) & \xrightarrow{\overline{\zeta_r^{F^\times}}} & 
\Irr \big( \widetilde{F}^\times / 1 + \mf p_{\widetilde F}^r \big) \\
\downarrow \mathrm{rec}_{F} & & \downarrow \mathrm{rec}_{\widetilde F} \\
\Irr \big( \mathbf{W}_F / \text{Gal}(F_s / F)^l \big) & \xrightarrow{\Phi^\zeta_l} & 
\Irr \big( \mathbf{W}_{\widetilde F} / \text{Gal}(\widetilde{F}_s / \widetilde{F})^l \big)  
\end{array} ,
\end{equation}
which commutes by Deligne's result \eqref{eq:8.1}. 

Now we fix $n>1$ and we assume the theorem for all $n' < n$. 
Consider a supercuspidal $\pi \in \Irr (\GL_n (F),K_r)$ with Langlands parameter 
$\phi = \mathrm{rec}_{F,n}(\pi) \in \Phi_l (\GL_n (F))$. By the construction of the local
Langlands correspondence for general linear groups, $\SL_2 (\C) \subset \ker \phi$ and
$\phi$ is elliptic. 
By Theorem \ref{thm:A.1} $\overline{\zeta_r^{\GL_n (F)}}(\pi) \in 
\Irr \big( \GL_n (\widetilde{F}), \widetilde{K}_r \big)$ is also supercuspidal and its 
central character is related to that of $\pi$ via \eqref{eq:A.7}.

Let $\widetilde{\phi}_l \in \Phi_l (\GL_n (\widetilde{F}))$ be its Langlands parameter and write 
$\phi_l = (\Phi^\zeta_l )^{-1}(\widetilde{\phi}_l)$. Clearly $\SL_2 (\C) \subset \ker \phi_l$ and 
$\phi_l$ is elliptic, so $\mathrm{rec}_{F,n}^{-1}(\phi_l)$ is supercuspidal. 
The commutative diagram \eqref{eq:8.2} says that $\mathrm{rec}_{F,n}^{-1}(\phi_l)$ has the
same central character as $\pi$. By Theorem \ref{thm:A.1}.e
\[
\vareps(s,\pi,\psi) = \vareps\big( s,\overline{\zeta_r^{\GL_n (F)}}(\pi), \widetilde \psi \big) 
= \vareps \big( s,\widetilde{\phi}_l, \widetilde \psi \big) .
\]
By \cite[Proposition 3.7.1]{Del} the right hand side equals
\[
\vareps \big( s,\widetilde{\phi}_l, \widetilde \psi \big) = \vareps (s,\phi_l,\psi) = 
\vareps (s,\mathrm{rec}_{F,n}^{-1}(\phi_l),\psi) ,
\]
so $\mathrm{rec}_{F,n}^{-1}(\phi_l)$ has the same $\vareps$-factor as $\pi$. Now we consider 
any generic \\ 
$\pi' \in \Irr (\GL_{n-1}(F),K_{2r,n-1})$ with Langlands parameter $\phi'$. The induction 
hypothesis and Theorem \ref{thm:A.3} apply to $\pi'$ because $2^{n-2} 2r < l$. 
By Theorem \ref{thm:A.3}, the induction hypothesis and \cite[Proposition 3.7.1]{Del}:
\begin{equation}
\begin{split}
\vareps (s,\pi \times \pi',\psi) & = \vareps \big( s, \overline{\zeta_{2r}^{\GL_n (F)}}(\pi) 
\times \overline{\zeta_{2r}^{\GL_{n-1}(F)}} (\pi'), \widetilde{\psi} \big) \\
& = \vareps \big( s, \overline{\zeta_r^{\GL_n (F)}}(\pi) \times 
\mathrm{rec}_{\widetilde{F},n-1}^{-1} (\Phi^\zeta_l (\phi')), \widetilde{\psi} \big) \\
& = \vareps \big( s,\widetilde{\phi}_l \otimes \Phi^\zeta_l (\phi'), \widetilde{\psi} \big) \\
& = \vareps (s,\phi_l \otimes \phi',\psi) \; = \; 
\vareps (s,\mathrm{rec}_{F,n}^{-1}(\phi_l) \times \pi',\psi) .
\end{split}
\end{equation}
Together with Theorem \ref{thm:LLCr} this implies $\pi \cong \mathrm{rec}_{F,n}^{-1}(\phi_l)$. 
Hence the diagram of the theorem commutes for supercuspidal $\pi \in \Irr (\GL_n (F),K_r)$.

For non-supercuspidal representations in $\Irr (\GL_n (F),K_r)$ it is easier. As already
discussed in Section \ref{sec:GLn}, the extension of the LLC from supercuspidal 
representations to $\Irr (\GL_n (F))$ is based on the Zelevinsky classification \cite{Zel}.
More precisely, the LLC is determined by:
\begin{itemize}
\item the parameters of supercuspidal representations;
\item the parameter of the Steinberg representation;
\item compatibility with unramified twists; 
\item compatibility with parabolic induction followed by forming Langlands quotients. 
\end{itemize}
The Steinberg representation $\St$ of $\GL_n (F)$ is the only irreducible essentially square-integrable
in the unramified principal series, which is tempered and has a real infinitesimal central
character. By Theorem \ref{thm:A.1} the functor $\overline{\zeta_r^{\GL_n (F)}}$ preserves all
these properties, so it matches the Steinberg representations of $\GL_n (F)$ and 
$\GL_n (\widetilde{F})$. The Langlands parameter of $\St$ is trivial on $\mathbf{W}_F$ and
its restriction to $\SL_2 (\C)$ is the unique irreducible $n$-dimensional representation of that
group. This holds over any local non-archimedean field, so $\Phi_l^\zeta$ matches
the Langlands parameters of the Steinberg representations of $\GL_n (F)$ and $\GL_n (\widetilde{F})$.
By Theorem \ref{thm:A.1} the functor $\overline{\zeta_r^{\GL_n (F)}}$ and its versions for groups
of lower rank respect unramified twists, parabolic induction and Langlands quotients.

To determine the Langlands parameters of elements of $\Irr (\GL_n (F),K_r)$ via the above method,
one needs only representations (possibly of groups of lower rank) that have nonzero 
$K_r$-invariant vectors. We checked that in every step of this method the effect of
$\overline{\zeta_r^{\GL_n (F)}}$ on the Langlands parameters is given by $\Phi^\zeta_l$. Hence
the diagram of the theorem commutes for all representations in $\Irr (\GL_n (F),K_r)$.
\end{proof}

Because the LLC for inner forms of $\GL_n (F)$ is closely related to that for $\GL_n (F)$ itself,
we can generalize Theorem \ref{thm:8.1} to inner forms.

\begin{thm}\label{thm:8.2}
Let $G = \GL_m (D)$ and $\widetilde{G} = \GL_m (\widetilde{D})$, with the same Hasse invariant.
For any $r \in \N$ there exists $l > r$ such that, whenever $F$ and $\widetilde{F}$ are $l$-close,
the following diagram commutes:
\[
\begin{array}{ccc}
\Irr (G,K_r) & \xrightarrow{\overline{\zeta_r^G}} & 
\Irr (\widetilde{G}, \widetilde{K}_r) \\
\downarrow \mathrm{rec}_{D,m} & & \downarrow \mathrm{rec}_{\widetilde{D},m} \\
\Phi_l (G) & \xrightarrow{\Phi^\zeta_l} & \Phi_l (\widetilde {G}) 
\end{array}
\]
In other words, Theorem \ref{thm:8.1} also holds for inner forms of $\GL_n (F)$, but without
an explicit lower bound for $l$.
\end{thm}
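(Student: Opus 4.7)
The plan is to reduce Theorem \ref{thm:8.2} to Theorem \ref{thm:8.1} by exploiting the construction of $\mathrm{rec}_{D,m}$ through the Jacquet--Langlands correspondence, as recalled in Section \ref{sec:GLn}. Recall that $\mathrm{rec}_{D,m}$ is built from $\mathrm{rec}_{F,n}$ in three layers: transport of essentially square-integrable representations via $\JL$, application of $\mathrm{rec}_{F,n_i}$ on each factor of a standard Levi, and assembly by parabolic induction and Langlands quotients. Theorem \ref{thm:A.1} already matches the "geometric" ingredients (parabolic induction, unramified twists, temperedness, essential square-integrability) under $\overline{\zeta_r^G}$, so the content of Theorem \ref{thm:8.2} is exactly that $\overline{\zeta_r^G}$ commutes with $\JL$ and $\mathrm{rec}_{F,n_i}$ simultaneously.

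The crucial first step is to establish that for $l$ large enough (depending on $r$), $\overline{\zeta_r^G}$ intertwines the Jacquet--Langlands correspondences over $F$ and $\widetilde F$: for every essentially square-integrable $\sigma\in\Irr(G,K_r)$,
\[
\JL\bigl(\overline{\zeta_r^G}(\sigma)\bigr) \;=\; \overline{\zeta_r^{\GL_n(F)}}\bigl(\JL(\sigma)\bigr).
\]
This is essentially Badulescu's result in \cite{Bad1}: the Jacquet--Langlands correspondence is characterized by a character identity on regular semisimple elliptic elements of $G$ and $\GL_n(F)$, and $\zeta_r^G$ is set up so that it matches the characters of corresponding $K_r$-biinvariant functions. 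Because Badulescu's proof proceeds via global compatibility and does not come with an explicit quantitative bound, this step is precisely what prevents us from writing down an explicit lower bound on $l$ in the statement.

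Given this, fix $\pi\in\Irr(G,K_r)$ and use the Langlands classification to write $\pi=L(P,\omega)$ with $M=\prod_i\GL_{m_i}(D)^{e_i}$ a standard Levi subgroup of $G$ and $\omega=\prod_i\omega_i^{\otimes e_i}$ with $\omega_i\in\Irr_{\ess L^2}(\GL_{m_i}(D))$ pairwise inequivalent. By Theorem \ref{thm:A.1} parts (a), (b), (c), together with the uniqueness part of the Langlands classification, $\overline{\zeta_r^G}(\pi) = L\bigl(\widetilde P,\overline{\zeta_r^M}(\omega)\bigr)$ and $\overline{\zeta_r^M}(\omega)=\prod_i\overline{\zeta_r^{\GL_{m_i}(D)}}(\omega_i)^{\otimes e_i}$ remains essentially square-integrable on each factor. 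Now apply step~1 to each $\omega_i$, and then apply Theorem \ref{thm:8.1} to each $\JL(\omega_i)\in\Irr(\GL_{n_i}(F),K_r)$, where $n_i=m_i d\le n$; the required $l>2^{n_i-1}r$ from Theorem \ref{thm:8.1} is achieved by taking $l>2^{n-1}r$ together with whatever lower bound step~1 imposes.

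Finally, assemble the pieces: by the construction of $\mathrm{rec}_{D,m}$ recalled around \eqref{eq:5.5}--\eqref{eq:5.4}, one has $\mathrm{rec}_{D,m}(\pi)=\prod_i\mathrm{rec}_{F,n_i}(\JL(\omega_i))^{\otimes e_i}$, and an identical formula over $\widetilde F$. Combining the compatibility of $\JL$ with $\zeta_r$ (step~1), Theorem \ref{thm:8.1} applied factor-by-factor (step~2), and the evident naturality of $\Phi_l^\zeta$ under direct sums of Langlands parameters, we obtain
\[
\mathrm{rec}_{\widetilde D,m}\bigl(\overline{\zeta_r^G}(\pi)\bigr)
\;=\;\Phi_l^\zeta\bigl(\mathrm{rec}_{D,m}(\pi)\bigr),
\]
which is the desired commutativity. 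The main obstacle, as noted above, is the Jacquet--Langlands compatibility in step~1; all other inputs are already available in Theorems \ref{thm:A.1} and \ref{thm:8.1}.
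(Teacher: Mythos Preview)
Your approach is exactly the paper's: reduce to Theorem \ref{thm:8.1} via Langlands classification and Badulescu's compatibility of $\JL$ with the close-fields transfer. There is, however, one genuine imprecision that needs fixing.

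You assert that $\JL(\omega_i)\in\Irr(\GL_{n_i}(F),K_r)$ and then apply $\overline{\zeta_r^{\GL_{n_i}(F)}}$ and Theorem \ref{thm:8.1} with the same level $r$. This is not justified: the Jacquet--Langlands correspondence does not in general send representations with $K_r$-fixed vectors on the $\GL_{m_i}(D)$-side to representations with $K_r$-fixed vectors on the $\GL_{n_i}(F)$-side. What Badulescu's argument \cite[p.~742--744]{Bad1} actually gives is an $r'\geq r$ (depending on $r,m,d$) and an $l>r'$ such that the square
\[
\begin{array}{ccc}
\Irr_{\ess L^2}(\GL_k (D),K_r) & \xrightarrow{\overline{\zeta_r^{\GL_k (D)}}} &
\Irr_{\ess L^2}(\GL_k (\widetilde{D}), \widetilde{K}_r) \\
\downarrow \JL & & \downarrow \JL \\
\Irr_{\ess L^2}(\GL_{kd} (F),K_{r'}) & \xrightarrow{\overline{\zeta_{r'}^{\GL_{kd} (F)}}} &
\Irr_{\ess L^2}(\GL_{kd} (\widetilde{F}), \widetilde{K}_{r'})
\end{array}
\]
commutes for all $k\leq m$. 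One must then enlarge $l$ further so that Theorem \ref{thm:8.1} applies at level $r'$ (i.e.\ $l>2^{n-1}r'$), and the rest of your argument goes through with $r'$ in place of $r$ on the split side. This passage through an uncontrolled $r'$ is the real reason no explicit bound on $l$ is available in Theorem \ref{thm:8.2}; it is not merely that the $\JL$-compatibility itself lacks a bound.
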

\begin{proof}
The bijection \eqref{eq:5.2} shows that we can write any $\pi \in \Irr (G,K_r)$ as the
Langlands quotient $L(P,\omega)$ of $I_P^G (\omega)$, where $P$ is a standard parabolic subgroup,
$M$ is Levi factor of $P$ and $\omega \in \Irr_{\ess L^2}(M)$. Moreover we may assume that
$M = \prod_j \GL_{m_j}(D)$ and $\omega = \otimes_j \omega_j$. The fact that $\pi$ has nonzero 
$K_r$-invariant vectors implies $\omega_j \in \Irr (\GL_{m_j}(D),K_r)$. 
By construction \eqref{eq:5.5}
\begin{equation}\label{eq:8.4}
\mathrm{rec}_{D,m}(\pi) = \prod\nolimits_j \mathrm{rec}_{D,m_j}(\omega_j) = 
\prod\nolimits_j \mathrm{rec}_{F,d m_j} (\JL (\omega_j)) .
\end{equation}
The right hand side forces us to compare the Jacquet--Langlands correspondence with the method
of close fields. In fact, this is how Badulescu proved this correspondence over local fields
of positive characteristic. It follows from \cite[p. 742--744]{Bad1} that there exist $l > r' \geq r$
such that, whenever $F$ and $\widetilde{F}$ are $l$-close, the following diagram commutes
for all $k \leq m$:
\begin{equation}\label{eq:8.5}
\begin{array}{ccc}
\Irr_{\ess L^2}(\GL_k (D),K_r) & \xrightarrow{\overline{\zeta_r^{\GL_k (D)}}} & 
\Irr (\GL_k (\widetilde{D}), \widetilde{K}_r) \\
\downarrow \JL & & \downarrow \JL \\
\Irr_{\ess L^2}(\GL_{kd} (F),K_{r'}) & \xrightarrow{\overline{\zeta_r^{\GL_{kd} (F)}}} & 
\Irr (\GL_{kd} (\widetilde{F}), \widetilde{K}_{r'}) 
\end{array}
\end{equation}
Enlarge $l$ so that Theorem \ref{thm:8.1} applies to $\Irr (\GL_{kd}(F),K_{r'})$ for all $k \leq m$.
By Theorem \ref{thm:A.1}.c
\[
\overline{\zeta_r^G}(\pi) = L \big( \widetilde{P},\overline{\zeta_r^M}(\omega) \big) = 
L \big( \widetilde{P},\otimes_j \overline{\zeta_r^{\GL_{m_j}(D)}}(\omega_j) \big) . 
\]
Now \eqref{eq:8.5} shows that 
\begin{equation}\label{eq:8.6}
\JL \big( \overline{\zeta_r^M}(\omega) \big) = 
\otimes_j \JL \big( \overline{\zeta_r^{\GL_{m_j}(D)}}(\omega_j) \big) =
\otimes_j \overline{\zeta_{r'}^{\GL_{d m_j}(F)}}(\JL (\omega_j)) .
\end{equation}
By \eqref{eq:5.5} and Theorem \ref{thm:8.1} 
\[
\mathrm{rec}_{\widetilde{D},m} \big( \overline{\zeta_r^G}(\pi) \big) = \prod\nolimits_j 
\mathrm{rec}_{\widetilde{F},d m_j} \big( \overline{\zeta_{r'}^{\GL_{d m_j}(F)}}(\JL (\omega_j)) \big)
= \prod\nolimits_j \Phi^\zeta_l \big( \mathrm{rec}_{F,d m_j} (\JL (\omega_j)) \big) . 
\]
Comparing this with \eqref{eq:8.4} concludes the proof.
\end{proof}

Now we are ready to complete the proof of Theorem \ref{thm:6.3}, and hence of our main
result Theorem \ref{thm:6.5}.\\

\noindent
\emph{Proof of Theorem \ref{thm:6.3} when $\char(F) = p > 0$.}\\
Choose $r \in \N$ such that $\Pi_\phi (G) \in \Irr (G,K_r)$ and choose $l \in \N$ such
that Theorem \ref{thm:8.2} applies. Find a $p$-adic field $\widetilde{F}$ which is
$l$-close to $F$, fix a representative for $\phi$ and define $\widetilde{\phi}$ as the
map $\mathbf{W}_F \times \SL_2 (\C) \to \GL_n (\C)$ obtained from $\phi$ via \eqref{eq:8.9}.
Thus $\widetilde{\phi}$ is a particular representative for $\Phi_l^\zeta (\phi) \in \Phi_l (G)$.
By Theorem \ref{thm:8.2} $\Pi_{\widetilde \phi}(\widetilde G) = \overline{\zeta_r^G} 
(\Pi_\phi (G))$ and by Theorem \ref{thm:A.1}
\[
\mathrm{End}_{\widetilde G} \big( \Pi_{\widetilde \phi}(\widetilde G) \big) \cong
\mathrm{End}_G \big( \Pi_\phi (G) \big) .
\]
Let $\phi^\sharp \in \Phi (G^\sharp)$ and 
$\widetilde{\phi}^\sharp \in \Phi (\widetilde{G}^\sharp)$
be the Langlands parameters obtained from $\phi$ and $\widetilde \phi$ via the quotient
map $\GL_n (\C) \to \PGL_n (\C)$. By construction $\phi^\sharp$ and $\widetilde{\phi}^\sharp$ 
have the same image in $\PGL_n (\C)$, so
\begin{equation}\label{eq:8.7}
\mc S_{\widetilde{\phi}^\sharp} = \mc S_{\phi^\sharp} \quad \text{and} \quad
\mc Z_{\widetilde{\phi}^\sharp} = \mc Z_{\phi^\sharp} .
\end{equation}
With \eqref{eq:6.4} this provides natural isomorphisms
\[
X^G (\Pi_\phi (G)) \cong \mc S_{\phi^\sharp} / \mc Z_{\phi^\sharp} =
\mc S_{\widetilde{\phi}^\sharp} / \mc Z_{\widetilde{\phi}^\sharp} \cong
X^{\widetilde G} (\Pi_{\widetilde \phi} (\widetilde{G})) .
\]
In view of \eqref{eq:8.2}, the composite isomorphism 
$X^{\widetilde G} (\Pi_{\widetilde \phi} (\widetilde{G})) \cong X^G (\Pi_\phi (G))$ comes from
$F^\times / 1 + \mf p_F^r \cong \widetilde{F}^\times / 1 + \mf p_{\widetilde F}^r$. For 
$\widetilde{\gamma} \in X^{\widetilde G} (\Pi_{\widetilde \phi} (\widetilde{G}))$, choose
\[
I_{\widetilde \gamma} \in \mathrm{Hom}_{\widetilde G} \big( \Pi_{\widetilde \phi}(\widetilde{G}),
\Pi_{\widetilde \phi}(\widetilde{G}) \otimes \widetilde{\gamma} \big)
\] 
as in \cite[\S 12]{HiSa}. Then Theorem \ref{thm:A.1} yields intertwining operators \\
$I_\gamma \in \mathrm{Hom}_{G} \big( \Pi_{\phi}(G),\Pi_{\phi}(G) \otimes \gamma \big)$. 
Consequently
\begin{equation}
\kappa_{\phi^\sharp} (\gamma,\gamma') = I_\gamma I_{\gamma'} I_{\gamma \gamma'}^{-1} =
I_{\widetilde \gamma} I_{\widetilde{\gamma}'} I_{\widetilde{\gamma} \widetilde{\gamma}'}^{-1}
= \kappa_{\widetilde{\phi}^\sharp} (\widetilde{\gamma},\widetilde{\gamma}') .
\end{equation}
Because we already proved Theorem \ref{thm:6.3} for $\widetilde F$, this gives 
\begin{equation}
\C [\mc S_{\phi^\sharp} / \mc Z_{\phi^\sharp} ,\kappa_{\phi^\sharp} ] = 
\C [\mc S_{\widetilde{\phi}^\sharp} / \mc Z_{\widetilde{\phi}^\sharp}, 
\kappa_{\widetilde{\phi}^\sharp}] \cong e_{\chi_{\widetilde G}} \C[\mc S_{\phi^\sharp}]
= e_{\chi_G} \C [\mc S_{\phi^\sharp}] .
\end{equation}
That the isomorphism $\C [\mc S_{\phi^\sharp} / \mc Z_{\phi^\sharp} ,\kappa_{\phi^\sharp} ]
\cong e_{\chi_G} \C [\mc S_{\phi^\sharp}]$ is of the required form and that it is unique up
to twists by characters of $\mc S_{\phi^\sharp} / \mc Z_{\phi^\sharp}$ follows from the
corresponding statements over $\widetilde F$ and \eqref{eq:8.7}. $\qquad \Box$

\end{document}